\providecommand\@enum@widestlabel{7}
\newtheorem{lemma}{Lemma}[section]
\newtheorem{theorem}[lemma]{Theorem}
\newtheorem{corollary}[lemma]{Corollary}
\newtheorem{proposition}[lemma]{Proposition}
\newtheorem{conjecture}[lemma]{Conjecture}
\theoremstyle{plain}
\theoremstyle{definition}
\newtheorem{remark}[lemma]{Remark}
\newtheorem{example}[lemma]{Example}
\renewcommand{\theequation}%
{\arabic{section}.\arabic{lemma}.\arabic{equation}}
\newcommand{\CC}{\ensuremath{\mathbb{C}}} 
\newcommand{\NN}{\ensuremath{\mathbb{N}}} 
\newcommand{\PP}{\ensuremath{\mathbb{P}}} 
\newcommand{\QQ}{\ensuremath{\mathbb{Q}}} 
\newcommand{\RR}{\ensuremath{\mathbb{R}}} 
\newcommand{\sI}{\ensuremath{\kern -1pt \mathscr{I}\kern -2pt}} 
\newcommand{\sJ}{\ensuremath{\kern -2pt \mathscr{J}\kern -2pt}} 
\newcommand{\sO}{\ensuremath{\mathscr{O}}} 
\newcommand{\sK}{\ensuremath{\mathscr{K}}}
\renewcommand{\geq}{\geqslant}
\renewcommand{\leq}{\leqslant}
\DeclareMathOperator{\mult}{mult}
\DeclareMathOperator{\Sym}{Sym}
\DeclareMathOperator{\ord}{ord}
\DeclareMathOperator{\Null}{Null}
\DeclareMathOperator{\Neg}{Neg}
\DeclareMathOperator{\vol}{vol}
\DeclareMathOperator{\length}{length}
\DeclareMathOperator{\pr}{pr}
\DeclareMathOperator{\Area}{Area}
\DeclareMathOperator{\dist}{dist}
\DeclareMathOperator{\intt}{interior}
\newcommand{\equ}{\ensuremath{\,=\,}}
\newcommand{\dgeq}{\ensuremath{\,\geq\,}}
\newcommand{\dleq}{\ensuremath{\, \leq\, }}
\newcommand{\deq}{\ensuremath{\stackrel{\textrm{def}}{=}}}
\newcommand{\dsubseteq}{\ensuremath{\,\subseteq\,}}
\newcommand{\st}[1]{\ensuremath{\left\{ #1 \right\}   }}
\newcommand{\zj}[1]{\ensuremath{\left( #1 \right)}}
\newcommand{\dsupseteq}{\ensuremath{\,\supseteq\,}}
\newcommand{\lra}{\ensuremath{\longrightarrow}}
\newcommand{\og}{\ensuremath{\overline{\Gamma}}}         
\newcommand{\e}{\ensuremath{\epsilon}}
\newcommand{\HH}[3]{\ensuremath{H^{#1}\left(#2,#3\right)}}
\newcommand{\iss}[1]{\ensuremath{\Delta^{-1}_{#1}}}
\newcommand{\eone}{\ensuremath{\textup{\textbf{e}}_1}}
\begin{document}

\title[Reider-type theorem for higher syzygies]{A Reider-type theorem for higher syzygies on abelian surfaces}

\author[A.~K\" uronya]{Alex K\" uronya}
\author[V.~Lozovanu]{Victor Lozovanu}

\address{Alex K\"uronya, Johann-Wolfgang-Goethe Universit\"at Frankfurt, Institut f\"ur Mathematik, Robert-Mayer-Stra\ss e 6-10., D-60325
Frankfurt am Main, Germany}
\address{Budapest University of Technology and Economics, Department of Algebra, Egry J\'ozsef u. 1., H-1111 Budapest, Hungary}
\email{{\tt kuronya@math.uni-frankfurt.de}}

\address{Victor Lozovanu, Universit\'a degli Studi di Milano--Bicocca, Dipartimento di Matematica e Applicazioni, 
Via Cozzi 53,, I-20125 Milano, Italy}
\email{{\tt victor.lozovanu@unimib.it}}
\email{{\tt victor.lozovanu@gmail.com}}

\begin{abstract}
Building on the theory of infinitesimal Newton--Okounkov bodies and previous work of  Lazarsfeld--Pareschi--Popa, we present a Reider-type theorem for higher syzygies of ample line bundles on abelian surfaces. 
As an application of our methods we  confirm a conjecture of Gross and Popescu on abelian surfaces with a very ample primitive polarization of type $(1,d)$, whenever $d\geq 23$. 
\end{abstract}

\maketitle

\section{Introduction}

\subsection{Background and motivation}
A very effective way to study  varieties is through their embeddings into projective space. For a projective variety $X$, a very ample line bundle $L$ over $X$ gives rise 
to an embedding
\[
\phi_L \colon X \ \hookrightarrow \ \PP^N \equ \PP(H^0(X,L)) \ . 
\]
The homogeneous coordinate ring of the image, defined as $R(X,L)=\oplus_{m\in\NN} H^0(X,L^{\otimes m})$, is  the main algebraic invariant associated to the pair $(X,L)$. 
Quite naturally, the algebraic properties of $R(X,L)$  reveal a lot of information about the geometric picture it is attached to, and it has long been an object of 
central significance. 

The algebraic behaviour of $R(X,L)$ is best studied in the category of graded modules over the ring $S=\CC[x_0,\ldots ,x_N]=\Sym(H^0(X,L))$.  As an  $S$-module, 
$R(X,L)$ admits a minimal graded free resolution $E_{\bullet}$
\[
\ldots \rightarrow E_i \rightarrow E_{i-1}\rightarrow \ldots \rightarrow E_2\rightarrow E_1\rightarrow E_0\rightarrow R_l \rightarrow 0 \ ,
\]
where each $E_i=\oplus_{j}S(-a_{i,j})$ is a free graded $S$-module. Beside its  geometric relevance, studying the set of numbers $(a_{i,j})$  is of paramount interest 
in algebra, combinatorics, and  related areas.

Following the footsteps  of Castelnuovo  and later  Mumford \cite{Mumford}, Green and Lazarsfeld \cite{GL}  introduced a sequence of properties asking  
that the first $p$ terms in $E_{\bullet}$  be as simple as possible. More specifically, one says that the line bundle $L$ \emph{satisfies property} $(N_p)$ if $E_0=S$ and 
\[
a_{i,j} \equ i+1 \ \ \ \ \text{for all $j$ and all $1\leq i\leq p$.}
\]
As an illustration, translated to geometric terms, property $(N_0)$ holds for $L$ if and only if $\phi_L$ defines a projectively normal embedding, while  property $(N_1)$ 
is equivalent to requiring $(N_0)$  and that the homogeneous ideal $\sI_{X|\PP^N}$ of $X$ in $\PP^N$ be  generated by quadrics. 
Historically, property $(N_0)$ on curves was first studied by Castelnuovo. Many years later  Mumford completed the picture in the curve case for  $(N_0)$ and $(N_1)$. 
 
Due to its classical roots and its relevance for projective geometry, the area surrounding  property  $(N_p)$ has generated a significant amount of  
work in the last thirty years or so, with some of the highlights being \cites{BEL, EL1, EL2,P,Voisin}. Controlling higher syzygies 
has always been  a notoriously difficult question. For more details about this circle of ideas, the reader is kindly referred  to \cite{PAGI}*{Section 1.8.D}  and \cite{E}. 

In analogy to Fujita's influential conjectures on global generation and very ampleness, Mukai formulated a series of conjectures for properties $(N_p)$. For an ample divisor $L$ on a smooth projective variety $X$, Mukai asks if property $(N_p)$ holds for $K_X+mL$ whenever $m\geq \dim X+p+2$.
From this perspective the case of curves had been well understood by Green \cite{G}. Although non-trivial work has been done in dimension two (see \cite{GP}
for a nice overview), the conjecture in its entirety currently appears  to be  out of reach even for surfaces. 

The only class of varieties where Fujita-Mukai conjecture has been successfully treated  is that of abelian varieties. Based on earlier work of Kempf, 
Pareschi \cite{P} proved a conjecture of Lazarsfeld claiming that for an ample line bundle $L$ on an abelian variety $X$, the line bundle  $L^{\otimes (p+3)}$ satisfies 
property $(N_p)$. Note  that the Fujita--Mukai conjectures 'do not scale well' as seen by replacing $L$ by a large multiple. Koll\'ar's suggestion 
\cite{SingsPairs} to  control positivity in terms of intersection numbers seems closer to the truth. 

Motivated by  Koll\'ar's line of thought, it is  natural to ask whether it is feasible to  study property $(N_p)$ for a given line bundle with certain numerics instead. 
Such  a new line of attack in the case of abelian varieties has been recently initiated by Hwang--To \cite{HT} where complex analytic techniques 
(more precisely upper bounds on volumes of tubular neighbourhoods of subtori of abelian varieties) were  used  to control projective normality of line bundles on 
abelian varieties in terms of Seshadri constants. 

Shortly after, Lazarsfeld--Pareschi--Popa \cite{LPP}  used multiplier ideal methods to extend the results of \cite{HT}  to  higher syzygies on abelian varieties. 
The essential contribution of \cite{LPP} is that property $(N_p)$ can be guaranteed via constructing effective divisors with  prescribed multiplier 
ideals and numerics. Our approach builds in part  on the method of proof developed in  \cite{LPP}.

\subsection{Description of the  main result}
The goal of this article is to study property $(N_p)$ for line bundles on abelian surfaces over the complex numbers. Based on ideas coming  from algebraic and differential 
geometry aided by  the convex geometry of polygons in the plane, we are able to prove the following theorem. 

\begin{theorem}\label{thm:np}
Let $p\geq 0$ be a natural number, $X$  a complex abelian surface,  and $L$ an ample line bundle on $X$ with $(L^2)\geq 5(p+2)^2$. 
Then the following are equivalent. 
\begin{enumerate}
\item $X$ does not contain an elliptic curve $C$ with $(C^2)=0$ and $1\leq (L\cdot C) \leq p+2$.
\item The line bundle $L$ satisfies property $(N_p)$.
\end{enumerate} 
\end{theorem}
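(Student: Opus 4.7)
The equivalence splits into two implications of unequal difficulty.

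For (2)$\Rightarrow$(1), the argument is by contradiction via restriction. If an elliptic curve $C\subset X$ with $(C^2)=0$ and $1\leq L\cdot C\leq p+2$ exists, use Mumford's vanishing for ample line bundles on abelian varieties together with the ideal-sheaf sequence to show that the natural restriction maps $H^0(X,L^{\otimes m})\to H^0(C,L|_C^{\otimes m})$ are surjective for all $m\geq 1$. This surjectivity descends the failure of $(N_p)$ on $C$ to a failure on $X$. On the elliptic curve $C$, Green's theorem on syzygies for curves says that $(N_p)$ forces $\deg(L|_C)\geq 2g(C)+1+p=p+3$, contradicting the hypothesis.

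For the substantive direction (1)$\Rightarrow$(2), the plan is to combine the Lazarsfeld--Pareschi--Popa multiplier-ideal criterion with infinitesimal Newton--Okounkov body techniques. The LPP criterion reduces $(N_p)$ to the following task: for every general length-$(p+2)$ subscheme $\xi\subset X$, produce an effective $\QQ$-divisor $D_\xi\equiv cL$ with $c<1$ whose multiplier ideal satisfies $\sJ(X,D_\xi)=\sI_\xi$ locally around $\xi$. The group structure of $X$ lets one concentrate $\xi$ at a chosen origin $0\in X$ and move it by translation. Pass to the blow-up $\pi\colon X'\to X$ at $0$ with exceptional divisor $E$, fix a flag $(E,q)$ for a point $q\in E$, and consider the infinitesimal Newton--Okounkov polygon $\Delta_{(E,q)}(\pi^*L)\subset\RR^2$. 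By a principle established in the authors' earlier work, producing the required $\QQ$-divisor reduces to exhibiting an inverted standard simplex of base length $p+2$ inside this polygon. The hypothesis $(L^2)\geq 5(p+2)^2$ is calibrated to supply the area needed for such a simplex to fit.

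The crux of the argument, and the place where hypothesis (1) enters, is the convex-geometric analysis of $\Delta_{(E,q)}(\pi^*L)$. Its shape is dictated by the Zariski decompositions of the $\RR$-divisors $\pi^*L-tE$, and the only failure modes compatible with the volume bound are elongated polygons caused by curves $C\subset X$ with $L\cdot C$ small relative to $(C^2)$. On an abelian surface, triviality of $K_X$ together with Hodge index forces any such obstructing curve to be a smooth elliptic curve with $(C^2)=0$ and $L\cdot C\leq p+2$, which is precisely what (1) excludes; absent such curves, the polygon contains the required simplex and the construction succeeds. The main obstacle I foresee is the sharp calibration of the convex-geometric inequalities so that the threshold $5(p+2)^2$ matches exactly the obstruction list in (1), with no gap in between. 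This will require a careful bookkeeping of the Zariski decomposition and the piecewise-linear boundary of $\Delta_{(E,q)}(\pi^*L)$ on an abelian surface of arbitrary Picard number, and is the technical heart of the argument.
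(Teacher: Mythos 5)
Your broad strategy for $(1)\Rightarrow(2)$ matches the paper's in spirit, but there are two genuine gaps worth flagging; and your $(2)\Rightarrow(1)$ sketch is underdeveloped at the key step.

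\textbf{On $(1)\Rightarrow(2)$.} First, you have misstated the Lazarsfeld--Pareschi--Popa criterion. It does not ask for a multiplier ideal equal to $\sI_\xi$ for a general length-$(p+2)$ subscheme $\xi$; it asks for a single effective $\QQ$-divisor $F_0 \equiv \frac{1-c}{p+2}L$ whose multiplier ideal equals the ideal $\sI_o$ of the origin as a sheaf on all of $X$. The dependence on $p$ enters only through the factor $\frac{1}{p+2}$ in the numerics (the construction then goes through the difference morphism on $X^{\times(p+2)}$), not through the length of the scheme at which one imposes singularities. Second, and more seriously, you ask for $\sJ(X,D)=\sI_\xi$ \emph{locally} around $\xi$, but for $p\geq 1$ local control is not enough: the LPP argument needs $\sJ(X,F_0)=\sI_o$ globally, so $\sJ$ must be trivial at every other point of $X$. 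The paper devotes a separate statement (Theorem~\ref{thm:nopolygonabelian}, using Lemma~\ref{lem:two points} about negative curves through two points on an abelian surface) to upgrading the local construction to a global one; this upgrade is specific to abelian surfaces and is not automatic. Only in the $p=0$ case does a homological trick reduce the global requirement to a local one. Finally, the convex-geometric criterion is not literally ``an inverted standard simplex of base length $p+2$'': that would be the condition $\epsilon(L;o)\geq p+2$, which is strictly stronger than what the paper needs. The actual criterion (Theorem~\ref{thm:nopolygon}) is that $\Delta_{(E,z)}(\pi^*(L/(p+2)))$ meets the interior of the unbounded region $\{t\geq 2,\ 0\leq y\leq t/2\}$, equivalently that the slice of the polygon at $t=2$ has length $>1$; this can hold even when the Seshadri constant of $L/(p+2)$ is below $2$.

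\textbf{On $(2)\Rightarrow(1)$.} The jump from ``the restriction maps $H^0(X,L^{\otimes m})\to H^0(C,L|_C^{\otimes m})$ are all surjective'' to ``failure of $(N_p)$ on $C$ produces a failure on $X$'' is exactly the nontrivial step and cannot be waved through. Surjectivity of restriction maps is the right starting point, but by itself it does not transfer syzygetic information; one needs the restriction-of-syzygies formalism of Gruson--Lazarsfeld--Peskine and Eisenbud--Green--Hulek--Popescu. The paper restricts the minimal free resolution of $\sI_{X|\PP^N}$ to a linear subspace $\Lambda\cong\PP^{p+1}$ spanned by the image of $C$, applies the GLP lemma on complexes exact away from a one-dimensional set to obtain $H^m(\Lambda,\sI_{X\cap\Lambda|\Lambda})=0$ for $m\geq 2$, and from this deduces $H^1(C,\sO_C)=0$, contradicting that $C$ is elliptic. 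Note also that the contradiction is extracted directly as a cohomology vanishing, rather than by invoking Green's $2g+1+p$ theorem as you propose, so that one does not have to separately establish that $L|_C$ inherits $(N_p)$. There is an extra subtlety too: to guarantee that $X\cap\Lambda=C$ scheme-theoretically (rather than $C$ plus extra junk), the paper must verify that $|L-C|$ is base-point free, which for $p\geq 1$ uses Reider's theorem and the numerical hypotheses; this step is absent from your outline.
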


The sequence of properties $(N_p)$ is best considered as increasingly stronger algebraic versions of positivity for line bundles along the lines of global generation
and very ampleness. From this point of view, Theorem~\ref{thm:np} is a natural generalization of Reider's celebrated result \cite{R}. 

More concretely, in the case of an ample line bundle on an abelian surface Reider's theorem says: if $(L^2)\geq 5$ then $L$ is globally generated 
if and only if there is no elliptic curve $C\subseteq X$ with $(C^2)=0$ and $(L\cdot C)=1$. Similarly,  if $(L^2)\geq 10$, then $L$ is  very ample 
exactly if $X$ does not contain an elliptic curve $C$ with $(C^2)=0$ and $1\leq (L\cdot C)\leq 2$. It is worth mentioning here that for a general abelian surface $X$, 
a principal polarization $L$ of type $(1,6)$ for instance  is very ample by Reider's theorem but  the embedding it defines is not projectively normal 
(see Remark~\ref{rem:veryample vs normality}). Hence the small discrepancy between Reider's theorem and Theorem~\ref{thm:np} in the case of $p=0$, 
i.e. projectively normality.

In comparison with  the proofs of Reider's and Pareschi's theorem,  both relying heavily on vector bundle techniques, our approach in confirming property
$(N_p)$ relies on  multiplier ideals and  the associated vanishing theorems together with the theory of infinitesimal Newton--Okounkov polygons  developed in \cite{KL}. 
The essential novelty of our work is the use of  infinitesimal Newton--Okounkov polygons to construct effective $\QQ$-divisors  
whose multiplier ideal coincides with  the maximal ideal of the origin.

The strategy  of using  effective divisors with prescribed singularities at a given point dates back to the work of Demailly in the early 1990's,  and it has proven 
to be crucial in many groundbreaking results. To name a notable example,  the work of Anghern and Siu on the Fujita conjecture relies heavily on this idea. 

On the other hand, when the given point is taken to be very general,  it is known that the  line bundle in question will have very strong local positivity properties. 
For example, Ein, K\"uchle, and Lazarsfeld (see \cite{EL} and \cite{EKL}) proved that the Seshadri constant of an ample line bundle at a very general point is quite large. 
Based on earlier  work of Nakamaye, we have translated these ideas to the language of infinitesimal Newton--Okounkov bodies in the case of surfaces (cf. \cite{KL}). 
Recall that as far as local properties go, the origin of an abelian surface behaves like a very generic point. 

The above principles lead to the following criterion for finding effective $\QQ$-divisors with prescribed singularities 
by making use of the Euclidean geometry of infinitesimal Newton--Okounkov polygons; this is the main ingredient of  the proof of the implication $(1)\Rightarrow (2)$ 
of Theorem~\ref{thm:np}.

\begin{theorem}(Theorem~\ref{thm:verygeneric} and Corollary~\ref{cor:very general})
Let $p\geq 0$ be a positive integer, $X$ a smooth projective surface and $L$ an ample line bundle on $X$ with $(L^2)\geq 5(p+2)^2$. Let $x\in X$ be a very general point so that there is no irreducible curve $C\subseteq X$ smooth at $x$ with $1\leq (L\cdot C)\leq p+2$. Then there exists an effective $\QQ$-divisor 
\[
D \ \equiv \ \frac{(1-c)}{p+2}L, \textup{ for some } 0<c\ll 1
\]
such  that $\sJ(X;D)=\sI_{X,x}$ in a neighborhood of the point $x$, where $\sJ(X;D)$ denotes the multiplier ideal of $D$.
\end{theorem}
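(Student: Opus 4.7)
My plan is to work on the blowup $\pi\colon X'\to X$ of $X$ at $x$, with exceptional divisor $E$, and to build $D$ by rescaling an auxiliary effective divisor $D_0\equiv L$ with carefully chosen multiplicity at $x$. The goal will be to arrange that $\pi$ itself is a log resolution of $(X,D)$, with the coefficient of $\pi^{\ast}D$ along $E$ lying in $[2,3)$ and all other coefficients in $(0,1)$, so that the multiplier ideal collapses to $\pi_{\ast}\sO_{X'}(-E)=\sI_{X,x}$.

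\textbf{Step 1.} Using the very general position of $x$ together with the absence of irreducible curves $C$ smooth at $x$ with $1\leq(L\cdot C)\leq p+2$, I would invoke the machinery of infinitesimal Newton--Okounkov polygons from \cite{KL} to control the shape of $\Delta_{(E,F)}(\pi^{\ast}L)$ for a general $F\in E$. Combined with the area estimate $(L^2)\geq 5(p+2)^2$, this would show that $\pi^{\ast}L-\mu E$ is big on $X'$ for some real number
\[
\mu\ \in\ \Bigl(\tfrac{2(p+2)}{1-c},\ \tfrac{3(p+2)}{1-c}\Bigr),\qquad \text{for a sufficiently small } c>0.
\]

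\textbf{Step 2.} Bigness of $\pi^{\ast}L-\mu E$ yields, for $m$ large and divisible, sections of $\sO_X(mL)$ vanishing to order at least $m\mu$ at $x$. A sufficiently general such section vanishes to order exactly $m\mu$ at $x$ with smooth strict transform on $X'$ meeting $E$ transversally at $\mu$ distinct points. Rescaling by $1/m$, we obtain an effective $\QQ$-divisor $D_0\equiv L$ with $\mathrm{mult}_x D_0=\mu$ and with the desired transversality.

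\textbf{Step 3.} Setting $D\deq \tfrac{1-c}{p+2}D_0$, we have $D\equiv \tfrac{1-c}{p+2}L$ and
\[
\pi^{\ast}D\equ \tfrac{1-c}{p+2}\widetilde{D_0}\ +\ \tfrac{(1-c)\mu}{p+2}E ,
\]
with $\tfrac{(1-c)\mu}{p+2}\in[2,3)$ by the choice of $\mu$ and $\tfrac{1-c}{p+2}\in(0,1)$ since $p+2\geq 2$. Since $\widetilde{D_0}+E$ is simple normal crossing, $\pi$ is already a log resolution of $(X,D)$, and using $K_{X'/X}=E$ the multiplier ideal formula gives
\[
\sJ(X;D)\equ\pi_{\ast}\sO_{X'}\bigl(K_{X'/X}-\lfloor\pi^{\ast}D\rfloor\bigr)\equ\pi_{\ast}\sO_{X'}(-E)\equ\sI_{X,x}
\]
in a neighborhood of $x$, as required.

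\textbf{Main obstacle.} The heart of the proof is Step 1. The Seshadri-type bound coming from the slope-$(p+2)$ curve hypothesis only guarantees effective divisors with multiplicity around $p+2$ at $x$, whereas the rescaling in Step 3 demands multiplicity near $2(p+2)$. The constant $5$ in $(L^2)\geq 5(p+2)^2$ is calibrated precisely so that the extra area in the infinitesimal Newton--Okounkov polygon (beyond that needed for a Seshadri-type triangle of side $p+2$) forces the polygon to accommodate points with first coordinate past $2(p+2)$, and simultaneously so that the very generality of $x$ suffices to rule out curves of slope in the gap $\bigl(p+2,\,2(p+2)\bigr)$ which would otherwise obstruct the required bigness.
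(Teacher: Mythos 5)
Your Step 2 contains the essential gap. You assert that, because $\pi^{\ast}L-\mu E$ is big, a \emph{general} section of $\sO_X(mL)$ vanishing to order at least $m\mu$ at $x$ has multiplicity exactly $m\mu$ and, crucially, a \emph{smooth} strict transform on $X'$ meeting $E$ transversally. This is not true in general, and it is precisely the difficulty that the whole multiplier-ideal machinery is designed to circumvent. A general member of the linear series $|m\pi^{\ast}L-m\mu E|$ contains the base locus of that system, and when $\mu$ is a sizable fraction of the bigness threshold $\mu'(L;x)$ (you need $\mu$ roughly $2(p+2)$, while $\mu'\geq\sqrt{(L^2)}\approx\sqrt{5}\,(p+2)$, so $\mu$ is well past the nef threshold $\epsilon(L;x)$), the base locus is typically a nontrivial effective divisor — the negative part of the Zariski decomposition of $\pi^{\ast}L-\mu E$ — and the strict transform of a general section will be singular along it, possibly at points of $E$. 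Bertini-type theorems only give smoothness away from the base locus, and your Step 2 implicitly needs the base locus to be empty or at least disjoint from $E$, which is exactly what you have no control over. Consequently the SNC hypothesis in Step 3 is unjustified, and the computation $\sJ(X;D)=\pi_{\ast}\sO_{X'}(-E)$ does not follow.

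The paper avoids this trap by \emph{not} trying to make $\pi$ a log resolution. It instead writes the Zariski decomposition $\pi^{\ast}((1-c)B)-2E=P+N$ on the blow-up (with $B=\tfrac{1}{p+2}L$), builds $D'=P'+N+2E$ with $P'\equiv P$ a suitable Koll\'ar–Bertini perturbation of the nef part, and then reduces the problem to controlling $\sJ(X',N)$ along $E$: the Newton–Okounkov polygon hypothesis (length of the fiber over $t=2$ exceeding $1$) is used via \cite{LM}*{Theorem 6.4} to show $\ord_z(N)<1$ for every $z\in E$, forcing $\sJ(X',N)$ to be trivial in a neighborhood of $E$. This is the content of Theorem~\ref{thm:nopolygon}(i) and is the step that your proposal is missing. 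Your Step 1 (area estimates plus absence of low-slope curves forcing the polygon to extend past $t=2$) is in fact very close in spirit to the paper's Theorem~\ref{thm:verygeneric}, and the heuristic in your ``Main obstacle'' paragraph correctly identifies the role of the constant $5$; but the passage from Newton–Okounkov information to the multiplier-ideal conclusion cannot go via a single general divisor with smooth strict transform — it must go through the negative part of the Zariski decomposition, whose singularities along $E$ are what actually have to be bounded.
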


The implication $(2)\Rightarrow (1)$ of Theorem~\ref{thm:np} on the other hand is achieved  by the  method of restricting syzygies introduced in  \cite{GLP} 
and developed further by the authors of  \cite{EGHP}. More precisely, the  strategy  is that  property $(N_p)$ for the line bundle $L$ implies vanishing of 
certain higher cohomology groups of  the ideal sheaf of the scheme-theoretical  intersection $X\cap \Lambda$, where $\Lambda$ is a plane of small dimension 
inside the projective space  $\PP(\HH{0}{X}{L})$. 

The main result of \cite{LPP} in dimension two follows from our Theorem~\ref{thm:np}. Part of the added value of our work lies in treating  the cases where $(L^2)$ is large, but 
$\epsilon(L;o)$ is small; as seen in \cite{BS} (cf.  Example~\ref{eg:Koszul}), such line bundles abound. Along the same lines, we give a criterion for the section ring 
$R(X,L)$ of an ample line bundle $L$ on an abelian surface to be  Koszul (see Corollary~\ref{cor:Koszul}).

\subsection{Consequences}
Before giving a sketch of the proof of Theorem~\ref{thm:np},  we  record here a couple of interesting applications. Perhaps the most significant one is the following positive answer to a conjecture of Gross and Popescu \cite{GrP} regarding the generating degrees  of the ideal sheaf of an embedded abelian surface with a very ample primitive polarization of type $(1,d)$, proven in detail in Section 5. 

\begin{theorem}(Theorem~\ref{thm:grosspopescu})
Let $X$ be an abelian surface, $L$ a very ample line bundle  of type $(1,d)$ on $X$ for some $d\geq 23$. Then $L$ defines an embedding 
$X\subseteq \PP^N=\PP(H^0(X, L))$ whose ideal sheaf $\sI_{X|\PP^N}$  is globally generated by quadrics and cubics. 
\end{theorem}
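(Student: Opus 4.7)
The plan is to reduce to Theorem~\ref{thm:np} with $p = 1$, distinguishing a main case where that theorem applies directly and an exceptional case that requires a refined argument.

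Since $L$ has type $(1,d)$, one has $(L^2) = 2d$, and the hypothesis $d \geq 23$ forces $(L^2) \geq 46 > 45 = 5(1+2)^2$, so the numerical hypothesis of Theorem~\ref{thm:np} is satisfied for $p = 1$. The very ampleness of $L$, combined with the abelian-surface version of Reider's theorem (equivalently, the $p = 0$ case of Theorem~\ref{thm:np}), excludes any elliptic curve $C \subseteq X$ with $(C^2) = 0$ and $(L \cdot C) \in \{1, 2\}$. Hence the only obstacle to applying Theorem~\ref{thm:np} with $p = 1$ is the possible existence of an elliptic curve $C$ with $(C^2) = 0$ and $(L \cdot C) = 3$.

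If no such curve $C$ exists, then Theorem~\ref{thm:np} applies immediately: $L$ satisfies property $(N_1)$, so $\sI_{X|\PP^N}$ is even generated by quadrics, which is more than enough. The content of the theorem is therefore concentrated in the exceptional case where such a $C$ does exist. Here $(N_1)$ must fail by the converse direction of Theorem~\ref{thm:np}, and one has to argue by hand that adding cubic generators suffices. The approach I would take is to revisit the proof of Theorem~\ref{thm:np} with an infinitesimal Newton--Okounkov flag whose first element is (a translate of) $C$, constructing for a generic point $x \in X$ an effective $\QQ$-divisor $D \equiv \tfrac{1-c}{3}L$ whose multiplier ideal $\sJ(X;D)$ cuts out $\sI_{X,x}$, while tolerating the failure of Theorem~\ref{thm:verygeneric} in the direction of $C$. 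Combining this with the restricting-syzygies machinery of \cite{GLP, EGHP} then feeds into vanishing of the Koszul cohomology $K_{1,q}(X,L)$ for $q \geq 4$, which is equivalent to the ideal being generated in degrees two and three.

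The main obstacle is the exceptional case: the fibration $X \to X/\langle C \rangle$ degenerates one direction of the infinitesimal Newton--Okounkov polygon, so Theorem~\ref{thm:verygeneric} cannot be used as a black box. The key geometric input needed to get around this is that the one-parameter family of translates of $C$ foliates $X$, and $L$ embeds each such elliptic curve as a plane cubic; a direct analysis of the Newton--Okounkov polygon in this setting, together with a Nadel vanishing argument in the spirit of \cite{LPP}, should yield the required cubic-degree generation.
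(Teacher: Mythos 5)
Your reduction to the exceptional case is correct and matches the paper's opening move: $(L^2)\geq 46>45=5\cdot 3^2$, very ampleness rules out $(L\cdot C)\leq 2$, and when no elliptic curve with $(L\cdot C)=3$ exists Theorem~\ref{thm:np} gives $(N_1)$ outright. But your plan for the exceptional case has a genuine gap. You propose to construct $D\equiv\frac{1-c}{3}L$ with $\sJ(X;D)=\sI_{X,x}$; if such a divisor existed (globally), Theorem~\ref{thm:LPP} would yield property $(N_1)$ for $L$ --- which is precisely what Theorem~\ref{thm:counterexample} shows is \emph{false} when an elliptic curve $C$ with $(L\cdot C)=3$ is present. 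So the divisor you are trying to produce cannot exist, and ``tolerating the failure of Theorem~\ref{thm:verygeneric}'' will not help: the obstruction is not in the Newton--Okounkov analysis but in the fact that $\epsilon(\tfrac{1}{3}L;o)=(\tfrac13 L\cdot C)=1$ is exactly borderline, so $(2,1)$ sits on the \emph{boundary} of $\Delta_{(E,z)}(\pi^*(\tfrac13 L))$ and drops outside after scaling by $1-c$.

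What the paper does instead is weaken the vanishing target, not the divisor's numerics. ``Generated by quadrics and cubics'' is strictly weaker than $(N_1)$, and is encoded (via Pareschi--Popa and Inamdar, Lemmas~\ref{lem:pp}--\ref{lem:inamdar}) by the vanishing of $H^1(X^{\times 3},\sI_\Sigma\otimes L^{\otimes h}\boxtimes L\boxtimes L)$ for $h\geq 2$; the extra copy of $L$ (i.e.\ $h\geq 2$ rather than $h\geq 1$) buys room, and Proposition~\ref{prop:lpp} shows it suffices to find $D\equiv\frac{1+c}{3}L$ --- note $1+c$, not $1-c$ --- with $\sJ(X;D)=\sI_{X,o}$ globally. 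This is exactly what the borderline position of $(2,1)$ permits: scaling $B=\tfrac13 L$ by $1+c$ pushes $(2,1)$ into the interior of the Newton--Okounkov polygon, so Theorem~\ref{thm:nopolygon} and Theorem~\ref{thm:nopolygonabelian} apply. You also suggest invoking the restricting-syzygies machinery of~\cite{GLP,EGHP} to produce the generation statement, but in this paper that machinery serves only the \emph{negative} direction (Section~4, proving $(N_p)$ fails); the positive step here goes through the LPP/Inamdar framework instead. Finally, there is no need to switch the Newton--Okounkov flag to $(C,x)$: the paper stays with the infinitesimal flag $(E,z)$ at the blown-up origin throughout.
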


The next application has a more classical flavour as it deals with  multiples of an ample divisor $L$.

\begin{corollary}\label{cor:one}
Let $X$ be an abelian surface,  $L$ an ample line bundle on $X$ with $(L^2)\geq 5$. Then
\begin{enumerate}
\item The line bundle $L^{\otimes (p+3)}$ satisfies property $(N_p)$.
\item The line bundle $L^{\otimes (p+2)}$ satisfies property $(N_p)$ if and only if  $(X, L)\ncong (C_1\times C_2, L_1\boxtimes L_2)$, 
where $L_1$ is a principal polarization of the elliptic curve $C_1$ and $L_2$ is of type $(d)$ on the elliptic curve $C_2$.
\end{enumerate}
\end{corollary}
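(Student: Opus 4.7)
The plan is to deduce both statements as direct applications of Theorem~\ref{thm:np} to the multiples $L^{\otimes(p+3)}$ and $L^{\otimes(p+2)}$ respectively. In either case, the numerical hypothesis $(M^2)\geq 5(p+2)^2$ is automatic from $(L^2)\geq 5$, since $((p+3)L)^2=(p+3)^2(L^2)\geq 5(p+3)^2\geq 5(p+2)^2$ and analogously for the $(p+2)$-fold multiple. Part (1) is then immediate: for any irreducible curve $C\subseteq X$ ampleness gives $(L\cdot C)\geq 1$, so $(L^{\otimes(p+3)}\cdot C)\geq p+3>p+2$, and the elliptic curve condition of Theorem~\ref{thm:np} is vacuously satisfied.

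Part (2) requires more work. Setting $M=L^{\otimes(p+2)}$, the inequality $1\leq (M\cdot C)\leq p+2$ is equivalent to $(L\cdot C)=1$, since $(M\cdot C)=(p+2)(L\cdot C)$ and $(L\cdot C)$ must be a positive integer. Theorem~\ref{thm:np} therefore reduces the statement to the following equivalence: $(X,L)$ splits as $(C_1\times C_2,\, L_1\boxtimes L_2)$ with $L_1$ principal and $L_2$ of type $(d)$ if and only if $X$ contains an elliptic curve $C$ with $(C^2)=0$ and $(L\cdot C)=1$. The forward direction is immediate by taking $C=C_1\times\{0\}$, on which $L$ restricts to the degree-one bundle $L_1$.

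For the converse, the strategy is standard. After translating so that $C\subseteq X$ is an abelian subvariety, $L|_C$ has degree one, so $\phi_{L|_C}\colon C\to \hat C$ is an isomorphism. The composition
\[
X\ \xrightarrow{\phi_L}\ \hat X\ \lra\ \hat C\ \xrightarrow{\phi_{L|_C}^{-1}}\ C\ ,
\]
in which the middle map is dual to $C\hookrightarrow X$, restricts to the identity on $C$ and thus provides a retraction of the inclusion. This splits $X$ as a product $C\times C_2$, and the see-saw principle identifies $L$, up to translation, with $L|_C\boxtimes L|_{C_2}$; the assumed type of $L$ then pins down $L|_{C_2}$ to be of type $(d)$. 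The only step that requires care is this last one, where a purely numerical condition—the existence of one elliptic curve with $(L\cdot C)=1$—is leveraged into the geometric conclusion that $(X,L)$ is a product of polarised elliptic curves, a consequence of the classical fact that a principal polarisation on an abelian subvariety yields a section of the polarisation map.
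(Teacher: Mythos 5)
Your proposal is correct and follows the same route as the paper: reduce both parts to Theorem~\ref{thm:np} applied to $L^{\otimes(p+3)}$ and $L^{\otimes(p+2)}$ (the numerical hypotheses are automatic, and $(M\cdot C)\leq p+2$ forces $(L\cdot C)=1$), then translate the existence of an elliptic curve with $(L\cdot C)=1$ into the splitting of $(X,L)$. The only difference is that the paper simply cites Nakamaye \cite{Nak}*{Lemma 2.6} for this last equivalence, whereas you re-derive it from scratch via the retraction $X\to\hat X\to\hat C\to C$ induced by $\phi_L$ and the principal polarization $\phi_{L|_C}$; your self-contained argument is sound, and rests on the same classical fact that an abelian subvariety on which the polarization restricts to a principal one admits a complement.
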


To see this, by  \cite{Nak}*{Lemma 2.6} the existence of an elliptic curve $C_1\subseteq X$ with $(L\cdot C_1)=1$ is equivalent to  $X$ being  
the product of $C_1$ with another elliptic curve $C_2$, and $L\simeq\sO_{C_1\times C_2}(P, D)$, where $P\in C_1$ a point and $D$ a divisor on $C_2$. 
Taking this into account,  Corollary~\ref{cor:one} is an immediate  consequence of Theorem~\ref{thm:np}.

Corollary~\ref{cor:one} recovers, at least in dimension two, Pareschi's result mentioned above under the mild additional condition that $(L^2)\geq 5$. 
Based on their theory of $M$-regularity on abelian varieties, Pareschi and Popa in \cite{PP} obtain  that for an ample line bundle $L$ with 
no fixed components on an abelian variety $X$,  $L^{\otimes(p+2)}$ satisfies property $(N_p)$. Hence  Corollary~\ref{cor:one} $(2)$ is a numerical counterpart of 
\cite{PP}*{Theorem 6.2} in the surface case. 

We point out that Corollary~\ref{cor:one} and \cite{PP}*{Theorem 6.2}  are in fact quite close to each other in dimension two. 
Recall that any ample line bundle $L$ on an abelian surface $X$ is of polarization $(d_1,d_2)$, for some $d_1,d_2\in\NN$ with $d_1$ dividing $d_2$. 
If $d_1\geq 2$, then $L$ is base point free with $(L^2)\geq 8$. Thus, Corollary~\ref{cor:one} $(2)$ delivers  the same result as \cite{PP}*{Theorem 6.2}. 
If $d_1=1$, then by the Decomposition theorem \cite{BL}*{Theorem~4.3.1} the linear series  $|L|$ has base curves if and only if  
$(X, L)\equiv (C_1\times C_2, L_1\boxtimes L_2)$, where $L_1$ is a principal polarization on $C_1$ and $L_2$ is of type $(d)$ on $C_2$. 
Thus,  line bundles of polarization $(1,2)$ are the only case not covered by Corollary~\ref{cor:one} $(2)$, but still within the scope of the work of Pareschi and Popa.

The third consequence of Theorem~\ref{thm:np} deals with $k$-very ampleness, another notion of strong positivity. One of the applications of 
\cite{EGHP} is that property  $(N_p)$ implies  $(p+1)$-very ampleness (a line bundle $L$ is $k$-very ample if the restriction map 
$H^0(X,L)\rightarrow H^0(L|_Z)$ is  surjective for any $0$-dimensional subscheme $Z\subseteq X$ of length at most $k+1$). 
By \cite{EGHP}*{Remark 3.9} one obtains the following. 

\begin{corollary}\label{cor:two}
Under the assumptions of Theorem~\ref{thm:np}, the following two conditions are equivalent:
\begin{enumerate}
\item $X$ does not contain an elliptic curve $C$ with $(C^2)=0$ and $1\leq (L\cdot C) \leq p+2$.
\item The line bundle $L$ is $(p+1)$-very ample.
\end{enumerate} 
\end{corollary}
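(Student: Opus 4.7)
The plan is to deduce Corollary~\ref{cor:two} as a short consequence of Theorem~\ref{thm:np} together with the relationship between property $(N_p)$ and $k$-very ampleness established in \cite{EGHP}.

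For the direction $(1) \Rightarrow (2)$, I would simply chain two implications. Theorem~\ref{thm:np} converts condition (1) into the statement that $L$ satisfies property $(N_p)$, and the cited \cite{EGHP}*{Remark 3.9} then delivers the $(p+1)$-very ampleness of $L$.

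For the reverse implication $(2) \Rightarrow (1)$, I would argue by contrapositive. Assume there is an elliptic curve $C \subseteq X$ with $(C^2) = 0$ and $1 \leq (L \cdot C) \leq p+2$, and let me derive a contradiction with the $(p+1)$-very ampleness of $L$. First I would observe that $k$-very ampleness of a line bundle on the ambient surface restricts to $k$-very ampleness on any closed subvariety: for a $0$-dimensional subscheme $Z \subseteq C \subseteq X$ of length at most $p+2$, the surjection $H^0(X,L) \twoheadrightarrow H^0(Z, L|_Z)$ factors through $H^0(C, L|_C)$, forcing $H^0(C, L|_C) \twoheadrightarrow H^0(Z, L|_Z)$ to be surjective as well. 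Hence $L|_C$ would be $(p+1)$-very ample on $C$. Next I would invoke the following elementary criterion on an elliptic curve: a line bundle $M$ on an elliptic curve is $k$-very ample if and only if $\deg M \geq k+2$, which is a direct Riemann--Roch computation since the obstruction $H^1(C, M(-Z))$ vanishes on a degree-$(k+1)$ effective divisor $Z$ precisely when $\deg M - (k+1) \geq 1$. But $\deg(L|_C) = (L\cdot C) \leq p+2 < p+3$, so $L|_C$ cannot be $(p+1)$-very ample, contradicting the assumption.

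I do not foresee any real obstacle in executing this plan, since the work amounts to packaging the previously established equivalence (1)$\Leftrightarrow$$(N_p)$ with the two standard facts used above. The only mild point of care is the $k$-very ampleness criterion on elliptic curves, but this is nothing more than a one-line Riemann--Roch check.
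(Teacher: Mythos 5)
Your proposal is correct. For $(1)\Rightarrow(2)$ it coincides exactly with the paper's route: Theorem~\ref{thm:np} converts condition (1) into property $(N_p)$, and \cite{EGHP}*{Remark 3.9} upgrades $(N_p)$ to $(p+1)$-very ampleness. For $(2)\Rightarrow(1)$ the paper itself does not spell out an argument, but instead observes that this direction can be obtained from Terakawa's work \cite{Ter}, which the authors note depends essentially on Reider's theorem and hence on vector bundle techniques. You give a genuinely different and more elementary derivation: restrict $(p+1)$-very ampleness of $L$ to a hypothetical elliptic curve $C$ (using that the evaluation $H^0(X,L)\to H^0(Z,L|_Z)$ factors through $H^0(C,L|_C)$ for $Z\subset C$), then apply the Riemann--Roch criterion that a line bundle $M$ on an elliptic curve is $k$-very ample if and only if $\deg M\geq k+2$, which forces $(L\cdot C)\geq p+3$ and contradicts $(L\cdot C)\leq p+2$. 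Both routes are valid; yours is a short self-contained argument in the spirit of the paper's stated goal of avoiding Reider-type vector bundle methods, whereas the paper's pointer to Terakawa trades that for an external citation. One tiny stylistic caveat: when stating the Riemann--Roch criterion, it is the vanishing of $H^1(C,M(-Z))$ for \emph{every} length-$(k+1)$ subscheme $Z$ that is equivalent to $\deg M\geq k+2$ (the degree-$(k+1)$ case has $H^1\neq 0$ only for the special $Z$ with $\mathcal{O}_C(Z)\cong M$); your phrasing is slightly compressed but the conclusion is right.
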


The implication $(2)\Rightarrow (1)$ can also be derived from Terakawa's paper \cite{Ter}; the  condition  $(L^2)\geq 5(p+2)^2$ yields that  
the  result of Terakawa is in fact equivalent to Corollary~\ref{cor:two}. On the other hand, the argument in \cite{Ter} relies on Reider's theorem in an 
essential way, hence again makes crucial use of vector bundle techniques. 
 
The main results on $k$-very ampleness on abelian surfaces  are due to Bauer and Szemberg. In \cite{BS1} they treat  powers of an ample line bundle on 
an arbitrary abelian variety, while in  \cite{BS2} they tackle the case of a primitive line bundle on an abelian surface with Picard number one.
Again, the novelty of Corollary~\ref{cor:two} is that neither does it require $L$ to be a high multiple of an ample line bundle, nor does it put a restriction 
on the underlying geometry of $X$.  Note also that  $k$-very ampleness has been successfully studied  using derived category methods 
by  Arcara--Bertram (cf. \cite{AB}*{Corollary~3.9.(b)}),  Alagal--Maciocia \cite{AM}, and again by  Pareschi--Popa  \cite{PP}.

\subsection{Sketch of the proof}
We outline the ideas behind the implication $(1)\Rightarrow (2)$ in Theorem~\ref{thm:np} for the case of  projective normality, i.e. $p=0$. To this end, let 
$X$ be an abelian surface over the complex numbers and $L$  an ample line bundle on $X$ with $(L^2)\geq 20$.  Suppose in addition that $X$ does not contain 
any elliptic curve $C$ with $(C^2)=0$ and $1\leq (L\cdot C)\leq 2$. To ease the presentation we assume that there exists a 
Seshadri-exceptional  curve $F\subseteq X$  through the origin $o\in X$ such  that $r\deq (L\cdot F)\geq q=\mult_o(F)\geq 2$ and 
$\epsilon\deq \epsilon(L;o)=r/q$. It is worth pointing out  here that it is the case $q=1$ when elliptic curves of small $L$-degree occur.

Our starting point is the method of  \cite{LPP}, which builds on the following observation: consider the diagonal $\Delta\subseteq X\times X$ 
with ideal sheaf $\sI_{\Delta}$; as shown in \cite{I},  projective normality of $L$ is equivalent to the vanishing condition
\begin{equation}\label{eq:intro Green}
H^1(X\times X, \textup{pr}_1^*(L)\otimes \textup{pr}_2^*(L^{\otimes h})\otimes\sI_{\Delta}) \equ 0\  \text{for all $h\geq 1$}.
\end{equation}
The authors of \cite{LPP} then go on to show that in order to guarantee the vanishing in (\ref{eq:intro Green}), it suffices to verify 
the existence of an effective  $\QQ$-divisor 
\[
D\ \equiv \ \frac{1-c}{2}L, \textup{ for some } 0<c\ll 1 
\]
satisfying the  additional property that $\sJ(X,D)=\sI_{X,o}$. Assuming one can do so, using  the difference morphism $\delta\colon X\times X\rightarrow X$ 
given  by  $\delta (x,y)=x-y$, one deduces that  $\sI_{\Delta}=f^*(\sJ(X;D))=\sJ(X\times X,f^*(D))$, which in turn leads to (\ref{eq:intro Green}) via 
Nadel vanishing for multiplier ideals. 

While directly constructing divisors with a given multiplier ideal is quite difficult in general, a simple observation 
from homological algebra (see Corollary~\ref{cor:normality}) ensures that at least in the case of projective normality  
 it is enough to exhibit  such a divisor $D$ with $\sJ(X,D)=\sI_{X,o}$ locally around the origin $o\in X$. 
 
This is where the main new ingredient of the paper comes into play: it turns out that one can use  infinitesimal Newton--Okounkov polygons to show 
the existence of   suitable $\QQ$-divisors $D$ with $\sJ(X,D)=\sI_{X,o}$ over an open subset containing $o$. 

Write  $\pi:X'\rightarrow X$ for the blowing-up of $X$ at the origin $o$ with exceptional divisor $E$, and let $B\deq\frac{1}{2}L$. The first step is 
to find a criterion in terms of infinitesimal Newton--Okounkov polygons that guarantees the existence of divisors  $D$ as above. In Theorem~\ref{thm:nopolygon}
 we show that if
\[
\intt \Big(\Delta_{(E,z)}(\pi^*(B)) \ \cap \ \underbrace{\{(t,y)\ | \ t\geq 2, 0\leq y\leq \frac{1}{2}t\}}_{\Lambda}\Big)
\] 
is non-empty for any $z\in E$, then one will always find an effective $\QQ$-divisor $D=(1-c)B$ with $\sJ(X;D)=\sI_{X,o}$ in a neighborhood of  $o$.

Aiming at a contradiction suppose  that for some $z_0\in E$ the Newton--Okounkov polygon $\Delta_{(E,z_0)}(\pi^*(B))$ does not intersect the interior of 
the region $\Lambda$ (for an illustration see Figure~\ref{fig:1} (a)). This implies that the polygon $\Delta_{(E,z_0)}(\pi^*(D))$ sits above a certain line 
that passes through the point $(2,1)$. But since the area of  $\Delta_{(E,z_0)}(\pi^*(B))$ is quite big (equal to $(B^2)/2\geq 5/2$),  the Seshadri constant
 $\epsilon(B;o)$ is then forced to be  small by convexity, for it is  equal to the size of the largest inverted simplex inside $\Delta_{(E,z_0)}(\pi^*(B))$ 
by \cite{KL}*{Theorem 3.11}.  A more precise computation gives  the upper bound  $\epsilon(B;o)\leq \frac{5-\sqrt{5}}{2}$. 

In order to obtain  a contradiction, notice that $X$ carries a transitive group action, thus the origin $o\in X$ behaves like a very general point. 
Relying on  \cite{EKL} and \cite{NakVeryGen},  the authors show in \cite{KL}*{Proposition 4.2}   the  inclusion  
\[
\Delta_{(E,z)}(\pi^*(B)) \ \subseteq \ \triangle OAC, \text{ for generic point }z\in E \ ,
\]
where $O=(0,0),A=(r/2q,r/2q)$ and $C=r/2(q-1)$ (see Figure~\ref{fig:1} (b)). The area of the polygon on the left-hand side  is  $(B^2)/2\geq 5/2$, hence a simple area comparison gives  
$\epsilon(B;o)=\frac{1}{2}\epsilon(L;o)\geq \sqrt{\frac{5}{2}}$, which immediately leads to a contradiction since  
 $\frac{5-\sqrt{5}}{2} < \sqrt{\frac{5}{2}}$.

\begin{figure} 
\begin{tikzpicture}
%
%
\draw [->]  (0,0) -- (4.5,0);   
\node [below right] at (4.5,0) {$t$};
\draw [->]  (0,0) -- (0,4.5); 
\node [left] at (0,4.5) {$y$}; 
\draw (0,0) -- (4.5,4.5); 
\node [left] at (4,4.5) {$y=t$};
\draw [fill=gray, ultra thick] (0,0) -- (2,2) -- (3.5,3) -- (2,0) -- (0,0);  
\node [left] at (2.5,3) {$\Delta_{(E,z_0)}(\pi^*B)$};
\draw (2,0) -- (2,2); 
\draw [ultra thick] (3,0) -- (3,1.5);  
\draw (0,0) -- (5,2.5); 
\node [above] at (5,2.5) {$y=\frac{1}{2}t$};  
\shadedraw [dotted,shading=axis] (3,0) -- (3,1.5) -- (5,2.5) -- (4.25,0);
\node [above] at (4,1) {$\Lambda$}; 
\draw (4.24,0) -- (3,0) -- (3,1.5) -- (5,2.5);
%
%
\node [below left] at (0,0) {$O$};
\node [below] at (2,0) {$(\epsilon,0)$};
\filldraw [black]  (3,1.5) circle (3pt); 
\node [above] at (3.67,1.85) {$(2,1)$};
\end{tikzpicture}
\begin{tikzpicture}
%
%
\draw [->]  (0,0) -- (4.5,0);   
\node [below right] at (4.5,0) {$t$};
\draw [->]  (0,0) -- (0,4.5); 
\node [left] at (0,4.5) {$y$}; 
\draw (0,0) -- (4.5,4.5); 
\node [left] at (4,4.5) {$y=t$};
\draw [fill=gray, ultra thick] (0,0) -- (3,3) -- (3.5,1.5) -- (3.75,0) -- (0,0);  
\node [left] at (2,3) {$\Delta(\pi^*B)$};
\draw (3,0) -- (3,3); 
\draw (3,3) -- (4,0);
%
%
\node [below left] at (0,0) {$O$};
\node [below] at (3,0) {$(\epsilon,0)$};
\node [above] at (3,3) {$A$};
\node [below] at (4,0) {$C$};
\end{tikzpicture} 
\caption{(a) $\Delta_{(E,z_0)}(\pi^*(B))$ and the region $\Lambda$\ \ \ (b) The containment $\Delta_{(E,z)}(\pi^*(B))\subseteq \triangle OAC$}\label{fig:1}
\end{figure}
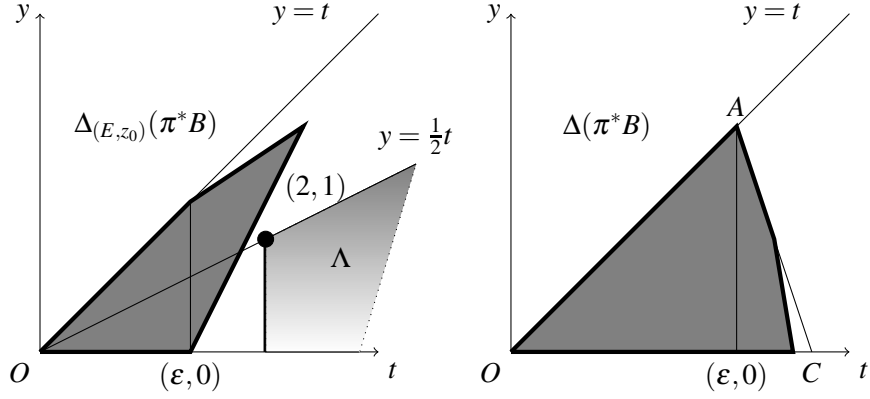

\subsection{Notation and terminology}

In the course of this work, $X$ stands for a smooth projective surface, which from Section 3 onwards will be required to be abelian. The morphism $\pi\colon X'\to X$ 
without exception denotes  the blow-up of a point $x\in X$, which is going to be taken to be the origin whenever $X$ is abelian. A divisor means a $\QQ
$- or $\RR$-Cartier divisor depending on the  context. If we insist that a certain divisor $D$ is integral, we explicitly say so. 

The notation $\Delta_{(C,x)}(D)$ stands for the Newton--Okounkov body $\Delta_{Y_\bullet}(D)$ with $Y_1=C$ and $Y_2=\st{x}$, while $\Delta_x(D)$ denotes 
the generic infinitesimal Newton--Okounkov body $\Delta_{(E,z)}(\pi^*D)$ (see \cite{KL}*{Section 3}).

We work over the complex numbers, although some of our  results might be valid over an arbitrary algebraically closed field. We do not strive for optimal hypotheses. 

\subsection{Organization of the paper}

After a quick recap on infinitesimal Newton--Okounkov bodies, Section 2 deals with constructing singular divisors on arbitrary surfaces at 
arbitrary,  and later,  at very general points. 

Section 3 is devoted to  results specific to abelian surfaces, in particular to the proof of Theorem~\ref{thm:np} along with a description of the method of \cite{LPP},
and a short discussion of the case of projective normality. It is also here that we present a criterion for Koszulness of section rings in terms of self-intersection 
numbers. 

In Section 4 we treat the converse direction of our main theorem: we prove a result that  the existence of low degree elliptic curves on $X$
leads to property $(N_p)$  not being met. 

Finally, in Section 5 we answer the Gross--Popescu conjecture on $(1,d)$-polarized abelian surfaces for $d\geq 23$.

\subsection{Acknowledgements} The authors are grateful to Lawrence Ein, S\'andor Kov\'acs,   and John C. Ottem for helpful discussions. 
We thank Dave Anderson and Mihnea Popa for their valuable   comments on an earlier version of this manuscript. Special thanks are in order to Rob Lazarsfeld 
for his remarks and suggesting a substantial shortcut in Section 4, to Klaus Hulek for his advice regarding \cite{EGHP}, and   to  Thomas Bauer for pointing out 
the class of examples in \cite{BS}*{Example 2.1}. The illustrations were done using the Ti$k$Z package.

\section{Newton-Okounkov polygons and singular divisors}

In this section we give a  recipe for obtaining effective $\QQ$-divisors with prescribed  numerics and   non-trivial multiplier ideal at a given point. 
After a quick overview of infinitesimal Newton--Okounkov polygons  we will focus on a sufficient condition for the  existence of such divisors. We end this section 
with a similar  statement for very general points. 

Note that the results of this section are valid without exception for an arbitrary smooth projective surface $X$; $x\in X$ will be a fixed point, and $\pi\colon X'\to X$ the appropriate blow-up with exceptional divisor $E$.

\subsection{Infinitesimal Newton--Okounkov polygons}

For the general theory of Newton--Okounkov bodies and basic facts we kindly refer the reader to the writings \cites{Bou1,KK,LM}, as far as the two-dimensional 
theory is concerned, the reader is invited to look at \cites{KL,KLM,Roe}. 

Thanks to \cite{LM}*{Theorem 6.2} Newton--Okounkov bodies on smooth projective surfaces are straightforward to determine assuming  that one has 
full information on the variation of Zariski decomposition \cite{BKS}*{Theorem 1.1} in the N\'eron--Severi space of $X$. A more precise analysis using the results
of \cite{BKS} shows that in fact $\Delta_{(C,x)}(D)$ is a polygon with rational slopes and up to possibly two exceptions, rational vertices as well\footnote{One can in 
fact do better, assuming one has control over the  flag. It is shown in \cite{AKL}*{Proposition 11} that given a line bundle $D$, it is always possible 
to arrange for $\Delta_{(C,x)}(D)$ to be a rational polygon.} (see \cite{KLM}*{Theorem B and Proposition 2.2}).

The theory of Newton--Okounkov polygons has been  treated in \cite{KL}, here we will work with so-called \emph{infinitesimal Newton--Okounkov polygons}, convex 
bodies determined by flags coming from exceptional divisors (see \cite{KL15} for the higher-dimensional theory). With notation as above, an infinitesimal Newton--Okounkov
polygon of a divisor $D$ at a point $x\in X$ is a polygon of the form $\Delta_{(E,z)}(\pi^*D$), where $z\in E$ is an arbitrary point. 
The basic convex-geometric objects that play a decisive role for local positivity in terms of infinitesimal data are the \emph{inverted standard simplices}
\[
 \iss{\xi} \deq \st{(t,y)\in\RR^2\mid 0\leq t\leq \xi\, ,\, 0\leq y\leq t}\ .
\]

As it turns out, one has more control over infinitesimal Newton--Okounkov polygons than in general. 

\begin{proposition}[\cite{KL}*{Proposition 3.1}]\label{prop:propinf}
With notation as above, one has 
\begin{enumerate}
\item $\Delta_{(E,z)}(\pi^*D)\subseteq \Delta_{\mu'}^{-1}$ for any $z\in E$;
\item there exist finitely many points $z_1,\ldots ,z_k\in E$ such that the polygon $\Delta_{(E,z)}(\pi^*D)$ is independent of 
$z\in E\setminus\{z_1,\ldots,z_k\}$, with  base  the whole line segment $[0,\mu']\times\{0\}$, 
\end{enumerate}
where $\mu'\deq \sup\st{t>0\mid \pi^*D-tE\text{ is big }}$. 
\end{proposition}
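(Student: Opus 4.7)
Writing $D'\deq \pi^*D$, recall that $\Delta_{(E,z)}(D')$ is the closure of the set of points $(\nu_1(s)/m,\nu_2(s)/m)$ where $0\neq s \in H^0(X',mD')$, $\nu_1(s)\deq \ord_E(s)$, and $\nu_2(s)\deq \ord_z(\bar s)$ with $\bar s$ the restriction to $E$ of the section $s/s_E^{\nu_1(s)}$ of $mD' - \nu_1(s)E$ (which is not identically zero on $E$ by construction). I would prove the two parts separately, reducing each to a one-line geometric observation.

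For part (1), the polygon $\iss{\mu'}$ is cut out by $0\leq y\leq t\leq \mu'$. The inequality $t\leq \mu'$ is a standard projection statement for Newton--Okounkov bodies on surfaces \cite{LM}: existence of $s$ forces $D' - (\nu_1(s)/m)E$ to be pseudo-effective, and along the one-parameter family $D'-tE$ the pseudo-effective and big thresholds coincide because the pseudo-effective cone is the closure of the big cone. The essential new inequality $y\leq t$ is special to infinitesimal flags and comes from a degree count: since $D'\cdot E = 0$ and $E^2 = -1$, one has $(mD' - \nu_1(s)E)|_E \cong \sO_{\PP^1}(\nu_1(s))$, and any nonzero section of this bundle vanishes at $z$ to order at most $\nu_1(s)$, giving $\nu_2(s)\leq\nu_1(s)$. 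Dividing by $m$ and taking closure yields $y \leq t$.

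For part (2), I would invoke the chamber structure from \cite{BKS}: on $(0,\mu')$ the Zariski decomposition $D' - tE = P_t + N_t$ is piecewise-linear in $t$ with finitely many breakpoints, and on each chamber the support of $N_t$ consists of a fixed set of irreducible curves. Combining this with the description of slices of two-dimensional Newton--Okounkov polygons via Zariski decomposition (see \cite{LM, KLM}, adapted to the flag $(E,z)$), the slice of $\Delta_{(E,z)}(D')$ at height $t$ is an interval $[\alpha_z(t),\,\alpha_z(t) + P_t\cdot E]$ with $\alpha_z(t) \equ \ord_z(N_t|_E)$. Let $Z \subseteq E$ denote the set of intersection points of $E$ with the finitely many irreducible curves that appear in some $N_t$; this is a finite set $\{z_1,\ldots,z_k\}$. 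For every $z \in E\setminus Z$ we have $\alpha_z(t)\equiv 0$, so the slice depends only on $t$, which simultaneously gives independence of $z$ and forces the entire segment $[0,\mu']\times\{0\}$ to lie in the polygon.

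\textbf{Main obstacle.} The chief technical point is setting up the slice formula for a flag whose first element $E$ is an exceptional curve on the blow-up $X'$ rather than a smooth curve on the original surface, and verifying that the Zariski-decomposition description transports correctly. Once this adaptation is in place, (1) reduces to the degree observation on $\PP^1$ and (2) follows routinely from the piecewise-linearity of the Zariski decomposition established in \cite{BKS}.
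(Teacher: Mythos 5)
The paper does not contain its own proof of this statement: it is quoted verbatim from \cite{KL}*{Proposition 3.1} and used as a black box, so a direct side-by-side comparison is not possible. What I can do is evaluate the argument you give on its own merits.

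Your argument is correct, and it is the standard route one would take. For part (1) the bound $t\leq\mu'$ follows from the pseudo-effective/big threshold equality along the ray $\pi^*D-tE$, and the bound $y\leq t$ is precisely the degree count on $E\cong\PP^1$: since $\pi^*D\cdot E=0$ and $E^2=-1$, the restricted bundle $(m\pi^*D-\nu_1(s)E)|_E$ has degree $\nu_1(s)$, so a nonzero section cannot vanish at $z$ to higher order. For part (2), the chamber structure from \cite{BKS} together with the slice description from \cite{LM}*{Theorem 6.4} (as specialized in \cite{KLM}) shows that the lower boundary of the slice at height $t$ is $\alpha_z(t)=\ord_z(N_t|_E)$, which vanishes identically once $z$ avoids the finite set of points where the finitely many curves appearing in $N_t$, $t\in[0,\mu')$, meet $E$; this simultaneously gives independence of $z$ and the full base segment $[0,\mu']\times\{0\}$.

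One remark: the ``main obstacle'' you flag at the end is not actually an obstacle. The flag $(E,z)$ is a perfectly admissible flag on the smooth projective surface $X'$, the divisor $\pi^*D$ is big on $X'$, and \cite{LM}*{Theorem 6.4} and \cite{BKS} apply verbatim on $X'$; the fact that $E$ is exceptional for $\pi$ plays no role in the slice description, only in the degree computation $\pi^*D\cdot E=0$, $E^2=-1$. So no ``transport'' or adaptation of the Zariski-decomposition machinery is needed. (One also needs, as \cite{LM} record, that $E$ itself does not occur in $N_t$ for $t\geq\ord_E\|\pi^*D\|=0$, so that the restriction $N_t|_E$ is well defined; this is implicit in the slice formula you invoke and worth stating.)
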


The second part of the Proposition implies that it makes sense to talk about the \emph{generic infinitesimal Newton--Okounkov polygon} of $D$ at the point $x\in X$, 
which we denote by $\Delta_x(D)$. 

Local positivity can be described with the help of infinitesimal flags in a transparent way (cf. \cite{KL15}*{Theorem 3.1} and \cite{KL15}*{Theorem 4.1}). 

\begin{theorem}[\cite{KL}*{Theorem 3.8}]\label{thm:triangle}
Let  $D$ be  a big $\RR$-divisor on a smooth projective surface $X$. Then 
\begin{enumerate}
\item $x\notin \Neg(D)$ if and only if $(0,0)\in\Delta_{(E,z)}(\pi^*D)$ for any $z\in E$,
\item $x\notin \Null(D)$ if and only if there exists $\xi>0$ such that $\Delta_{\xi}^{-1}\subseteq\Delta_{(E,z)}(\pi^*D)$ for any $z\in E$.
\end{enumerate}
\end{theorem}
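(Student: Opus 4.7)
My plan is to translate both conditions into statements about the variation of Zariski decomposition of $\pi^*D - tE$ as $t$ ranges over the pseudo-effective interval $[0, \mu']$, using the surface description of Newton--Okounkov polygons from \cite{LM}*{Theorem 6.2} and \cite{KLM}. Writing $\pi^*D - tE = P_t + N_t$ for the Zariski decomposition, the horizontal slice of $\Delta_{(E,z)}(\pi^*D)$ at height $t$ is an interval $[\alpha_z(t), \beta_z(t)]$, where $\alpha_z(t)$ records the multiplicity at $z$ of $N_t|_E$ (viewed as an effective divisor on $E \cong \PP^1$) and $\beta_z(t) - \alpha_z(t) = P_t \cdot E$.

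For part (1), the condition $(0,0) \in \Delta_{(E,z)}(\pi^*D)$ for every $z \in E$ is equivalent to $\alpha_z(0) \equiv 0$ on $E$, i.e.\ to the statement that the negative part $N_0(\pi^*D)$ does not contain $E$ as a component. On $X$, unwinding the definition of $\Neg(D)$ as the diminished base locus of $D$ (equivalently, the support of the Zariski negative part $N(D)$), the point $x$ lies in $\Neg(D)$ precisely when $E$ appears in $\Supp(N_0(\pi^*D))$; this matches the polygonal statement.

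For part (2), the containment $\iss{\xi} \subseteq \Delta_{(E,z)}(\pi^*D)$ for all $z \in E$ forces $\alpha_z(t) \equiv 0$ and $\beta_z(t) \geq t$ throughout $[0,\xi]$; this means that $N_t$ is disjoint from $E$ and $P_t \cdot E \geq t$ for every such $t$. Equivalently, the class $\pi^*D - tE$ has nef positive part along $E$ for all $t \in [0, \xi]$, which by the infinitesimal characterization of the (moving) Seshadri constant amounts to $\epsilon(D; x) \geq \xi > 0$. Positivity of the Seshadri constant at $x$ is in turn equivalent to $x \notin \Null(D)$ by Nakamaye's theorem, extended from the nef to the big $\RR$-divisor case through surface Zariski decomposition and a standard approximation.

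The main obstacle will be the precise identification of $\alpha_z(t)$ with the multiplicity of $N_t|_E$ at $z$, a delicate residue-type computation controlling restrictions of sections of $\pi^*D - tE$ to $E$ in families over $t$; this is essentially the core content of the two-dimensional Zariski-chamber analysis of \cite{KLM}. A secondary technicality is the $\RR$-coefficient extension of Nakamaye's theorem in part (2), which requires an approximation argument reducing to the integral nef case.
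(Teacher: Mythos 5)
This theorem is quoted from \cite{KL}*{Theorem 3.8}; the present paper does not reprove it, so there is no in-paper argument to compare against, and I judge your plan on its own merits. Your translation of both statements into the variation-of-Zariski-decomposition picture via \cite{LM}*{Theorem 6.2} is the right framework, and the ``main obstacle'' you flag at the end (identifying $\alpha_z(t)$ with $\ord_z(N_t|_E)$) is in fact already a cited result, namely \cite{LM}*{Theorem 6.4}, so no delicate residue computation is needed. One imprecision in part (1): $(0,0)\in\Delta_{(E,z)}(\pi^*D)$ for all $z\in E$ is \emph{not} simply ``$E$ is not a component of $N(\pi^*D)$''. The $t$-support of the polygon starting at $0$ requires $\mult_E\|\pi^*D\|=0$, and on top of that the slice at $t=0$ touching the horizontal axis at every $z$ requires $N(\pi^*D)$ to be disjoint from $E$. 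Both conditions happen to encode $x\notin\Supp N(D)$ because $N(\pi^*D)=\pi^*N(D)$ (Zariski decomposition pulls back along the blow-up), so the conclusion is correct, but your reasoning should spell this out rather than conflate the two.

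The real concern is part (2). You route through Theorem~\ref{thm:Seshadri inverted simplex}, i.e.\ \cite{KL}*{Theorem 3.11}, plus the Seshadri-constant characterization of $\mathbf{B}_+$. In \cite{KL}, Theorem 3.11 is proved \emph{after} 3.8 and in all likelihood uses it, so this route risks circularity; you would need to verify independence before relying on it. A direct argument is no longer: with $D=P+N$ the Zariski decomposition, $x\notin\Null(D)$ forces $N$ to miss $x$ and $\epsilon(P;x)>0$, and for $0\leq t\leq\epsilon(P;x)$ one checks that $(\pi^*P-tE)+\pi^*N$ is already the Zariski decomposition of $\pi^*D-tE$ (orthogonality holds because $\pi^*N\cap E=\varnothing$), producing slices exactly $[0,t]$ and hence $\iss{\epsilon(P;x)}\subseteq\Delta_{(E,z)}(\pi^*D)$ for every $z$. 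Conversely if $x\in\Null(D)\setminus\Neg(D)$, pick an irreducible $C\ni x$ with $P\cdot C=0$; Hodge index gives $C^2<0$, so the proper transform $\widetilde C$ enters the negative part of $\pi^*D-tE$ for every small $t>0$, making $\alpha_z(t)>0$ at any $z\in\widetilde C\cap E$ and ruling out any $\iss{\xi}$. Lastly, ``by Nakamaye's theorem'' is not the right attribution for $\epsilon(\|D\|;x)>0\Leftrightarrow x\notin\mathbf{B}_+(D)$: Nakamaye's theorem is the identity $\Null(L)=\mathbf{B}_+(L)$ for nef and big $L$, whereas the moving-Seshadri characterization of $\mathbf{B}_+$ is a separate result of Ein--Lazarsfeld--Musta\c{t}\u{a}--Nakamaye--Popa, and you need both.
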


By Theorem~\ref{thm:triangle} and \cite{KL}*{Lemma 3.14}, it makes sense to define the \emph{largest inverted simplex constant} as follows
\[
 \xi(D;x) \deq  \sup \st{\xi>0\mid \iss{\xi}\subseteq \Delta_{(E,z)}(\pi^*D) }  \ .
\]
In fact it is proven in \cite{KL} that the right hand side does not depend on the choice of the point $z\in E$. Thus the notation makes sense. One of the main statements of \cite{KL} is that using these definitions one can recover quickly the moving Seshadri constant of the divisor $D$ at the point $x$. 

\begin{theorem}[\cite{KL}*{Theorem 3.11}]\label{thm:Seshadri inverted simplex}
Let $D$ be a big  $\RR$-divisor on $X$. If $x\notin\Neg(D)$, then 
\[
\epsilon(||D||;x)  \equ \xi(D;x)\ .
\]
\end{theorem}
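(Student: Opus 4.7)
The plan is to use the Lazarsfeld--Mustata Zariski-decomposition description of Newton--Okounkov polygons on surfaces as the bridge between $\xi(D;x)$ and $\epsilon(\|D\|;x)$. Applying \cite{LM}*{Theorem 6.4} on the blowup $X'$ with respect to the infinitesimal flag $(E,z)$, for each $t\in[0,\mu']$ I write the Zariski decomposition $\pi^*D-tE=P_t+N_t$ on $X'$. Then the vertical slice of $\Delta_{(E,z)}(\pi^*D)$ over the abscissa $t$ equals $[\alpha(t),\alpha(t)+(P_t\cdot E)]$, with $\alpha(t)=\ord_z(N_t|_E)$. Theorem~\ref{thm:triangle}(1) together with $x\notin\Neg(D)$ guarantees that $E\not\subseteq N_t$ at least for $t$ small, and for $z$ chosen generically on $E$ the function $\alpha(t)$ vanishes on every interval where $E$ is not a component of $N_t$. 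Thus for generic $z$ the slice reduces to $[0,(P_t\cdot E)]$ throughout that range.

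The central translation is then the following: since $(\pi^*D-tE)\cdot E=t$ and $(N_t\cdot E)\geq 0$ whenever $E\not\subseteq N_t$, one has $(P_t\cdot E)=t-(N_t\cdot E)\leq t$. Consequently the inscription condition $\iss{\xi}\subseteq\Delta_{(E,z)}(\pi^*D)$ is equivalent to $(N_t\cdot E)=0$ for every $t\in[0,\xi]$, i.e.\ to every irreducible component of $N_t$ being disjoint from $E$; conversely, such disjointness forces the slice to be exactly $[0,t]$, which accommodates the slice of $\iss{\xi}$ at height $t$. This yields
$$\xi(D;x)\equ\sup\bigl\{t\geq 0 \, \bigm| \, N_s\cdot E=0\text{ for every }s\in[0,t]\bigr\}.$$

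Finally, I invoke the surface-theoretic characterization of the moving Seshadri constant going back to Ein--Lazarsfeld and Nakamaye (cf.\ \cites{EKL,NakVeryGen}): under the hypothesis $x\notin\Neg(D)$,
$$\epsilon(\|D\|;x)\equ\sup\bigl\{t\geq 0 \, \bigm| \, N_s\cdot E=0\text{ for every }s\in[0,t]\bigr\},$$
because $P_s$ is automatically nef as the positive part of a Zariski decomposition, and the numerical identity $(P_s\cdot E)=s$ is the quantitative measure of the nefness of $\pi^*D-sE$ against $E$ at $x$. The two suprema then visibly coincide. Independence of $\xi(D;x)$ from the auxiliary choice of $z$ is supplied by Proposition~\ref{prop:propinf}(2).

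The main obstacle is the careful identification of $\epsilon(\|D\|;x)$ with the Zariski-decomposition supremum above when $D$ is merely big rather than ample or nef. For nef classes the description is essentially tautological, but in the big regime one must proceed by perturbation: write $D$ as a limit of classes of the form $D+\varepsilon A$ with $A$ ample, apply the nef description to each such perturbation, and pass to the limit using continuity of Zariski decomposition on the big cone \cite{BKS}. With this technical bridge secured, the equality $\epsilon(\|D\|;x)\equ\xi(D;x)$ follows from the slicing formula of \cite{LM} via the elementary intersection-number comparison above.
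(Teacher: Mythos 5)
Your step (A) --- the identification
\[
\xi(D;x)\equ\sup\bigl\{t\geq 0 \, \bigm| \, (N_s\cdot E)=0\text{ for every }s\in[0,t]\bigr\}
\]
via the Lazarsfeld--Musta\c{t}\u{a} slicing theorem --- is correct and cleanly argued: once one notes that $(N_t\cdot E)=0$ forces $N_t|_E=0$ as a divisor on $E$ (so $\alpha(t)=0$ for \emph{every} $z\in E$, not merely generic $z$), the inscription condition $\iss{\xi}\subseteq\Delta_{(E,z)}(\pi^*D)$ becomes visibly equivalent to the vanishing of $(N_s\cdot E)$ on $[0,\xi]$. This is in the spirit of the cited proof.

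The gap is step (B). You state that
\[
\epsilon(\|D\|;x)\equ\sup\bigl\{t\geq 0 \, \bigm| \, (N_s\cdot E)=0\text{ for every }s\in[0,t]\bigr\}
\]
is ``the surface-theoretic characterization of the moving Seshadri constant going back to Ein--Lazarsfeld and Nakamaye,'' but no such formula appears in those sources, and the sentence you offer as justification --- that $(P_s\cdot E)=s$ is ``the quantitative measure of the nefness of $\pi^*D-sE$ against $E$ at $x$'' --- is confused: $(\pi^*D-sE)\cdot E = s>0$ is automatic, so positivity \emph{against $E$} is never the issue; nefness must be tested against the other curves on $X'$. The missing content, even in the ample case, is the implication $(N_t\cdot E)=0 \Rightarrow N_t=0$. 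This is where ampleness (resp.\ nefness plus $x\notin\Null(D)$) must actually be used: if $\og$ is an irreducible component of $N_t$, then $\og\subseteq\Bs\|m(\pi^*D-tE)\|$ forces $(\pi^*D-tE)\cdot\og<0$, and since $\pi^*D\cdot\og\geq 0$ when $D$ is nef one gets $(E\cdot\og)>0$; hence $N_t\neq 0$ implies $(N_t\cdot E)>0$, and the supremum collapses to $\sup\{t:\pi^*D-tE\text{ nef}\}=\epsilon(D;x)$. Without this argument, your two suprema are not known to be equal; in fact asserting (B) is tantamount to asserting the theorem in different notation. The perturbation via $D+\varepsilon A$ to handle general big $D$ (and the degenerate case $x\in\Null(D)$, where both sides must be shown to be $0$) is a reasonable outline, but as written it is only a plan, and it still rests on the unproved ample case. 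In short: (A) is right and on-target, (B) is where the real work is and you have replaced it by a citation to a statement that does not exist.
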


As we will see later, local positivity at very general points is somewhat less difficult to control.  
An important observation of Nakamaye's, based on the ideas from \cite{EKL}, is the source of the following convex-geometric estimate, which has manifold applications 
(see \cite{KL}*{Subsection 4.1} for instance). 

\begin{proposition}[\cite{KL}*{Proposition 4.2}]\label{prop:genericinf}
Let $L$ be an ample Cartier divisor on $X$ and  $x\in X$ be a very general point. Then the following mutually exclusive cases can occur. 
\begin{enumerate}
\item $\mu'(L;x)=\epsilon (L;x)$, then $\Delta_x(L)=\Delta^{-1}_{\epsilon(L;x)}$.
\item $\mu'(L,x)>\epsilon (L,x)$, then  there exists an irreducible curve $F\subseteq X$ with $(L\cdot F)=p$ and $\mult_x(F)=q$ such that $\epsilon(L;x)=p/q$. 
Under these circumstances, 
\begin{enumerate}
\item If $q\geq 2$, then $\Delta_x(L)\subseteq \triangle_{ODR}$, where $O=(0,0), D=(p/q,p/q)$ and $R=(p/(q-1),0)$.
\item If $q=1$, then the polygon $\Delta_x(L)$ is contained in the area  below the line $y=t$, and between the horizontal lines 
$y=0$ and $y=\epsilon(L;x)$.
\end{enumerate}
\end{enumerate}
\end{proposition}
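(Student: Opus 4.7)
My plan is to deduce both alternatives from the sandwich
$$
\iss{\epsilon(L;x)} \ \subseteq \ \Delta_{x}(L) \ \subseteq \ \iss{\mu'(L;x)},
$$
where the right-hand inclusion is Proposition~\ref{prop:propinf}(1), and the left-hand one combines Theorem~\ref{thm:Seshadri inverted simplex} with the definition of $\xi(L;x)$. Case (1) is then immediate: if $\mu'(L;x)\equ \epsilon(L;x)$, the two inverted simplices coincide and force equality throughout.

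For case (2) the first task is to produce the Seshadri-exceptional curve. Since $x$ is very general and $\mu'(L;x) > \epsilon(L;x)$, the classical result of Ein--K\"uchle--Lazarsfeld~\cite{EKL}, refined by Nakamaye~\cite{NakVeryGen} in the surface case, delivers an irreducible curve $F\subseteq X$ through $x$ with $\epsilon(L;x) \equ (L\cdot F)/\mult_x(F) \equ p/q$. Writing $\widetilde F \equ \pi^*F-qE$ for the proper transform on $X'$, one records the intersection numbers
$$
(\pi^*L-tE)\cdot\widetilde F \equ p-tq, \qquad \widetilde F^{\,2} \equ F^2-q^2.
$$

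To control the polygon I would invoke the explicit description from \cite{LM}*{Theorem~6.2} together with \cite{BKS}: writing $\pi^*L-tE \equ P(t)+N(t)$ for the Zariski decomposition, the slice of $\Delta_{(E,z)}(\pi^*L)$ at $t$ is $[\alpha(t),\alpha(t)+\beta(t)]$ with $\alpha(t)\equ \ord_z(N(t)|_E)$ and $\beta(t)\equ (P(t)\cdot E)$. For generic $z\in E$ the divisor $N(t)|_E$ has finite support, so $\alpha(t)\equ 0$ and the polygon is the hypograph $\{(t,y): 0\leq t\leq \mu',\, 0\leq y\leq \beta(t)\}$. On $[0,p/q]$ the divisor $\pi^*L-tE$ is nef, hence $N(t)\equ 0$ and $\beta(t)\equ t$, reproducing $\iss{p/q}$. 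For $t>p/q$ the negative part $N(t)$ must contain $\widetilde F$, since $(\pi^*L-tE)\cdot\widetilde F <0$; extracting the coefficient of $\widetilde F$ from the orthogonality relation $P(t)\cdot \widetilde F\equ 0$ exhibits $\beta(t)$ as an affine decreasing function of $t$ of slope $-(q-1)$ when $q\geq 2$ and slope $0$ on an initial subinterval when $q\equ 1$. This places the polygon inside $\triangle ODR$ in (2a), and inside the horizontal strip described in (2b).

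The main technical hurdle is the slope computation, which rests on Nakamaye's estimate $F^2\geq 0$ for the Seshadri-exceptional curve at a very general point (so that the pair $(\widetilde F,E)$ alone controls the obstructed part of the pseudo-effective cone of $X'$), together with the uniqueness of $F$ as a computer of $\epsilon(L;x)$, which precludes extra components entering $N(t)$ on the range $t\in (p/q,p/(q-1)]$. Once these inputs are secured, identifying $\beta(t)$ with the segment from $D\equ (p/q,p/q)$ to $R\equ (p/(q-1),0)$, respectively with the horizontal line $y\equ \epsilon(L;x)$ when $q\equ 1$, is a direct Zariski-decomposition calculation along the ray $\pi^*L-tE$.
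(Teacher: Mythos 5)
This proposition is stated as a citation from \cite{KL}*{Proposition 4.2}, so the paper at hand does not contain a proof of it; nevertheless the proposal contains a genuine gap worth pointing out.

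Your treatment of case (1) via the sandwich $\iss{\epsilon(L;x)}\subseteq\Delta_x(L)\subseteq\iss{\mu'(L;x)}$ is correct and clean. The problem is the slope computation in case (2). If one writes $N(t)=a(t)\widetilde F$ and extracts $a(t)$ from the orthogonality relation $P(t)\cdot\widetilde F=0$, one finds $a(t)=\tfrac{tq-p}{q^2-F^2}$ and hence $\beta(t)=\tfrac{tF^2-qp}{F^2-q^2}$, whose slope is $\tfrac{F^2}{F^2-q^2}$, not $-(q-1)$. Under the only input you invoke, namely $F^2\geq 0$, this slope can be as large as $0$ (take $F^2=0$), in which case the graph of $\beta$ leaves $\triangle ODR$ immediately to the right of $t=p/q$. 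For your orthogonality computation to deliver slope $\leq -(q-1)$ one would need the much stronger inequality $F^2\geq q(q-1)$, which neither Ein--K\"uchle--Lazarsfeld nor Nakamaye asserts. Relatedly, your claim that uniqueness of $F$ as a Seshadri computer ``precludes extra components entering $N(t)$'' on $(p/q,p/(q-1)]$ is unfounded: nothing prevents further negative curves from appearing, and indeed the paper's own Lemma~\ref{lem:estimate for many curves} is explicitly set up to handle several curves $\og_1,\dots,\og_r$ arising along the ray.

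The correct engine for case (2), and the one the paper uses in its Lemma~\ref{lem:estimate for many curves}, is Nakamaye's asymptotic-multiplicity estimate at a very general point (\cite{NakVeryGen}*{Lemma 1.3}, based on \cite{EKL}): one has $\mult_{\widetilde F}\|\pi^*L-tE\|\geq t-p/q$ for $t\geq p/q$. This is a lower bound on the coefficient of $\widetilde F$ in $N(t)$ that makes no appeal to orthogonality, to $F^2$, or to $\widetilde F$ being the sole component. Plugging it into $\beta(t)=t-\sum_j(\textup{coeff of }\og_j)\,m_j\leq t-q(t-p/q)=(1-q)t+p$ gives exactly the supporting line of the segment $DR$, and one concludes by concavity of the upper boundary. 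The same estimate with $q=1$ gives $\beta(t)\leq\epsilon(L;x)$ for $t\geq\epsilon(L;x)$, yielding (2b). You should therefore replace the Zariski-orthogonality step and the $F^2\geq 0$ input with Nakamaye's multiplicity bound.
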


\subsection{Singular divisors at arbitrary points} Our purpose  here  is to find an explicit connection between the Euclidean geometry of  infinitesimal Newton--Okounkov polygons and the existence of singular divisors at a given point. Write
\[
\Lambda \deq \{ (t,y)\in \RR^2\ | \ t\geq 2, y\geq 0, \textup{ and }t\geq 2y\}\ . 
\]
Our main result is
\begin{theorem}\label{thm:nopolygon}
Let $B$ be an ample $\QQ$-divisor on $X$. Then we have the following. 
\begin{enumerate}
 \item If \begin{equation}\label{eq:nopolygon1}
\textup{interior of }\big(\Delta_{(E,z)}(\pi^*B) \ \bigcap \ \Lambda\big) \ \neq \ \varnothing, \forall z\in E\ ,
\end{equation}
then there exists an effective $\QQ$-divisor $D\equiv (1-c)B$ for any $0<c\ll 1$ such that $\sJ(X;D)=\sI_{x}$ in a neighborhood of the point $x$.
\item Let us assume that  $(B^2)\geq 5$. If 
\begin{equation}\label{eq:nopolygon2}
\textup{interior of }\big(\Delta_{(E,z_0)}(\pi^*B)\ \bigcap \ \Lambda\big) \ =\ \varnothing\ ,
\end{equation}
for some point $z_0\in E$, then the Seshadri constant $\epsilon(B;x)\leq \frac{5-\sqrt{5}}{2}$.
\end{enumerate}
\end{theorem}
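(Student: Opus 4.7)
For part~(1), the plan is to convert the hypothesis on the infinitesimal Newton--Okounkov polygon into an effective $\QQ$-divisor with controlled multiplier ideal.  An interior point $(t_0, y_0)$ of $\Delta_{(E,z)}(\pi^*B) \cap \Lambda$ has $t_0 > 2$ and $y_0 < t_0/2$, and by definition of the Newton--Okounkov polygon it is realised by an effective $\QQ$-divisor $F \equiv \pi^*B$ on $X'$ with $v_E(F) \geq t_0$ and $v_z(F) \leq y_0$.  Since the interiority is assumed at every $z \in E$, one may arrange these valuations uniformly: take a generic $F$ first, then perturb to accommodate the finitely many exceptional flags of Proposition~\ref{prop:propinf}(2).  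Pushing down by $\pi_*$ yields an effective $\QQ$-divisor $D_0 \equiv B$ on $X$ with $\mult_x(D_0) \geq t_0 > 2$; choosing $0 < c \ll 1$ with $(1-c)t_0 > 2$, the rescaled divisor $D := (1-c)D_0 \equiv (1-c)B$ has $\mult_x(D) \in (2, 3)$.

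To verify $\sJ(X; D) = \sI_{X,x}$ near $x$, I pass to a log resolution $\mu \colon Y \to X$ factoring through $\pi$ and separating $\hat D$ from $E$.  On $X'$ the exceptional contribution of $E$ to $K_{X'/X} - \lfloor \pi^*D \rfloor$ equals $1 - \lfloor \mult_x(D)\rfloor = -1$, and $\pi_* \sO_{X'}(-E) = \sI_{X,x}$ delivers the desired factor.  What must be ruled out is further shrinkage of the ideal coming from components of $\hat D$ meeting $E$ or from exceptional divisors above $\hat D \cap E$.  Here the strict inequality $y_0 < t_0/2$ together with the $(1-c)$ rescaling bounds the coefficients of components of $\hat D$ along $E$ by $(1-c)\cdot 1 < 1$ (taking the underlying section to be reduced) and keeps every secondary log discrepancy strictly positive, so that these higher exceptional divisors contribute trivially to the floor.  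This discrepancy bookkeeping is the main technical obstacle, and it is precisely what requires \emph{interiority} in the hypothesis rather than mere intersection with $\Lambda$.

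For part~(2), the plan is a convex-geometric extremal argument.  Set $\xi := \epsilon(B;x) = \xi(B;x)$; by Theorem~\ref{thm:Seshadri inverted simplex} the polygon $P := \Delta_{(E, z_0)}(\pi^*B)$ contains $\iss{\xi}$, by Proposition~\ref{prop:propinf}(1) it lies in $\{0 \leq y \leq t\}$, and by the area identity for Newton--Okounkov bodies $\Area(P) = (B^2)/2 \geq 5/2$.  Since the interior of $\iss{\xi}$ meets the interior of $\Lambda$ whenever $\xi > 2$, the avoidance hypothesis first forces $\xi \leq 2$.  Writing the lower boundary of $P$ as a convex function $g(t)$ with $g \equiv 0$ on $[0, \xi]$, the emptiness of the interior of $P \cap \Lambda$ translates into $g(t) \geq t/2$ for $t > 2$ wherever $P$ has non-degenerate vertical slice, and in particular $g(2) \geq 1$ by continuity.

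Convexity of $g$ together with $g(\xi) = 0$ and $g(2) \geq 1$ yields the secant-line estimate $g(t) \geq \frac{t - \xi}{2 - \xi}$ for every $t \geq \xi$.  Combined with the upper bound $y \leq t$, the polygon $P$ must close no later than the abscissa $t_{\max} = \xi/(\xi - 1)$ (the solution of $\frac{t-\xi}{2-\xi} = t$, valid for $\xi > 1$), and a direct integration gives
\[
\frac{(B^2)}{2} \ = \ \Area(P) \ \leq \ \int_0^{\xi/(\xi-1)} \bigl(t - g(t)\bigr)\, dt \ = \ \frac{\xi^2}{2(\xi - 1)} \ .
\]
Rearranging and using $(B^2) \geq 5$ gives $\xi^2 - 5\xi + 5 \geq 0$, whose roots $\frac{5\pm\sqrt{5}}{2}$ together with the bound $1 < \xi \leq 2 < \frac{5+\sqrt{5}}{2}$ force $\xi \leq \frac{5 - \sqrt{5}}{2}$.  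The sub-case $\xi \leq 1$ is automatically within this bound since $\frac{5-\sqrt{5}}{2} > 1$.
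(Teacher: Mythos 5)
The proposal misses the paper's key mechanisms in both parts, and the gaps are genuine.

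\textbf{Part (1).} You attempt to build a single effective $\QQ$-divisor $D$ with $\mult_x(D)\in(2,3)$ directly from a valuation vector, and then to close the multiplier-ideal computation by ``discrepancy bookkeeping,'' relying on the clause ``taking the underlying section to be reduced'' and the assertion that $y_0 < t_0/2$ ``keeps every secondary log discrepancy strictly positive.'' Neither step is justified: you have no freedom to choose a reduced section realizing a given valuation vector, and the condition $y_0 < t_0/2$ at one or even all flags does not control the singularities of $D$ beyond the first blow-up (in particular it does not preclude a tangency of the strict transform with $E$ that deepens the multiplier ideal after further blow-ups). The step ``take a generic $F$ first, then perturb to accommodate the finitely many exceptional flags'' is also unjustified, since the constraints at different $z\in E$ are not obviously compatible. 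The paper's proof uses a fundamentally different decomposition: set $B'=\pi^*((1-c)B)-2E$, take its Zariski decomposition $B'=P+N$, replace $P$ by a generic effective $\QQ$-divisor $P'=A_k+\frac{1}{k}N'$ via Fujita approximation, and apply the Koll\'ar--Bertini theorem so that the entire positive part contributes trivially; what remains is $\sJ(X',N)\otimes\sO_{X'}(-2E)$. The hypothesis~(\ref{eq:nopolygon1}) then enters in a sharp and precise form: the segment $\{2\}\times[0,1)$ meets $\Delta_{(E,z)}(\pi^*B)$ for all $z\in E$, whence (via \cite{LM}*{Theorem 6.4}) $\ord_z(N)<1$ along all of $E$, forcing $\sJ(X',N)$ to be trivial there. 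This is the mechanism your argument does not reproduce.

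\textbf{Part (2).} You correctly obtain $\xi\leq 2$ and $g(2)\geq 1$ for the lower boundary $g$ of the polygon, but the claimed secant-line estimate $g(t)\geq \frac{t-\xi}{2-\xi}$ ``for every $t\geq\xi$'' fails on the interval $\xi<t<2$: convexity of $g$ with $g(\xi)=0$ and $g(2)\geq 1$ gives $g(t)\leq\frac{t-\xi}{2-\xi}g(2)$ there, the \emph{opposite} inequality, so your bounding region does not contain the polygon and the area computation $\int_0^{\xi/(\xi-1)}(t-g(t))\,dt=\frac{\xi^2}{2(\xi-1)}$ is not a valid upper bound for $\Area(P)$. (If one repairs it naively by using only $g\geq 0$ on $(\xi,2)$, the resulting inequality gives $\xi\leq 3/2$, weaker than the stated bound.) The paper instead takes a genuine supporting line $\ell$ of the polygon at the slice $t=2$, lets $\delta\geq\xi$ denote its $t$-intercept, observes that the polygon is contained in the triangle cut out by $\ell$, the diagonal and the $t$-axis, and compares areas to obtain $\frac{\delta^2}{2(\delta-1)}\geq 5/2$, hence $\delta\leq\frac{5-\sqrt 5}{2}$ and therefore $\xi\leq\delta\leq\frac{5-\sqrt 5}{2}$. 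Replacing your chord through $(\xi,0)$ by a supporting line through $(\delta,0)$ with $\delta\geq\xi$ is exactly the missing step.
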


\begin{remark}
 Note that  conditions  (\ref{eq:nopolygon1}) and (\ref{eq:nopolygon2}) are complementary whenever  $(B^2)\geq 5$ is met. 
\end{remark}

\begin{remark}
In his seminal work \cite{D} Demailly introduced  Seshadri constants with the aim of controlling the asymptotic growth of separation of jets by an ample line bundle 
at the given point. Our  Theorem~\ref{thm:nopolygon}.(i) can be viewed  as a more effective version of Demailly's  idea as explained in 
\cite{PAGI}*{Theorem~5.1.17} and\cite{PAGI}*{Proposition~5.1.19}.

To see this  note that  $\epsilon(B;x)$ equals  the largest  $\lambda$ such that the   inverted simplex $\Delta_{\lambda}^{-1}$ is contained  in $\Delta_{(E,z)}(\pi^*B)$ 
for any $z\in E$ by Theorem~\ref{thm:Seshadri inverted simplex}. 
\end{remark}

Coupled with Nadel vanishing we obtain the following effective  global generation result, reminiscent of  Reider's theorem in the spirit of  Demailly's original 
line of thought\footnote{Corollary~\ref{cor:eff glob gen} also follows from Theorem~\ref{thm:nopolygon} and \cite{EL3}*{Proposition 1.4}, but the proof of the latter 
is more involved.}.

\begin{corollary}\label{cor:eff glob gen}
Let $X$ be a smooth projective surface, $x\in X$, $L$ an ample line bundle on $X$ with $(L^2)\geq 5$. 
If  $\epsilon(L;x)\geq \tfrac{5-\sqrt{5}}{2}$, then 
$x$ is not a base point of the adjoint linear series $|K_X+L|$.
\end{corollary}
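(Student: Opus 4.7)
The plan is to apply Theorem~\ref{thm:nopolygon}(1) to construct an effective $\QQ$-divisor $D$ with $\sJ(X;D)=\sI_x$ in a neighborhood of $x$, and then to invoke Nadel vanishing on $|K_X+L|$ to exhibit a global section nonvanishing at $x$.

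To set up Theorem~\ref{thm:nopolygon}, I would take $B=L$ whenever $\epsilon(L;x)>(5-\sqrt{5})/2$ strictly: the contrapositive of part~(2), combined with $(L^2)\geq 5$, guarantees that condition (\ref{eq:nopolygon1}) holds for every $z\in E$, and part~(1) then supplies an effective $\QQ$-divisor $D\equiv (1-c)L$ with $\sJ(X;D)=\sI_x$ near $x$, for any $0<c\ll 1$. In the borderline case $\epsilon(L;x)=(5-\sqrt{5})/2$, I would first replace $L$ by $B=(1+\delta)L$ for a small rational $\delta>0$: this ensures $(B^2)>5$ and $\epsilon(B;x)>(5-\sqrt{5})/2$, so applying part~(1) to $B$ yields $D\equiv (1-c)(1+\delta)L$. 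Choosing $c$ slightly larger than $\delta/(1+\delta)$ (which collapses to the trivial constraint $c>0$ when $\delta=0$) keeps $L-D\equiv \bigl(1-(1-c)(1+\delta)\bigr)L$ ample.

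Since $L-D$ is ample, hence big and nef, Nadel vanishing produces
\[
H^1(X,\sO_X(K_X+L)\otimes\sJ(X;D)) \equ 0.
\]
The long exact sequence associated to
\[
0\to \sO_X(K_X+L)\otimes\sJ(X;D)\to \sO_X(K_X+L)\to \sO_X(K_X+L)\otimes\sO_X/\sJ(X;D)\to 0
\]
then gives a surjection on $H^0$. Because $\sJ(X;D)=\sI_x$ in a neighborhood of $x$, composing further with the natural projection of $\sO_X(K_X+L)\otimes\sO_X/\sJ(X;D)$ onto its fibre $(K_X+L)(x)\cong\CC$ at $x$ yields a surjective evaluation $H^0(X,K_X+L)\twoheadrightarrow \CC$, so $x\notin\Bs|K_X+L|$.

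The only delicate point is the borderline case $\epsilon(L;x)=(5-\sqrt{5})/2$: Theorem~\ref{thm:nopolygon}(2) only delivers $\epsilon\leq (5-\sqrt{5})/2$ from the failure of (\ref{eq:nopolygon1}), so the non-strict hypothesis does not directly force (\ref{eq:nopolygon1}) for $B=L$. The mild perturbation $B=(1+\delta)L$ circumvents this at the cost of a simple bookkeeping bound on $c$ to preserve the ampleness of $L-D$; the remainder of the argument is the standard Nadel vanishing route from multiplier ideals to base-point freeness.
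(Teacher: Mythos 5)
Your overall route---invoke Theorem~\ref{thm:nopolygon}(1) to produce $D\equiv(1-c)L$ with $\sJ(X;D)=\sI_x$ near $x$, then feed this into Nadel vanishing to get a section of $K_X+L$ not vanishing at $x$---is exactly the argument the paper has in mind (``coupled with Nadel vanishing\dots''), and both the passage from Theorem~\ref{thm:nopolygon}(2) to condition~(\ref{eq:nopolygon1}) when $\epsilon(L;x)>\tfrac{5-\sqrt 5}{2}$ and the cohomological bookkeeping at the end are correct.

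The perturbation you propose for the borderline case, however, does not actually close the gap. If $\epsilon(L;x)=\tfrac{5-\sqrt 5}{2}$ and condition (\ref{eq:nopolygon1}) fails for $L$, then tracing equality through the proof of Theorem~\ref{thm:nopolygon}(2) forces $(L^2)=5$ and forces some $\Delta_{(E,z_0)}(\pi^*L)$ to be exactly the triangle with vertices $(0,0)$, $(\tfrac{5-\sqrt5}{2},0)$, $(\tfrac{5+\sqrt5}{2},\tfrac{5+\sqrt5}{2})$, which meets $\Lambda$ only at the single boundary point $(2,1)$. Since Newton--Okounkov bodies are homogeneous, $\Delta_{(E,z_0)}(\pi^*((1-c)(1+\delta)L))=(1-c)(1+\delta)\,\Delta_{(E,z_0)}(\pi^*L)$, and a short computation shows this scaled triangle meets $\intt\Lambda$ if and only if the scaling factor exceeds $1$. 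So the implicit upper bound ``$c\ll 1$'' in Theorem~\ref{thm:nopolygon}(1), applied to $B=(1+\delta)L$, is precisely $c<\tfrac{\delta}{1+\delta}$; but you also need $c>\tfrac{\delta}{1+\delta}$ so that $L-D\equiv(1-(1-c)(1+\delta))L$ is big and nef. These two requirements are exactly complementary, so there is no admissible $c$, and the perturbation fails.

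Fortunately the borderline case is vacuous, which is what rescues the statement. On a surface, if $\epsilon(L;x)<\sqrt{(L^2)}$ then $\pi^*L-\epsilon E$ is big and nef but lies on the boundary of the nef cone, hence vanishes against some irreducible curve $\bar C\neq E$, forcing $\epsilon(L;x)=(L\cdot C)/\mult_x(C)\in\QQ$. Since $\tfrac{5-\sqrt5}{2}$ is irrational and $\tfrac{5-\sqrt5}{2}<\sqrt5\leq\sqrt{(L^2)}$ (and $(L^2)=\bigl(\tfrac{5-\sqrt5}{2}\bigr)^2$ is not an integer, ruling out $\epsilon=\sqrt{(L^2)}$), the equality $\epsilon(L;x)=\tfrac{5-\sqrt5}{2}$ can never hold. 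Replacing your perturbation paragraph by this rationality observation, together with the strict-inequality argument you already have, gives a complete proof.
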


\begin{proof}[Proof of Theorem~\ref{thm:nopolygon}]
$(i)$  Let us fix a point $z\in E$, hence an infinitesimal flag $(E,z)$. We  show first that for any $0<c\ll 1$  condition $(\ref{eq:nopolygon1})$ implies 
the existence of a $\QQ$-effective divisor $D'\equiv (1-c)\pi^*B$ with $\sJ(X',D')|_U=\sO_U(-2E)$ for some open neighborhood $U$ of the exceptional divisor. 

Note that  $(\ref{eq:nopolygon1})$ is an open condition hence it is also satisfied 
for the divisor class $\pi^*((1-c)B)$ whenever $0<c\ll 1$. In what follows fix a rational number $c>0$ such that the above property holds, and set  $B'=\pi^*((1-c)B)-2E$. 
Condition $(\ref{eq:nopolygon1})$ yields that $\Delta_{(E,z)}(\pi^*B)$ contains an interior point $(t,y)\in\Lambda$ with $2\leq t<\mu_E(\pi^*B)$, therefore $\pi^*B-2E$ is 
a big $\QQ$-divisor on $X'$.  Also, by \cite{KL}*{Remark 1.7}  we know that 
\[
\Delta_{(E,z)}(B') \equ \Big(\Delta_{(E,z)}(\pi^*((1-c)B)) \ \cap \ \{(t,y) \ | \ t\geq 2\}\Big) \ -\ 2\eone\ ,
\]
where $\eone$ stands for the point $(1,0)$. 

Write $B'=P+N$ for  the   Zariski decomposition of $B'$; we will look for  the divisor $D'$ in the form
\[
D' \equ  P' + N +2E \ (\equiv \pi^*((1-c)B))\ ,
\]
with $P'\equiv P$ is an effective divisor\footnote{Note that the above expression is in general not the Zariski decomposition
of $D'$}. 

Since $P$ is big and nef, \cite{PAGI}*{Theorem 2.3.9} shows that 
one can find an effective divisor $N'$ and a sequence of ample $\QQ$-divisors $A_k$ with the property that  
$P=A_k+\tfrac{1}{k}N'$ for  $k\gg 0$. Choose $P'\equiv P$ to be an effective $\QQ$-divisor such that $A_k=P'-\tfrac{1}{k}N'$ is general and effective. This yields 
\[
\sJ(X',D') \equ  \sJ(X',A_k+\frac{1}{k}N'+N+2E) \equ  \sJ(X', \frac{1}{k}N'+N+2E) \equ  \sJ(X',N)\otimes \sO_{X'}(-2E)\ ,
\]
for $k\gg 0$. The second equality is an application of the Koll\'ar--Bertini theorem   \cite{PAGII}*{Example~9.2.29}, the third one 
comes from invariance under small perturbations (see \cite{PAGII}*{Example~9.2.30}) and  \cite{PAGII}*{Proposition 9.2.31}. 

Thus, it remains to check that  $\sJ(X',N)$ is trivial  at any point $z\in E$.  Since 
\[
\Delta_{(E,z)}(B') \ \bigcap \ \{0\}\times [0,1)\ \neq \ \varnothing,\ \forall z\in E\ ,
\]
\cite{LM}*{Theorem 6.4} implies 
\[
1 \ > \ \ord_z(N|_E), \forall z\in E\ .
\]
On the other hand,  $\ord_z(N|E)\geq \ord_z(N)$  yields $\ord_z(N) < 1$, for all $z\in E$. In the light of  \cite{PAGII}*{Proposition 9.5.13} this implies that $\sJ(X',N)$ is trivial at any $z\in E$, as  needed.

By  Lemma~\ref{lem:going down} there exists an effective $\QQ$-divisor $D\equiv (1-c)B$ on $X$ with $D'=\pi^*D$.  Hence the birational transformation rule for multiplier ideals (see \cite{PAGII}*{Theorem~9.2.33}) we have the sequence of 
equalities 
\[
\sJ(X,D) \equ \pi_*\big(\sO_{X'}(K_{X'/X})\otimes \sJ(X',D')\big) \equ \pi_*(\sJ(X',N))\otimes \sI_x
\]
where $K_{X'/X}=E$. Since $\sJ(X',N)$ is trivial at any $z\in E$, this means that  $\sJ(X,D)=\sI_x$ in a neighborhood of the point $x\in X$.

\noindent
$(ii)$  Set $\epsilon\deq \epsilon(B;x)$ and  observe that  \cite{KL}*{Theorem D} yields the containment
\[
\triangle OAA'  \dsubseteq \Delta_{(E,z_0)}(\pi^*B), \text{ where } O\equ (0,0), A\equ (\epsilon,0), \text{ and } A'\equ (\epsilon,\epsilon)\ .
\]
If $\epsilon>2$, then 
\[
\intt \big(\Delta_{(E,z_0)}(\pi^*B)\ \bigcap \ \Lambda\big) \dsupseteq \intt \big(\triangle OAA'\cap \Lambda\big) \neq \varnothing
\]
contradicting condition (\ref{eq:nopolygon2}). The case $\epsilon=2$ is equally impossible since that would imply via  (\ref{eq:nopolygon2}) that 
$\Delta_{(E,z_0)}(\pi^*B)$ lies to the left of the  line $t=2$;   as it lies underneath the diagonal anyway, it would have volume at most $2$, 
contradicting $(B^2)\geq 5$. Therefore we can safely assume $\epsilon<2$. 

Now, condition~$(\ref{eq:nopolygon2})$ implies that the segment $\{2\}\times [0,1)$ does not intersect  $\Delta_{(E,z_0)}(\pi^*B)$. Since the latter is convex, it must lie above some  line $\ell$ that passes through the point $(2,1)$ and below the diagonal. 
Let $(\delta,0)$ be the point of intersection of  $\ell$ and  the $t$-axis. It follows  that $\delta\geq \epsilon$, since we know that the inverted simplex $\triangle OAA'$ is contained in $\Delta_{(E,z_0)}(\pi^*B)$. So, our  goal is now to find 
an upper bound on $\delta$. 

We can assume that both $\epsilon,\delta>1$, since $\epsilon\leq 1$ already implies our statement. In this case we have $\tfrac{1}{2-\delta}>1$ for the slope of the line $\ell$, therefore the diagonal and $\ell$ intersect at the  point  
$(\tfrac{\delta}{\delta-1},\tfrac{\delta}{\delta-1})$ in the first quadrant. The triangle formed by $\ell$, the diagonal,  and the $t$-axis contains our 
Newton--Okounkov polygon, therefore its area is no smaller than the area of $\Delta_{(E,z_0)}(\pi^*B)\geq 5/2$. 
Hence, we obtain $\epsilon(B;x)\leq\delta\leq \frac{5-\sqrt{5}}{2}$. 

\end{proof}

\begin{lemma}\label{lem:going down}
 Let $Y$ be a smooth projective variety, $Z\subseteq X$ a smooth subvariety, $\pi\colon Y'\to Y$ the blowing-up of $Y$ along $Z$. Furthermore, let 
 $B$ be a Cartier divisor on $Y$, $D'$ a Cartier divisor on $Y'$. 
 If $D'\equiv \pi^*B$, then there exists a divisor $D\equiv B$ on $Y$ such that $D'\equ \pi^*D$. 
\end{lemma}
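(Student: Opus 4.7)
The plan is to set $D \deq \pi_*D'$ (pushforward of Weil divisors) and verify the two claims $\pi^*D \equ D'$ and $D \equiv B$. Since $Y$ is smooth, every Weil divisor is Cartier, so this produces a genuine Cartier divisor on $Y$. First I would decompose $D' \equ D'_{\mathrm{h}} + \sum_{i} b_i E_i$, where $E_1,\ldots,E_k$ denote the irreducible exceptional divisors of $\pi$ (one over each component $Z_i\subseteq Z$ of codimension $c_i\geq 2$; codimension-one components contribute nothing since $\pi$ is an isomorphism there), and $D'_{\mathrm{h}}$ contains no exceptional component. Then $D \equ \pi_* D'_{\mathrm{h}}$, as $\pi_*E_i \equ 0$. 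For each prime component $P$ of $D'_{\mathrm{h}}$, the image $Q_P \deq \pi(P)$ is a prime divisor on $Y$, and $\pi^*Q_P \equ P + \sum_{i}\mult_{Z_i}(Q_P)E_i$. Summing these identities with the coefficients from $D'$ produces an equation of the form
\[
\pi^*D - D' \equ \sum_{i} \beta_i E_i, \qquad \beta_i \in \QQ.
\]

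The main step is to use the hypothesis $D' \equiv \pi^*B$ to show that each $\beta_i$ vanishes. Fix $i$ and let $\ell_i \subseteq E_i$ be a line inside a general fiber of the $\PP^{c_i - 1}$-bundle $E_i \to Z_i$. Since $\pi$ contracts $\ell_i$ to a point, $\pi_*\ell_i \equ 0$, and the projection formula gives $\pi^*D \cdot \ell_i \equ \pi^*B \cdot \ell_i \equ 0$; combined with $D' \equiv \pi^*B$ this forces $(\pi^*D - D') \cdot \ell_i \equ 0$. Now $E_j \cap E_i \equ \varnothing$ for $j \neq i$ (exceptional divisors over distinct components of $Z$ are disjoint), while $\sO_{Y'}(E_i)|_{\ell_i} \cong \sO_{\PP^1}(-1)$ yields $E_i \cdot \ell_i \equ -1$. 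This gives $-\beta_i \equ 0$ for each $i$, hence $\pi^*D \equ D'$ as asserted.

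The numerical equivalence $D \equiv B$ then follows at once: $\pi^*D \equ D' \equiv \pi^*B$, together with the injectivity of $\pi^*\colon N^1(Y) \to N^1(Y')$ for any birational morphism of smooth projective varieties (classes can be detected against strict transforms of test curves), gives $D \equiv B$. The key conceptual input is the orthogonal decomposition of $N^1(Y')$ afforded by blowing up a smooth center; I do not foresee a serious obstacle, the only slightly delicate check being the fiber-line intersection computation, which relies on the classical $\sO(-1)$-bundle structure of the exceptional divisor over each $Z_i$.
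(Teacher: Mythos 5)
Your proof is correct and is essentially the paper's argument spelled out in full: the paper reduces to the (unproved) assertion that a numerically trivial Cartier divisor on $Y'$ descends to a numerically trivial divisor on $Y$, applied to $F' = D' - \pi^*B$, and your computation with fiber lines $\ell_i$ and the relations $E_j\cdot\ell_i = -\delta_{ij}$ is precisely how one establishes that assertion. Note moreover that your choice $D = \pi_*D'$ agrees with the paper's $D = B + \pi_*(D' - \pi^*B)$, since $\pi_*\pi^*B = B$ by the projection formula.
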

  
\begin{proof} This follows from  the fact that $F'\equiv 0$ for a Cartier divisor $F'$ on $Y'$ (with integral, $\QQ$-, or $\RR$-coefficients) implies $F'=\pi^*F$ 
for a numerically trivial divisor $F$ on $Y$. Indeed, apply this with $F'\deq D'-\pi^*B$ to obtain a numerically 
trivial divisor $F$ on $Y$ with $\pi^*F=D'-\pi^*B$, and set $D\deq B+F$. 
\end{proof}


\subsection{Singular divisors at very general points.}
Ever since the birth of the concept it has been an important guiding principle that local positivity of a line bundle is considerably easier to control
at a general or a very general point. This observation is manifestly present in the work of Ein--K\"uchle--Lazarsfeld \cite{EKL} (see also \cite{EL}),
where the authors  give a lower bound on Seshadri constants at very general points depending only on the dimension of the ambient space. 

Later, Nakamaye \cite{NakVeryGen} elaborated some of the ideas of \cite{EKL}, while translating them to the language of multiplicities. This thread was in turn
picked up in \cite{KL}, and further developed in the framework of infinitesimal Newton--Okounkov bodies of surfaces, as seen in Proposition~\ref{prop:genericinf}. It is hence not surprising that 
one can expect stronger-than-usual results on singular divisors at very general points.

\begin{theorem}\label{thm:verygeneric}
Let $p$ be a positive integer, $X$ a smooth projective surface, $L$ an ample line bundle on $X$ with $(L^2)\geq 5(p+2)^2$. 
Let  $x\in X$ be  a very general point, and assume  that there is no irreducible curve $C\subseteq X$ smooth at $x$ with $1\leq (L\cdot C)\leq p+2$. 

Then, for some (or,  equivalently, every) point $z\in E$
\begin{equation}\label{eq:verygeneric}
\length\Big(\Delta_{(E,z)}(\pi^*B)\cap \{2\}\times \RR\Big) \ > \ 1\ , 
\end{equation}
where as usual we write  $B\deq\frac{1}{p+2}L$.
\end{theorem}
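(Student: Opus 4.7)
The plan is to exploit the two structural alternatives for $\Delta_x(L)$ provided by Proposition~\ref{prop:genericinf}, combined with the convex geometry of the rescaling $\Delta_x(B) = \frac{1}{p+2}\Delta_x(L)$ and, in the critical sub-case, the explicit Zariski decomposition of $\pi^*B - tE$. Set $\epsilon \deq \epsilon(B;x) = \epsilon(L;x)/(p+2)$; then $\Delta_x(B)$ has area at least $5/2$ and contains $\iss{\epsilon}$, and for generic $z\in E$ its slice at height $t$ is $[0,h(t)]$ for a concave $h$ with $h(0)=0$, $h(\epsilon)=\epsilon$, and $h(t)\leq t$. The goal is to prove $h(2)>1$.

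When $\epsilon\geq 2$ the containment $\iss{\epsilon}\subseteq \Delta_x(B)$ alone gives $h(2)\geq 2$; this disposes of case~(1) of Proposition~\ref{prop:genericinf} (where $\epsilon = \sqrt{(B^2)}\geq\sqrt 5$) together with the sub-case $\epsilon\geq 2$ of case~(2). So assume $\epsilon<2$ and that a Seshadri-exceptional curve $F\subseteq X$ with $(L\cdot F)=r$, $\mult_x F=q$, $\epsilon(L;x)=r/q$ exists. Since $x$ is very general, $F$ moves in a family, hence $(F^2)\geq 0$.

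Suppose first $q=1$. Then $F$ is smooth at $x$, so by hypothesis $r\geq p+3$ and $\epsilon\geq (p+3)/(p+2)>1$. The Hodge index inequality $(L^2)(F^2)\leq r^2$, combined with $r<2(p+2)$ and $(L^2)\geq 5(p+2)^2$, forces $(F^2)<4/5$, hence $(F^2)=0$, and consequently $F'\deq \pi^*F - E$ is a $(-1)$-curve. The condition $P_t\cdot F' = 0$ then determines the Zariski decomposition $\pi^*B - tE = P_t + (t-\epsilon)F'$ on $[\epsilon,\mu']$, so by \cite{LM}*{Theorem~6.4} the slice length equals $(P_t\cdot E) = \epsilon$ constantly on this interval. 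Furthermore $(P_{\mu'})^2 = 0$ yields $\mu' = (B^2)/(2\epsilon) + \epsilon/2 > 2$ (since $(\epsilon-2)^2+1>0$), so $h(2) = \epsilon > 1$.

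Suppose now $q\geq 2$. Proposition~\ref{prop:genericinf}(2)(a) places $\Delta_x(B)$ inside the triangle with vertices $(0,0)$, $(\epsilon,\epsilon)$, $(q\epsilon/(q-1),0)$; comparing areas forces $\epsilon^2\geq 5(q-1)/q$, so combined with $\epsilon<2$ we have $q\in\{2,3,4\}$. On $[\epsilon,\mu']$, $h$ is concave with $h(\epsilon)=\epsilon$ and $h(t)\leq q\epsilon - (q-1)t$; the extremal profile along the upper edge maximises area, whence area $\geq 5/2$ gives
\[
\mu'\ \geq\ \mu'_-\deq \frac{q\epsilon - \sqrt{q\epsilon^2 - 5(q-1)}}{q-1}.
\]
Concavity of $h$ on $[\epsilon,\mu']$ together with $h(\mu')\geq 0$ supplies the chord lower bound $h(2) \geq \epsilon(\mu' - 2)/(\mu'-\epsilon)$, which exceeds $1$ exactly when $\mu' > \epsilon/(\epsilon-1)$. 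The main obstacle is verifying the tight algebraic inequality $\mu'_- > \epsilon/(\epsilon-1)$ for each $q\in\{2,3,4\}$ throughout $\epsilon\in [\sqrt{5(q-1)/q},2)$; this reduces to a polynomial inequality in $\epsilon$, tight precisely at $\epsilon=\sqrt{5(q-1)/q}$ (for instance $q=2$ compares $\mu'_-=\sqrt{10}$ with $\epsilon/(\epsilon-1)=(5+\sqrt{10})/3$), reflecting the near-sharpness of the numerical hypothesis $(L^2)\geq 5(p+2)^2$.
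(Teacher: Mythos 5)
Your proposal follows the same broad case division as the paper (large Seshadri constant, exceptional curve with $q\geq 2$, exceptional curve with $q=1$), and the first two cases are in reasonable shape, but there is a genuine gap in the third.

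\textbf{The serious gap ($q=1$).} You deduce $(F^2)=0$ and then assert that ``the condition $P_t\cdot F'=0$ determines the Zariski decomposition $\pi^*B-tE=P_t+(t-\epsilon)F'$ on $[\epsilon,\mu']$''. This is false in general: the orthogonality $(P_t\cdot F')=0$ only shows that \emph{if} $F'$ is the sole component of the negative part then its coefficient is $t-\epsilon$; it does not show that $P_t$ is nef, and other negative curves may well enter the negative part as $t$ increases past $\epsilon$. Consequently your conclusion $(P_t\cdot E)=\epsilon$ on $[\epsilon,2]$ is unjustified, and with it $h(2)=\epsilon>1$. This is precisely the dichotomy that the paper splits into Case 3(a) (only $\overline\Gamma_1$ appears, in which case your computation works) and Case 3(b) (further curves appear). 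Case 3(b) is the heart of the argument: it requires Lemma~\ref{lem:estimate for many curves}, which translates Nakamaye's very-general-point multiplicity inequality (itself resting on the differentiating-in-families technique of \cite{EKL}) into a slope bound of $\leq -1$ on the upper boundary of $\Delta_x(B)$ beyond $[A_1A_2]$, followed by an area comparison with the triangle $\triangle ADC$. Your proposal never invokes this machinery, so the case of additional negative curves is simply not addressed.

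\textbf{A secondary incompleteness ($q\geq 2$).} Your route here is genuinely different from the paper's. You derive $\mu'\geq\mu'_-$ from the truncated-triangle area and then use the concavity/chord bound $h(2)\geq\epsilon(\mu'-2)/(\mu'-\epsilon)$, reducing the claim to $\mu'_->\epsilon/(\epsilon-1)$ for $q\in\{2,3,4\}$; this is a reasonable and arguably more systematic strategy than the paper's similar-triangles argument at a fixed $z_0$, and the polynomial inequality does appear to hold throughout $\epsilon\in[\sqrt{5(q-1)/q},2)$ (at the left endpoint of $q=2$ the margin is small but positive, so ``tight precisely at $\epsilon=\sqrt{5(q-1)/q}$'' overstates it). However you explicitly leave the verification as ``the main obstacle'' rather than carrying it out, so the case is not closed in the proposal as written. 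Unlike the $q=1$ gap, this one is a finite calculation that could be filled in; it is not a conceptual hole.

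\textbf{Minor remarks.} You use $(F^2)\geq 0$ on the grounds that a Seshadri-exceptional curve through a very general point moves in a family. That is correct, but note that the paper takes a different route to $(\Gamma_1^2)=0$: Hodge index gives $(\Gamma_1^2)\leq 0$, while Proposition~\ref{prop:genericinf}(2)(b) (the $q=1$ part) forces the slope of $t\mapsto(P_t\cdot E)$ to be non-positive, which rules out $(\Gamma_1^2)<0$. Both reach the same endpoint. Finally, the transition $\epsilon(L;x)\in\{p+3,\ldots,2p+3\}$ (so $\epsilon(B;x)>1$) that you use in the $q=1$ case is essential and correct, but it is only an ingredient; without handling the multi-curve Zariski decomposition, the proof does not go through.
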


\begin{corollary}\label{cor:very general}
Under the assumptions of  Theorem~\ref{thm:verygeneric}, there always exists an effective $\QQ$-divisor $D\equiv (1-c)B$ for some $0<c\ll 1$
such  that $\sJ(X;D)=\sI_{X,x}$ in a neighborhood of the point $x$.
\end{corollary}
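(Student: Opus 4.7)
The plan is to deduce Corollary~\ref{cor:very general} by feeding the conclusion of Theorem~\ref{thm:verygeneric} into Theorem~\ref{thm:nopolygon}(i). The latter already produces the desired $\QQ$-divisor, so the only task is to verify its hypothesis: that for every $z\in E$,
\[
 \intt \bigl(\Delta_{(E,z)}(\pi^{*}B) \cap \Lambda\bigr) \ \neq \ \varnothing\ ,
\]
where $\Lambda=\{(t,y)\mid t\geq 2,\ 0\leq y\leq t/2\}$.

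First I would fix an arbitrary $z\in E$ and invoke Theorem~\ref{thm:verygeneric} to produce a vertical segment $\{(2,y)\mid a\leq y\leq b\}\subseteq \Delta_{(E,z)}(\pi^{*}B)$ with $b-a>1$. Since the polygon lies inside $\Delta^{-1}_{\mu'}$ by Proposition~\ref{prop:propinf}(i), one has $0\leq a$ and $b\leq 2$, forcing $a<1$; in particular $(a,1)\subseteq (a,b)$ is a non-empty open interval, and any $y_{0}\in (a,1)$ places $(2,y_{0})$ in the relative interior of the slice.

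Next I would promote $(2,y_{0})$ to an interior point of the polygon in the plane. The numerical hypothesis gives $(B^{2})\geq 5$, and comparing areas with the enclosing inverted simplex $\Delta^{-1}_{\mu'}$ of area $(\mu')^{2}/2$ forces $\mu'\geq \sqrt{5}>2$. Hence the polygon contains points with both $t<2$ and $t>2$, and a standard supporting-line argument confirms that a relative-interior point of a transversal vertical slice is a topological interior point of the polygon. A sufficiently small open ball around $(2,y_{0})$ is therefore contained in the polygon, and its portion with $t$-coordinate slightly larger than $2$ satisfies $y<1<t/2$, hence lies inside the interior of $\Lambda$. This provides the non-empty open subset required by Theorem~\ref{thm:nopolygon}(i).

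Applying that theorem to $B=\tfrac{1}{p+2}L$ now yields, for any $0<c\ll 1$, an effective $\QQ$-divisor $D\equiv (1-c)B$ with $\sJ(X;D)=\sI_{X,x}$ in a neighborhood of $x$, completing the proof. The only genuinely non-formal step is checking that the vertical slice at $t=2$ sits in the two-dimensional interior of the polygon rather than on its right edge; this is precisely where the quantitative assumption $(L^{2})\geq 5(p+2)^{2}$ is used, via the resulting bound $\mu'>2$.
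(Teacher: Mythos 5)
Your argument is correct and follows essentially the same route as the paper: the paper's own proof is a one-liner that cites the fact that infinitesimal Newton--Okounkov polygons lie under the diagonal and immediately concludes that condition (\ref{eq:nopolygon1}) holds. You are simply supplying the convex-geometric details that the paper leaves implicit, in particular the key observation that $(B^2)\geq 5$ forces $\mu'>\ 2$ (so that the slice at $t=2$ is transversal and a relative-interior point of it is a genuine interior point of the polygon), and the explicit choice of a point $(2+\delta,y_0)$ with $0<y_0<1$ in the interior of $\Lambda$. No gap.
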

\begin{proof}
By Proposition~\ref{prop:genericinf} any infinitesimal Newton--Okounkov polygon sits under the diagonal in $\RR^2$. Therefore $(\ref{eq:verygeneric})$ implies that condition $(\ref{eq:nopolygon1})$ from Theorem~\ref{thm:nopolygon} is satisfied, hence the claim. 
\end{proof}

\begin{remark}
As abelian surfaces are homogeneous, Theorem~\ref{thm:verygeneric} holds for all points on them. 
\end{remark}

Before proceeding with the proof, we make some preparations. Let $\epsilon=\epsilon(L;x)$, and write  $\epsilon_1 \deq \e(B;x)=\frac{1}{p+2}\epsilon$. 
As  Zariski decompositions of the divisors $\pi^*B-tE$ along the line segment $t\in [\epsilon_1,2)$ will play a decisive role, we will fix notation
for them as well. 

By \cite{KLM}*{Proposition 2.1}  there exist only finitely many curves $\og_1,\dots,\og_r$ that occur in the negative part of $\pi^*B-tE$ 
for any $t\in [\epsilon_1,2)$. Write  $\e_i$ for  the value of $t$ where $\og_i$ first appears. We can obviously assume
that $\e_1\leq\e_2\leq \ldots \leq \e_r$, in addition we put $\e_{r+1}=2$. Denote also by $m_i\deq (\og_i\cdot E) \equ\mult_x(\Gamma_i)$ for $1\leq i\leq r$, where $\Gamma_i=\pi(\og_i)$. 
For any $t\in[\epsilon_1 ,2)$ let $\pi^*B-tE=P_t+N_{\pi^*(B)-tE}$ be the Zariski decomposition of the divisor.

\begin{lemma}\label{lem:Nakamaye and Zariski}
With notation as above, we have 
\[
 N_{\pi^*B-tE} \equ \sum_{i=1}^{r}\mult_{\og_i}(\| \pi^*B-tE\|) \og_i
\]
for all $t\in [\e_1,2)$. 
\end{lemma}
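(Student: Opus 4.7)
The plan is to deduce the lemma from the standard numerical characterization of the negative part of a Zariski decomposition on a surface by asymptotic multiplicities, so the content is essentially to patch together definitions. By the very choice of $\og_1,\ldots,\og_r$ in the paragraph preceding the lemma, these are the only irreducible curves that can occur in the support of $N_{\pi^*B-tE}$ as $t$ varies over $[\epsilon_1,2)$. For each such $t$ we may therefore write
\[
N_{\pi^*B-tE} \equ \sum_{i=1}^r a_i(t)\,\og_i
\]
with non-negative coefficients $a_i(t)\geq 0$ (allowed to vanish when the corresponding curve does not actually appear). The lemma thus reduces to the identity $a_i(t)\equ \mult_{\og_i}(\|\pi^*B-tE\|)$ for every $i$ and every such $t$.

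The key step is to invoke the following general fact, valid for any big $\RR$-divisor $D$ on a smooth projective surface with Zariski decomposition $D=P+N$ and any prime divisor $\Gamma$: the asymptotic multiplicity $\mult_\Gamma(\|D\|)$ equals the coefficient of $\Gamma$ in $N$. The argument is short: the defining property of Zariski decomposition (the intersection matrix of $\Supp(N)$ is negative definite) implies that for $m$ sufficiently divisible every effective divisor in $|mD|$ writes uniquely as $s+mN$ with $s\in|mP|$; hence
\[
\mult_\Gamma(|mD|) \equ m\,\mult_\Gamma(N)+\mult_\Gamma(|mP|).
\]
Dividing by $m$ and passing to the limit yields $\mult_\Gamma(\|D\|)=\mult_\Gamma(N)+\mult_\Gamma(\|P\|)$, and the second summand vanishes because $P$ is big and nef and hence has empty diminished base locus. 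Applied to $D=\pi^*B-tE$ with $\Gamma=\og_i$, this is exactly the sought-after identity.

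The main (and mild) obstacle is just confirming that $\pi^*B-tE$ is genuinely big for every $t\in[\epsilon_1,2)$, so that both the Zariski decomposition and the asymptotic multiplicities are of the standard type. This is automatic under the hypothesis $(B^2)\geq 5$ operative in this section: indeed $(\pi^*B-tE)^2=(B^2)-t^2\geq 1$ for $t\leq 2$, so the pseudo-effective class $\pi^*B-tE$ has positive self-intersection on $X'$ and is therefore big.
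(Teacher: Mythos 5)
Your proposal is correct, and it follows the same route as the paper: reduce to the statement that for a big divisor $D$ with Zariski decomposition $D=P+N$ the asymptotic order $\mult_\Gamma(\|D\|)$ along a prime divisor $\Gamma$ equals the coefficient of $\Gamma$ in $N$. The paper simply asserts this fact (``It follows from the definition of asymptotic multiplicity and the existence and uniqueness of Zariski decomposition\dots''; note the phrase ``big and nef'' there is evidently a slip for ``big,'' since a big and nef divisor has trivial negative part), whereas you actually supply the short proof via the isomorphism $|mP|\xrightarrow{+mN}|mD|$ together with the vanishing of $\mult_\Gamma(\|P\|)$ for $P$ big and nef. Your closing bigness check of $\pi^{*}B-tE$ on $[\epsilon_1,2)$ is also fine: $(\pi^*B-tE)^2=(B^2)-t^2\geq 1$ and $(\pi^*B-tE)\cdot\pi^*A>0$ for $A$ ample on $X$, so the class is big; this is consistent with the paper's framing, where these classes lie in the range where $\Delta_{(E,z)}(\pi^*B)$ has positive width.
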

\begin{proof}
It follows from the definition of asymptotic multiplicity and the existence and uniqueness of Zariski decomposition that for an arbitrary big and 
nef $\RR$-divisor $D$ and an irreducible curve $C$, the expression $\mult_C \|D\|$ picks up the coefficient of $C$ in the negative part of $D$.    
\end{proof}

\begin{lemma}\label{lem:estimate for many curves}
With notation as above, if $x\in X$ is a very general point, then
\[
\length\Big(\Delta_{(E,z)}(\pi^*B)\cap\{t\}\times\RR\Big) \equ  (P_t\cdot E) \dleq l_i(t) \,\deq\, (1-\sum_{j=1}^{i}m_j)t + \sum_{j=1}^{i}\e_jm_j \ ,
\]
for all  $t\in [\epsilon_i,\epsilon_{i+1}]$ and all  $z\in E$.
\end{lemma}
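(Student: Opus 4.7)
Plan. The equality in the statement --- namely that the slice length of $\Delta_{(E,z)}(\pi^*B)$ at level $t$ equals $(P_t\cdot E)$ --- is the standard description of Newton--Okounkov polygons of surfaces along an infinitesimal flag in terms of Zariski decomposition (compare \cite{LM}*{Theorem 6.4} and \cite{KLM}*{Theorem B}); in particular the value is independent of $z\in E$. So the real work is in the inequality $(P_t\cdot E)\leq l_i(t)$. Using Lemma~\ref{lem:Nakamaye and Zariski} together with $(\pi^*B\cdot E)=0$, $(E^2)=-1$, and $(\og_j\cdot E)=m_j$, on the interval $t\in[\epsilon_i,\epsilon_{i+1}]$ --- where exactly $\og_1,\ldots,\og_i$ appear in the negative part --- I can write
\[
(P_t\cdot E) \equ t - \sum_{j=1}^i a_j(t)\,m_j, \qquad a_j(t)\deq \mult_{\og_j}(\|\pi^*B-tE\|),
\]
so the target inequality is equivalent to $\sum_{j=1}^i a_j(t) m_j \geq \sum_{j=1}^i m_j(t-\epsilon_j)$.

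I propose to prove this by induction on $i$, comparing the affine functions $g(t)\deq(P_t\cdot E)$ and $h(t)\deq l_i(t)$ on $[\epsilon_i,\epsilon_{i+1}]$. At $t=\epsilon_i$ the coefficient $a_i$ vanishes by continuity of Zariski decomposition, so $g(\epsilon_i)$ agrees with its value from the preceding interval; the inductive hypothesis then gives $g(\epsilon_i)\leq l_{i-1}(\epsilon_i)=l_i(\epsilon_i)$ (with the base case $i=1$ verified directly via $g(\epsilon_1)=\epsilon_1=l_1(\epsilon_1)$). It remains to show the slope comparison $g'(t)\leq h'(t)$ on $(\epsilon_i,\epsilon_{i+1})$. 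The defining relations $P_t\cdot\og_k=0$ produce the linear system $M\vec a(t)=\vec b(t)$ with $M_{jk}\deq(\og_j\cdot\og_k)$ and $b_k(t)\deq -m_k(t-\epsilon_k)$; writing $N\deq -M$ and $\vec m\deq(m_1,\dots,m_i)^{\!T}$, differentiating yields $\vec a'(t)=N^{-1}\vec m$, whence
\[
g'(t) \equ 1 - \vec m^{\,T}N^{-1}\vec m \quad\text{while}\quad h'(t)\equ 1-\vec m\cdot\vec 1.
\]
The inductive step therefore reduces to the purely linear-algebraic inequality $\vec m^{\,T}N^{-1}\vec m \geq \vec m\cdot\vec 1$.

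Two structural bounds, both relying on the very-generality of $x$, combine to yield this inequality. First, since $\Gamma_j$ and $\Gamma_k$ share the point $x$ with multiplicities $m_j,m_k$ for $j\neq k$, basic intersection theory gives $(\Gamma_j\cdot\Gamma_k)\geq m_j m_k$, so $N_{jk}=m_jm_k-(\Gamma_j\cdot\Gamma_k)\leq 0$; combined with the symmetric positive definiteness of $N$ (the intersection form on the negative part is negative definite), $N$ is a symmetric M-matrix and hence $N^{-1}$ has non-negative entries. Second, at a very general point every irreducible curve $\Gamma_j$ through $x$ satisfies $(\Gamma_j^2)\geq m_j(m_j-1)$: for $m_j\geq 2$ this is Nakamaye's theorem \cite{NakVeryGen} underlying \cite{KL}*{Proposition~4.2}, while for $m_j=1$ any such $\Gamma_j$ must move in a covering algebraic family, giving $(\Gamma_j^2)\geq 0$. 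Thus $N_{jj}=m_j^2-(\Gamma_j^2)\leq m_j$, and putting the two estimates together
\[
(N\vec 1)_j \equ m_j\sum_k m_k-\sum_k(\Gamma_j\cdot\Gamma_k) \dleq m_j, \qquad j=1,\dots,i,
\]
i.e.\ $\vec m - N\vec 1\geq 0$ componentwise. Setting $\vec w\deq N^{-1}\vec m$, the identity $N(\vec w-\vec 1)=\vec m-N\vec 1\geq 0$ together with $N^{-1}\geq 0$ forces $\vec w\geq\vec 1$, and therefore $\vec m^{\,T}N^{-1}\vec m = \vec m\cdot\vec w\geq\vec m\cdot\vec 1$, closing the slope comparison.

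The main technical obstacle I anticipate is ensuring the Nakamaye-type self-intersection bound $(\Gamma_j^2)\geq m_j(m_j-1)$ uniformly across \emph{every} curve $\og_j$ occurring in the negative part along the segment $[\epsilon_1,2)$, and in particular in the delicate case $m_j=1$, where Nakamaye's result does not apply directly and one must instead invoke the movement of curves through very general points to obtain $(\Gamma_j^2)\geq 0$.
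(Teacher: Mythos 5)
Your proof takes a genuinely different route from the paper's. The paper proves the inequality in one step: Nakamaye's Lemma (\cite{KL}*{Lemma 4.1}, which is \cite{NakVeryGen}*{Lemma 1.3} adapted to this setting) gives the \emph{termwise} bound $a_j(t)=\mult_{\og_j}(\|\pi^*B-tE\|)\geq t-\epsilon_j$ for every $j$ with $t\geq \epsilon_j$, so that $(P_t\cdot E)=t-\sum_j a_j(t)m_j\leq t-\sum_j(t-\epsilon_j)m_j$ immediately. Your argument instead aims only at the weaker \emph{aggregated} inequality $\sum_j a_j(t)m_j\geq\sum_j m_j(t-\epsilon_j)$ and derives it from scratch via piecewise-affine slope comparison, the linear system coming from $P_t\cdot\og_k=0$, and Stieltjes/M-matrix positivity of $N^{-1}$ combined with intersection-theoretic bounds at very general points. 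This is an interesting alternative, but it trades a single black-box citation for considerably more machinery (positive definiteness of the negative-part matrix, monotonicity of the negative-part support along the segment, $N^{-1}\geq 0$, and the EL-type self-intersection bound).

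There is, however, a genuine gap in the linear-algebra step for a general smooth projective surface, and it sits exactly where your last paragraph is looking but is slightly misplaced. The obstruction is not $m_j=1$ (your covering-family argument handles that case: any curve through a very general point $x$ moves, so $(\Gamma_j^2)\geq 0$). The problem is $m_j=0$. A curve $\og_j$ in the negative part of $\pi^*B-tE$ along $[\epsilon_1,2)$ need not meet $E$, i.e.\ $\Gamma_j$ need not pass through $x$. In fact if $(\Gamma_j^2)<0$ then $\Gamma_j$ is rigid, and since $x$ is very general it avoids all rigid negative curves, forcing $m_j=\mult_x(\Gamma_j)=0$. For such an index the $j$-th row sum is
\[
(N\vec 1)_j \equ -\,(\Gamma_j^2)\ -\sum_{k\neq j}(\Gamma_j\cdot\Gamma_k)\ ,
\]
which can be strictly positive (take $\Gamma_j$ disjoint from the other $\Gamma_k$), whereas $m_j=0$; so the crucial componentwise inequality $\vec m - N\vec 1\geq 0$ fails, and with it the deduction $\vec w=N^{-1}\vec m\geq\vec 1$. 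One cannot simply discard such $j$ either, because the vector $\vec a'(t)=N^{-1}\vec m$ is determined by the full system, so $a_j'$ for $j$ with $m_j>0$ depends on the rows with $m_j=0$. On an abelian surface there are no negative curves, so this never arises and your proof closes; but the lemma is asserted and used (via Theorem~\ref{thm:verygeneric}) for an arbitrary smooth projective surface, where it does not. The paper's route via Nakamaye's Lemma is immune to this because the termwise bound $a_j(t)\geq t-\epsilon_j$ holds for every curve in the negative part, independently of whether it passes through $x$, and the terms with $m_j=0$ then simply drop out of $\sum_j a_j(t)m_j$.
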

\begin{remark}
Lemma~\ref{lem:estimate for many curves} is essentially a restatement of \cite{NakVeryGen}*{Lemma 1.3} in the context of Newton--Okounkov bodies. Note that Nakamaye's 
claim is strongly based on \cite{EKL}*{\S 2} (see also  \cite{PAGI}*{Proposition~5.2.13}), providing a way 
of  ``smoothing divisors in affine families'' using differential operators. This observation will play  an important role in the proof of Theorem~\ref{thm:verygeneric} 
and consequently in that of  Theorem~\ref{thm:np}.
\end{remark}
 
Returning to the verification of  Lemma~\ref{lem:estimate for many curves}, let $A_i$ stand for  the point of intersection of the lines $t=\e_i$ and 
$l_i$ (here we identify the function $l_i$ with its graph). Note that the function $\ell\colon [\e_1,2]\rightarrow \RR_+$ given by $\ell(t) \deq l_i(t)$ on each interval $[\e_i,\e_{i+1}]$ is continuous, 
and satisfies $(\e_i,\ell(\e_i))=A_i$. Let $T_i$ be the polygon spanned by the origin, the points $A_1,\dots,A_i$, and the intersection point $F_i$  of the 
horizontal axis with the line $l_i$. 

\begin{remark}\label{rmk:estimate}
Since the upper boundary of the polygon $\Delta_{x}(B)$ is concave, we have 
\[
 \Delta_x(B) \ \dsubseteq \ T_i, \text{ for all } 1\leq i\leq r \ . 
\]
An explicit computation using the definition will convince that the subsequent slopes of the functions $l_i$  are decreasing, therefore 
one  has $T_i \dsupseteq T_{i+1}$ for all $1\leq i\leq r$ as well.
\end{remark}

\begin{proof}[Proof of Lemma~\ref{lem:estimate for many curves}]
It follows from \cite{KL}*{Remark 1.9} that 
\[
\length\Big(\Delta_{(E,z)}(\pi^*B)\cap\{t\}\times\RR\Big) \equ  (P_t\cdot E), \textup{ for all points } z\in E \ .
\]
The point  $x\in X$ was chosen to be very general, therefore one has  $\mult_{\overline{\Gamma}_i}(\|\pi^*(B)-t\og_i\|) \ \geq \ t-\epsilon_i$ 
for all $1\leq i\leq r$ and all $t\geq \e_i$ by Nakamaye's Lemma \cite{KL}*{Lemma 4.1} (see also \cite{NakVeryGen}*{Lemma 1.3}). As a consequence, 
we obtain via Lemma~\ref{lem:Nakamaye and Zariski} that 
\begin{eqnarray*}
 (P_t\cdot E) & = & \big( \big( (\pi^*B-tE) - \sum_{i=1}^{r}\mult_{\og_i}(\| \pi^*B-tE\|) \og_i \big) \cdot E\big) \\ 
 & \leq &  t - \sum_{j=1}^{i}(t-\e_j)m_j \equ (1-\sum_{j=1}^{i}m_j)t + \sum_{j=1}^{i}\e_jm_j
\end{eqnarray*}
 for all $\e_i\leq t< 2$, as required.
\end{proof}

\begin{proof}[Proof of Theorem~\ref{thm:verygeneric}]
We will subdivide the proof  into  several cases   depending on the size and nature  of the Seshadri constant $\epsilon\deq \epsilon(L;x)$.

\smallskip\noindent
\textit{Case 1:} $\epsilon(L;x)\geq 2(p+2)$. Observe that \cite{KL}*{Theorem D} yields
\[
\length \big(\Delta_{(E,z)}(\pi^*B)\cap \{2\}\times\RR\big) \ > \ 1 \ ,
\]
since $\epsilon_1=\epsilon(B;x)=\frac{1}{p+2}\epsilon\geq 2$,  and  $\Delta_{(E,z)}(\pi^*B) \supseteq \Delta^{-1}_{\epsilon(B;x)} \supseteq \Delta^{-1}_2$.

\smallskip\noindent
\textit{Case 2:} $\epsilon(L;x)=\frac{(L.F)}{\textup{mult}_x(F)}$ for some irreducible curve $F\subseteq X$ with 
$r\deq (L.F)\geq q\deq\mult_x(F)\geq 2$ (note that $r\geq q$ follows from the main result of \cite{EL}).

The point  $x\in X$ was chosen to be  very general, therefore  Proposition~\ref{prop:genericinf} implies  $\Delta_x(B)\subseteq \triangle OMN$, where 
$O=(0,0),M=(\frac{r}{(p+2)q},\frac{r}{(p+2)q)})$ and $N=(\frac{r}{(p+2)(q-1)},0)$. Since the area of the first polygon is $B^2/2\geq 5/2$, we obtain the inequality 
\[
  \frac{1}{(p+2)^2}\cdot \frac{r^2}{q(q-1)} \dgeq 5\ .
\]
Remembering that $q\geq 2$, we arrive at  the following lower bound on the Seshadri constant
\begin{equation}\label{eqn:6}
 \epsilon(B;x) \equ \frac{r}{(p+2)q} \dgeq \sqrt{5\zj{1-\frac{1}{q}}} \dgeq \sqrt{\frac{5}{2}}\ ,
\end{equation}
Next, fix a point $z_0\in E$, and  let $S=(\epsilon_1,\epsilon_1)$, $T=(\epsilon_1,0)$, $S'=(2,2)$ and $T'=(2,0)$. The triangle $\triangle OST$ is the largest inverted standard simplex  inside  $\Delta_{(E,z_0)}(\pi^*B)$. Let 
\[
[AD] \deq \Delta_{(E,z_0)}(\pi^*B)\cap \{2\}\times\RR  \ ,
\]
and aiming at a contradiction, suppose  $||AD||\leq 1$ (for a visual guiding see Figure~\ref{fig:2}).

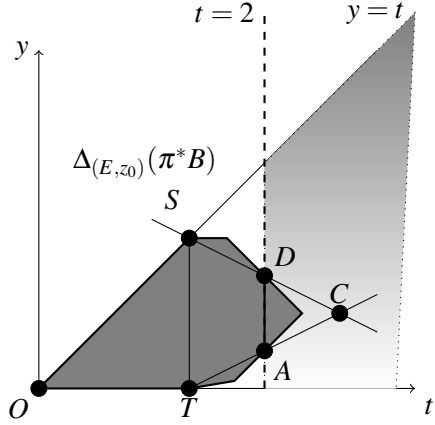
\begin{figure}
\begin{tikzpicture}
%
%
\draw [->]  (0,0) -- (5,0);   
\node [below right] at (5,0) {$t$};
\draw [->]  (0,0) -- (0,4.5); 
\node [left] at (0,4.5) {$y$}; 
\draw (0,0) -- (4.5,4.5); 
\node [left] at (5,5) {$y=t$};
\shadedraw [dotted,shading=axis] (3,0) -- (3,3) -- (5,5) -- (4.75,0);
\draw [fill=gray, thick] (0,0) -- (2,2) -- (2.5,2) -- (3,1.5) -- (3.5,1) -- (3,0.5) -- (2.6,0.1) -- (2,0) -- (0,0);  
\node [left] at (2.5,3) {$\Delta_{(E,z_0)}(\pi^*B)$}; 
\draw [thick, dashed] (3,0) -- (3,5);  
\node [left] at (3,5) {$t=2$}; 
\draw (2,0) -- (2,2); 
\draw [thick] (3,0.5) -- (3,1.5);  
\draw (1.5,2.25) -- (4.5,0.75); 
\draw (2,0) -- (4.5, 1.25); 
%
%
\node [below left] at (0,0) {$O$}; 
\filldraw[black]  (0,0) circle (3pt);
\node [below] at (2,0) {$T$}; 
\filldraw[black] (2,0) circle (3pt);
\node [above left] at (2,2.25)  {$S$}; 
\filldraw[black] (2,2) circle (3pt); 
\node [above right] at (3,1.5) {$D$}; 
\filldraw [black]  (3,1.5) circle (3pt); 
\node [below  right] at (3,0.5) {$A$}; 
\filldraw[black]  (3,0.5) circle (3pt);
\node [above] at (4,1) {$C$}; 
\filldraw[black] (4,1) circle (3pt); 
\end{tikzpicture}
\caption{$\Delta_{(E,z_0)}(\pi^*B)$ and the triangles $\triangle STC$ and $\triangle ADC$}\label{fig:2}
\end{figure}  

Note first that by $(\ref{eqn:6})$, one has $\epsilon_1>1$ and thus $||ST||>1$. Together with the assumption  $||AD||\leq 1$,  this implies that the lines $TA$ and $SD$ 
intersect to the right of  the vertical line $t=2$, let us call the point of intersection  $C$. Denote by $x=\dist(C, t=2)$. Since both line segments $[TA]$ and $[SD]$ 
are contained in $\Delta_{(E,z_0)}(\pi^*B)$, convexity yields the inclusion
\[
\triangle ADC \ \supseteq \ \Delta_{(E,z_0)}(\pi^*B)_{t\geq 2} \deq  \Delta_{(E,z_0)}(\pi^*B) \ \cap \ \{t\geq 2\}\times\RR \ .
\]
An area comparison yields the  string of inequalities
\begin{align*}
\Area(\triangle ADC) \ &\geq \ \Area(\Delta_{(E,z_0)}(\pi^*(B))_{t\geq 2} \equ   \Area(\Delta_{(E,z_0)}(\pi^*(B)) \ - \ \Area(\Delta_{(E,z_0)}(\pi^*(B))_{t\leq  2} \\
& \geq \frac{\vol_X(B)}{2} -\Area(\triangle OS'T') \dgeq  \frac{5}{2}-\frac{4}{2} \equ  \frac{1}{2} \ .
\end{align*}
By  the similarity between $\triangle ADC$ and $\triangle TSC$, we see that  
\[
||AD|| \equ  \frac{\epsilon_1 x}{x+2-\epsilon_1}\ .
\]
Along  with the condition  $||AD||\leq 1$ this  implies
\begin{equation}\label{eqn:7}
\frac{x+2}{x+1} \ \geq \ \epsilon_1 \ .
\end{equation}
On the other hand, by the above we also have 
\[
\Area(\triangle ADC) \equ  \frac{\epsilon_1 x^2}{2(x+2-\epsilon_1)} \ \geq \ \frac{1}{2} \ ,
\]
which gives 
\[
\epsilon_1  \dgeq  \frac{x+2}{x^2+1} \ .
\]
Combining this inequality with $(\ref{eqn:7})$, we arrive at 
\[
\frac{x+2}{x+1} \dgeq  \frac{x+2}{x^2+1} \ ,
\]
forcing $x\geq 1$. Since the function $f(x)=\frac{x+2}{x+1}$ is decreasing for $x\geq 1$, the inequality $(\ref{eqn:7})$ implies that $\epsilon_1 \leq f(1)=3/2$, 
contradicting   $\epsilon_1\geq \sqrt{5/2}$ from  $(\ref{eqn:6})$. Thus $||AD||>1$, as required. 

\smallskip\noindent
\textit{Case 3:} Here we assume that $\epsilon=\epsilon(L;x)\in \NN$ with $1\leq \epsilon\leq 2p+3$, and that there exists an irreducible curve $\Gamma_1\subseteq X$ with $\mult_x(\Gamma_1)=1$ and $\epsilon(L;x)=(L\cdot \Gamma_1)=\epsilon$. Denote by $\og_1$ the proper transform via $\pi$ of $\Gamma_1$.

We point out that when  $1\leq \epsilon\leq p+2$, our assumptions  reduces to the condition in the statement of our theorem with  $\Gamma_1$ playing the role of  $C$.
Whence, in what follows  we can assume that in addition $p+3\leq \epsilon\leq 2p+3$. Secondly, since $\mult_x(\Gamma_1)=1$, $(L\cdot \Gamma_1)=\epsilon\leq 2p+3$,
and $L^2\geq 5(p+2)^2$, the Hodge Index Theorem yields  $(\Gamma_1^2)\leq 0$.

Since the point $x\in X$ is  very general, Proposition~\ref{prop:genericinf} (2.b) shows  that the slope of the linear function $t\longmapsto (P_t.E)$ is non-positive. 
By $(\ref{eq:positivepart})$,  the only way  this can happen is  when $m_1=1$ and $(\Gamma_1^2)=0$ (since  $(\Gamma_1^2)\leq 0$).

We consider two sub-cases.

\smallskip\noindent
\textit{Case 3(a):} $\og_1$  is the only curve appearing in the negative part of the divisor $\pi^*B-tE$ for any $t\in [\epsilon_1,2)$. 

Note first that by above we have that $\mult_x(\Gamma_1)=1$ and thus $\og_1=\pi^*\Gamma_1-E$. Since the curve $\og_1$ is 
the only curve appearing in the negative part of $\pi^*B-tE$ for any $t\in [1,2)$,  we can apply the algorithm for finding the negative part of the Zariski decomposition 
for each divisor $\pi^*B-tE$ and deduce that
\[
\pi^*B-tE \equ  P_t \ + \ (t-\epsilon_1)\overline{\Gamma}_1
\]
is indeed the appropriate Zariski decomposition for  any $t\in [\epsilon_1,2)$. In particular, we obtain that  
\begin{equation}\label{eq:positivepart}
(P_t.E) \equ  \epsilon_1\ \ \ \text{for all $t\in [\epsilon_1,2)$.}
\end{equation}

 Having positive self-intersection  $\pi^*B-tE$ is big for all $t\in [\epsilon_1,2]$, therefore the Zariski decomposition along this  line segment 
is continuous by \cite{BKS}*{Proposition 1.16}.  Accordingly,  $(\ref{eq:positivepart})$ yields 
\[
\length\big(\Delta_{(E,z)}(\pi^*B)\cap \{2\}\times\RR\big) \ \equ \ (P_2.E) \ \equ \ \epsilon_1 \ \equ \ \frac{\epsilon}{p+2} \ > \ 1 
\]
for  we assumed that $\epsilon\in\st{p+3,\ldots ,2p+3}$. 

\smallskip\noindent
\textit{Case: 3(b)} Assume  that the negative part of $\pi^*B-tE$ contains other  curve(s)  beside  $\overline{\Gamma}_1$ for some  $t\in [\epsilon_1,2)$. 
\\

Denote these other curves that appear in the negative part of the segment line $\pi^*B-tE$ for $t\in [\epsilon, 2)$ by $\og_2,\ldots ,\og_r$ for some $r\geq 2$. Our main tool is going to be  the generic infinitesimal Newton--Okounkov polygon $\Delta_{x}(B)$. An important property of $\Delta_x(B)$  is that the vertical line segment $\Delta_{x}(B)\cap \{t\}\times\RR$ starts on the $t$-axis for any $t\geq 0$ (see \cite{KL}*{Theorem 3.1}).
 
Aiming at a contradiction, suppose  that $\length(\Delta_{x}(B)\cap\{2\}\times\RR)\leq 1$. Note first that  this is equivalent to having the point $(2,1)$ outside the interior of the polygon $\Delta_{x}(B)$, as its lower boundary  sits on the $t$-axis.

As  $x\in X$ was chosen to be a very general point, by Remark~\ref{rmk:estimate} we have $\Delta_{x}(B) \subseteq T_2$, where the latter polygon denotes the convex hull  
of  the vertices  $O,A_1,A_2,F_2$, where   $O=(0,0)$ and $A_1=(\epsilon_1,\epsilon_1)$. We will focus on  the slope of the line $A_2F_2$. 
If  $A_2=A_1$  then by Lemma~\ref{lem:estimate for many curves}, we know that 
\[
 \text{slope of}\ A_2F_2 \dleq \text{slope of }\ \ell (t) \equ (1-m_1-m_2)t+\epsilon_1m_1+\epsilon_2m_2\ .
\]
In particular, the  slope of $A_2F_2$ is at most  $-1$, since $m_1=1$ and $m_2\geq 1$.

In the non-degenerate case $A_1\neq A_2$, since we have $m_1=1$, then 
\[
 \text{slope of }\ A_2F_2 \dleq \text{slope of }\ \ell (t) \equ -m_2t+\epsilon_1+\epsilon_2m_2 \dleq -1
\]
again. Note also that in this case, based on the proof of $Case~3(a)$, we know for sure that the segment $[A_1A_2]$ is actually an edge of the convex polygon $\Delta_{x}(B)$.

Since  the upper boundary of the polygon $\Delta_x(B)$ is concave by \cite{KLM}*{Theorem B}, then  all the supporting lines of the edges on this boundary, besides $[OA_1]$ and 
$[A_1A_2]$, have slope at most  $-1$. However, we initially assumed  that $(2,1)\notin\intt\Delta_x(B)$, thus convexity yields  that the first edge of the polygon 
$\Delta_{x}(B)$ intersecting  the region $[2,\infty)\times\RR$  will  do so at a point on the line segment
  $\{2\}\times[0,1]$. Furthermore, this edge and all the other edges of the upper boundary in the region $[2,\infty)\times \RR$ will have slope at most  $-1$. 

Now, set  $A=(2,1), C=(3,0)$ and $D=(2,0)$. Since the line $AC$ has slope $-1$, then by what we said just above, we have the following inclusions due to convexity reasons
\[
\Delta_x(B)\cap [2,\infty)\times\RR  \dsubseteq  \triangle ADC\ .
\]
Write  $R=(2,2)$; since  $\Area(\Delta_x(B))\geq 5/2$, the above  inclusion implies 
\[
\Area(\triangle ADC) \equ  \frac{1}{2} \ > \ \Area(\Delta_x(B)) - \Area(\triangle ODR)  \geq \frac{5}{2}-2 \equ  \frac{1}{2}
\]
contradicting $(2,1)\notin\intt \Delta_{x}(B)$.  In particular, $\length (\Delta_x(B)\cap\{2\}\times\RR)>1$,  and we are done. 
\end{proof}

\section{Syzygies of abelian surfaces}

\subsection{Syzygies and singular divisors on abelian surfaces}
The essential contribution of the work \cite{LPP} can be summarized in the following statement. 

\begin{theorem}(\cite{LPP})\label{thm:LPP}
Let $X$ be an abelian surface, $L$ an ample line bundle on $X$, and $p$ a positive integer such that there exists an effective $\QQ$-divisor $F_0$ on $X$ satisfying
\begin{enumerate}
 \item $F_0 \equiv \frac{1-c}{p+2} L$ for some $0<c\ll 1$, and 
 \item $\sJ(X;F_0)\equ \sI_0$, the maximal ideal at the origin. 
\end{enumerate}
Then $L$ satisfies the property $(N_p)$. 
\end{theorem}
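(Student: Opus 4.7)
My plan is to follow Lazarsfeld--Pareschi--Popa~\cite{LPP} closely; indeed the $p=0$ case is sketched in the introduction. The strategy has three parts: (i) reduce property $(N_p)$ to a single cohomological vanishing on the Cartesian power $X^{p+2}$ involving the ideal sheaf of a union of diagonals; (ii) realize that vanishing via Nadel's theorem applied to a divisor built from $F_0$ by pullback along difference morphisms; (iii) combine (i) and (ii).

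\emph{Step 1: the cohomological criterion.} First I would invoke the Kempf--Green criterion used in~\cite{LPP}. Let $W \subseteq X^{p+2}$ denote the reduced union of the $p+1$ diagonals $\Delta_{i,\,p+2}$ for $i=1,\dots,p+1$. Then property $(N_p)$ for $L$ is implied by the vanishing
\[
 H^1\!\bigl(X^{p+2},\; \pr_1^{*}L \otimes \cdots \otimes \pr_{p+1}^{*}L \otimes \pr_{p+2}^{*}L^{\otimes h} \otimes \sI_W\bigr) \equ 0 \qquad (h \geq 1).
\]
For $p=0$ this reduces to Inamdar's criterion on $X \times X$ recalled in the introduction.

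\emph{Step 2: pullback of $F_0$ along difference maps.} Next I would exploit the group structure on $X$. For $i = 1, \dots, p+1$, the difference morphism $\delta_i \colon X^{p+2} \to X$, $(x_1,\dots,x_{p+2}) \mapsto x_i - x_{p+2}$, is a smooth surjection whose fibre over $0 \in X$ is precisely $\Delta_{i,\,p+2}$. Compatibility of multiplier ideals with smooth pullback (cf.\ \cite{PAGII}*{Theorem~9.5.35}) together with $\sJ(X,F_0) = \sI_0$ gives
\[
 \sJ\bigl(X^{p+2},\; \delta_i^{*}F_0\bigr) \equ \delta_i^{*}\sI_0 \equ \sI_{\Delta_{i,\,p+2}}.
\]
Defining the $\QQ$-divisor $F \deq \sum_{i=1}^{p+1}\delta_i^{*}F_0$ on $X^{p+2}$, subadditivity yields $\sJ(X^{p+2},F) \subseteq \sI_W$; conversely, since the $\Delta_{i,\,p+2}$ meet transversely away from the triple-diagonal locus, only the single summand $\delta_i^{*}F_0$ is singular near a generic point of $\Delta_{i,\,p+2}$, so a local computation reduces the equality $\sJ(X^{p+2},F) = \sI_W$ to the hypothesis on $F_0$.

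\emph{Step 3: Nadel vanishing.} By the theorem of the cube on $X$ one has the numerical equivalence $\delta_i^{*}L \equiv \pr_i^{*}L + \pr_{p+2}^{*}L$ on $X^{p+2}$. Since $F_0 \equiv \tfrac{1-c}{p+2}L$, this forces
\[
 F \equiv \frac{1-c}{p+2}\bigl(\pr_1^{*}L + \cdots + \pr_{p+1}^{*}L + (p+1)\pr_{p+2}^{*}L\bigr),
\]
so for every $h \geq 1$ the class $\pr_1^{*}L + \cdots + \pr_{p+1}^{*}L + h\,\pr_{p+2}^{*}L - F$ is $\QQ$-ample once $0 < c \ll 1$. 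Because $K_{X^{p+2}}$ is trivial, Nadel vanishing yields
\[
 H^1\!\bigl(X^{p+2},\; \pr_1^{*}L \otimes \cdots \otimes \pr_{p+1}^{*}L \otimes \pr_{p+2}^{*}L^{\otimes h} \otimes \sJ(X^{p+2}, F)\bigr) \equ 0,
\]
and the identification $\sJ(X^{p+2}, F) = \sI_W$ from Step~2 together with the criterion of Step~1 delivers property $(N_p)$.

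\emph{Main obstacle.} The most delicate point is the multiplier-ideal identity $\sJ(X^{p+2},\sum_i \delta_i^{*}F_0) = \sI_W$. Subadditivity gives only one inclusion; the reverse requires a careful local analysis along each component $\Delta_{i,\,p+2}$, relying on the fact that $\sJ(X,F_0) = \sI_0$ is an equality and that the pairwise diagonals meet transversely off the triple diagonals. A secondary (but routine) point is upgrading the numerical equivalence $\delta_i^{*}L \equiv \pr_i^{*}L + \pr_{p+2}^{*}L$ to something usable in Nadel vanishing: on an abelian surface this is handled by replacing $L$ by a translate, which has no effect on any of the multiplier-ideal statements above.
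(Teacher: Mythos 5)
Your overall strategy matches the paper's and Steps 1 and 3 are fine (the numerical bookkeeping in Step 3 is correct: for $0<c\ll 1$ and all $h\geq 1$ the class $\pr_1^*L+\cdots+\pr_{p+1}^*L+h\,\pr_{p+2}^*L-F$ is indeed $\QQ$-ample, and triviality of $K_{X^{p+2}}$ lets Nadel apply). You have also correctly identified the delicate point, namely the identity $\sJ(X^{p+2},F)=\sI_W$. However, the way you propose to handle it has a genuine gap. Subadditivity gives $\sJ(F)\subseteq\prod_i\sI_{\Delta_{i,p+2}}\subseteq\sI_W$, as you say, but your proposed argument for the reverse inclusion --- ``only the single summand $\delta_i^*F_0$ is singular near a generic point of $\Delta_{i,p+2}$'' --- only constrains stalks at \emph{generic} points of the components of $W$. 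The multiplier ideal is a sheaf, and the inclusion $\sI_W\subseteq\sJ(F)$ must be verified at \emph{every} point, in particular on the higher-codimension loci where several of the $\Delta_{i,p+2}$ meet; there your local picture involves several singular summands at once and your argument says nothing.

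The paper sidesteps this difficulty entirely by a factorization you are one observation away from. The maps $\delta_1,\dots,\delta_{p+1}\colon X^{p+2}\to X$ you introduce are precisely the coordinates of a single morphism
\[
\delta\colon X^{p+2}\lra X^{p+1},\qquad (x_1,\dots,x_{p+2})\longmapsto (x_1-x_{p+2},\dots,x_{p+1}-x_{p+2}),
\]
which is a smooth surjection (indeed, after the change of variables $(x_1,\dots,x_{p+2})\mapsto(x_1-x_{p+2},\dots,x_{p+1}-x_{p+2},x_{p+2})$ it is just a projection of a product). Your $F$ is then $F=\delta^*E_0$ with $E_0=\sum_i\pr_i^*F_0$ on $X^{p+1}$, and your $W$ is $\delta^{-1}(\Lambda)$ with $\Lambda=\bigcup_i\pr_i^{-1}(0)$. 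Now the whole computation takes place on the genuine product $X^{p+1}$, where the summands $\pr_i^*F_0$ live on independent factors: the product formula for multiplier ideals (\cite{PAGII}*{Example~9.5.22}) gives $\sJ(X^{p+1},E_0)=\bigotimes_i\pr_i^*\sJ(X,F_0)=\bigotimes_i\pr_i^*\sI_0=\sI_\Lambda$ on the nose, with no separate local analysis of intersection loci required. Pulling back along the smooth morphism $\delta$ (your \cite{PAGII}*{Theorem~9.5.35}) then yields $\sJ(X^{p+2},F)=\delta^*\sI_\Lambda=\sI_W$, closing the gap. So the fix is not to do a more careful local computation on $X^{p+2}$, but to exploit the product structure of $X^{p+1}$ after factoring through $\delta$.
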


For the sake of clarity we give a quick outline of the argument in \cite{LPP}; to this end,  we  quickly recall some terminology.  As above, $p$ will denote a natural number, 
one  studies sheaves on the $(p+2)$-fold self-product $X^{\times (p+2)}$. Based on the philosophy of Green \cite{Green2}*{\S 3} and later established by Inamdar in \cite{I}, the $N_p$ property for the line bundle  $L$ holds provided 
\begin{equation}\label{eqn:Green}
H^i(X^{\times (p+2)},\boxtimes^{p+2}L\otimes N\otimes \sI_{\Sigma}) \equ 0\ \ \text{for all $i>0$}, 
\end{equation}
and for any nef line bundle $N$, where $\sI_{\Sigma}$ is the ideal sheaf of the reduced algebraic subset
\[
 \Sigma \deq \st{(x_0,\dots,x_{p+1})\mid x_0=x_i\ \text{for some $1\leq i\leq p+1$}}\ .
\]
This is the vanishing condition one  intends to verify. 

Observe that $\Sigma\subseteq X^{\times (p+2)}$ can be realized in the following manner. Upon forming the self-product $Y\deq X^{\times (p+1)}$ with projection maps 
$\pr_i\colon Y\to X$, one considers  the subvariety
\[
 \Lambda \deq \bigcup_{i=1}^{p+1}\pr_i^{-1}(0) \equ \st{(x_1,\dots,x_{p+1})\mid x_i=0\ \ \text{for some $1\leq i\leq p+1$}}\ .
\]
Next, look at the morphism 
\[
 \delta\colon X^{\times (p+2)} \to  X^{\times (p+1)}\ ,\ (x_0,\dots,x_{p+1}) \mapsto (x_0-x_1,\dots,x_0-x_{p+1})\ ,
\]
then $\Sigma \equ \delta^{-1}(\Lambda)$ scheme-theoretically. Consider the divisors 
\[
 E_0 \deq  \sum_{i=1}^{p+1} \pr_i^*F_0\ \ \text{ and }\ \ E\deq \delta^*E_0\ ,
\]
as forming multiplier ideals commutes with taking pullbacks by smooth morphisms (cf. \cite{PAGII}*{9.5.45}), one observes that 
\[
 \sJ(X^{\times p+2};E) \equ \sJ(X^{\times p+2};\delta^*E_0) \equ \delta^*\sJ(X^{\times p+1};E_0) \equ \delta^*\sI_{\Lambda} \equ \sI_{\Sigma}\ .
\]
The divisor $(\boxtimes_{i=1}^{p+2}L)-E$ is ample by \cite{LPP}*{Proposition 1.3}, therefore 
\[
 H^i(X^{\times (p+2)},\boxtimes^{p+2}L\otimes N\otimes \sI_{\Sigma}) \equ H^i(X^{\times (p+2)},\boxtimes^{p+2}L\otimes \sJ(X^{\times p+2};E)) \equ 0 
 \ \text{ for all $i>0$}
\]
by Nadel vanishing, where one sets $N=0$.

\subsection{Projective normality on abelian surfaces.}

As a toy example, we provide an easy proof of Theorem~\ref{thm:np} for projective normality via the convex geometric results given in Theorem~\ref{thm:nopolygon} and 
Theorem~\ref{thm:verygeneric} $(2)$. What sets this apart from the general case is that it suffices  to find a divisor whose multiplier ideal is the 
maximal ideal of the origin in a small neighborhood.  This is of course  much simpler to check than the global statement of  \cite{LPP}.
Historically, this was the first case that was dealt with by Hwang and To \cite{HT}. Observe that our methods provide   a precise characterization of projective normality in terms of existence of elliptic 
curves with given numerical properties. 

\begin{corollary}\label{cor:normality}
Let $X$ be an abelian surface and $L$ an ample line bundle with $(L^2)\geq 20$. Then the following  are equivalent:
\begin{enumerate}
\item $X$ does not contain any elliptic curve $C$ with $(L\cdot C)\leq 2$ and $(C^2)=0$.
\item $L$ defines a projectively normal embedding.
\end{enumerate}
\end{corollary}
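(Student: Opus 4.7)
The proof splits into two implications.

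For $(2)\Rightarrow (1)$: projective normality implies that $L$ is very ample. The presence of an elliptic curve $C\subseteq X$ with $(C^2)=0$ and $1\leq (L\cdot C)\leq 2$ would force $L|_C$ to be a line bundle of degree at most $2$ on an elliptic curve, which is never very ample --- a contradiction.

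For the main direction $(1)\Rightarrow (2)$, the plan is to combine Corollary~\ref{cor:very general} with the framework of \cite{LPP}. First I verify the hypothesis of Theorem~\ref{thm:verygeneric} with $p=0$ at the origin: by homogeneity, the origin plays the role of a very general point. Any irreducible curve $C$ smooth at $o$ with $1\leq (L\cdot C)\leq 2$ would satisfy $(C^2)(L^2)\leq (L\cdot C)^2\leq 4$ by the Hodge Index Theorem, hence $(C^2)\leq 0$; since abelian surfaces contain no rational curves and no irreducible curves of negative self-intersection, $(C^2)=0$, and adjunction then forces $C$ to be an elliptic curve, contradicting (1). Corollary~\ref{cor:very general} therefore produces an effective $\QQ$-divisor $D\equiv\tfrac{1-c}{2}L$ with $\sJ(X;D)=\sI_{X,o}$ on some open neighborhood $U$ of $o$.

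Next, by the Green--Inamdar criterion \eqref{eq:intro Green}, projective normality of $L$ is equivalent to
\[
H^1\bigl(X\times X,\ \pr_1^*L\otimes\pr_2^*L^{\otimes h}\otimes\sI_\Delta\bigr)\ =\ 0 \qquad \text{for all }h\geq 1.
\]
Using the difference morphism $\delta\colon X\times X\to X$, $\delta(x,y)=x-y$ (smooth, with $\delta^{-1}(o)=\Delta$), set $E\deq\delta^*D$. Compatibility of multiplier ideals with smooth pullbacks gives $\sJ(X\times X;E)=\delta^*\sJ(X;D)$, which coincides with $\sI_\Delta$ on the open neighborhood $\delta^{-1}(U)$ of $\Delta$, and is in particular contained in $\sI_\Delta$ globally. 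Since the $\QQ$-divisor $\pr_1^*L+h\,\pr_2^*L-E$ is ample on $X\times X$ (as in \cite{LPP}*{Proposition 1.3}), Nadel vanishing gives the required $H^1$-vanishing with $\sJ(X\times X;E)$ in place of $\sI_\Delta$.

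The main obstacle will be to upgrade this vanishing from $\sJ(X\times X;E)$ to $\sI_\Delta$, since the two need not agree globally --- this is the ``simple observation from homological algebra'' alluded to in the introduction. My plan is to chase the short exact sequence $0\to\sJ(X\times X;E)\to\sI_\Delta\to Q\to 0$, whose quotient $Q$ is supported on $\delta^{-1}(V)$ for some proper subscheme $V\subseteq X\setminus\{o\}$. The desired vanishing for $\sI_\Delta$ will then follow from Nadel vanishing applied to the left-hand term together with a direct cohomological analysis of $Q$, exploiting that $Q$ is concentrated on fibres of $\delta$ over a proper subset of $X$, so that its twist by $\pr_1^*L\otimes\pr_2^*L^{\otimes h}$ has trivial $H^1$ thanks to the ampleness of $L$ and the smoothness of $\delta$.
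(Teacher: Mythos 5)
Your direction $(2)\Rightarrow(1)$ is correct, and in fact a bit more economical than the paper's: you observe directly that the restriction of a very ample bundle to the closed subscheme $C$ is very ample, whereas the paper detours through Kawamata--Viehweg to show surjectivity of $H^0(X,L)\to H^0(C,L|_C)$ before drawing the same conclusion. Both work. (A minor point: you invoke Corollary~\ref{cor:very general} with $p=0$, although Theorem~\ref{thm:verygeneric} is stated for $p$ a positive integer; the paper's proof of Corollary~\ref{cor:normality} deliberately redoes the Seshadri case analysis for $p=0$ rather than cite that result. Substantively this is fine, since the argument in Theorem~\ref{thm:verygeneric} does go through for $p=0$, but it is worth being explicit.)

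The real gap is in the last step of $(1)\Rightarrow(2)$. You have the correct object $Q=\sI_\Delta/\sJ(X\times X;E)$, you correctly identify that its cohomology is the obstruction, and you correctly note that $Q$ is supported on $\delta^{-1}(V)$ with $V$ a closed set disjoint from a neighbourhood of $o$. But the sketched justification for $H^1(Q\otimes\pr_1^*L\otimes\pr_2^*L^{\otimes h})=0$ --- ``ampleness of $L$ and smoothness of $\delta$'' --- does not close the argument. If $V$ is finite, a Leray/projection-formula computation over $\delta$ indeed works, since the fibres $\delta^{-1}(v)$ are abelian surfaces on which the twist is ample; but the construction in Theorem~\ref{thm:nopolygon} gives no control over $\dim V$, and $V$ may well be a union of curves (e.g.\ if $D$ has a component of multiplicity $\geq 1$). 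In that case $R^0\delta_*$ of the twist pushes the computation to an $H^1$ on a one-dimensional (possibly non-reduced) scheme, with no a priori vanishing. The paper's ``simple observation from homological algebra'' is precisely what replaces this: because $\Delta$ and $\delta^{-1}(V)$ are disjoint, the exact sequence $0\to Q\to\sO_{X\times X}/\delta^*\sJ(X;D)\to\sO_\Delta\to 0$ splits, so $Q$ is a direct summand of $\sO_{X\times X}/\delta^*\sJ(X;D)$. Applying Nadel vanishing to $0\to\delta^*\sJ(X;D)\to\sO_{X\times X}\to\sO_{X\times X}/\delta^*\sJ(X;D)\to 0$ (twisted by the ample $\pr_1^*L\otimes\pr_2^*L^{\otimes h}$) kills $H^{\geq 1}$ of the quotient, hence of the summand $Q$, and then your short exact sequence $0\to\sJ\to\sI_\Delta\to Q\to 0$ finishes it. This direct-summand step is the missing ingredient; without it the argument does not go through.
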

\begin{remark}\label{rem:veryample vs normality}
As remarked in the introduction, there seems to be a discrepancy between Corollary~\ref{cor:normality} and Reider's theorem from \cite{R}. In the case of checking very ampleness condition for an ample line bundle $L$ on an abelian surface $X$, the latter says that if $L^2\geq 10$, then $L$ is very ample on $X$ if and only if there is no elliptic curve $C\subseteq X$ with $C^2=0$ and $1\leq (L.C)\leq 2$. So, now consider  a general abelian surface $X$ given by a principal polarization $L$ of type $(1,6)$ and hence $L^2=12$. By Reider's theorem  then $L$ is very ample. In order for $L$ to be projectively normal,
the map $\textup{Sym}^2(H^0(X, L))\rightarrow H^0(X, L^{\otimes 2})$ needs to be  surjective. By  Riemann--Roch,  the dimension of the first vector space is $21$, while  that of the image is $24$, therefore  $L$ is very ample but not projectively normal. 

A similar  observation holds for the cases  $(1,5)$, $(2,2)$; note that in the second case the line bundle is not  very ample but it still defines a $2:1$ morphism of $X$ to the Kummer surface. For the cases $(2,d)$ with $d\geq 4$,  a result of Pareschi and Popa  \cite{PP}*{Theorem 6.1} yields  that $L$ is in fact  projectively normal when $X$ is a generic choice of abelian surface with the given polarization. It turns out that it is much harder to say when a very ample line bundle is not projectively normal, 
especially that the only difference by Reider's theorem and Corollary~\ref{cor:normality} in these two cases is the lower bound on  $(L^2)$.

\end{remark}

\begin{proof}
$(1)\Rightarrow (2)$ The main observation is  that in order to prove projective normality on an abelian surface, it is enough to show $\sJ(X;D)\equ \sI_o$ 
in a neighborhood $U\subseteq X$ of the origin  for some effective $\QQ$-divisor 
\[
D \ \equiv \ (1-c)B \ \deq \ \frac{1}{2}(1-c)L \ .
\]
To see this consider the map $\sigma:X\times X\rightarrow X$ given by  $\sigma(x,y)=x-y$.  Then  
\[
\sJ(X\times X,\sigma^*(D)) \equ  \sigma^*(\sJ(X;D))= I_{\Delta}\cdot \sI\ ,
\]
where $\Delta$ stands for  the diagonal, and $\sI\subseteq \sO_{X\times X}$ is an ideal sheaf whose cosupport $V$ is contained in $\sigma^{-1}(X\setminus U)$, 
hence  disjoint from $\Delta$. Now, one has the short exact sequences  
\[
0\rightarrow I_{\Delta}\cdot \sI\rightarrow
\sO_{X\times X}\rightarrow \sO_{X\times X}/(I_{\Delta}\cdot \sI)\rightarrow 0
\]
and
\[
0\rightarrow I_{\Delta}\cdot \sI\rightarrow
I_{\Delta}\rightarrow I_{\Delta}/(I_{\Delta}\cdot\sI)\rightarrow 0 \ ,
\]
since  $\Delta\cap V=\varnothing$, which also implies by straightforward commutative algebra that  $I_{\Delta}/(I_{\Delta}\cdot\sI)$ is a direct summand of 
$\sO_{X\times X}/(I_{\Delta}\cdot \sI)$. 

Therefore, in order  to prove the vanishing of higher cohomology of shifts of   $I_{\Delta}$, one shifts the first sequence and uses Nadel vanishing for the first two 
terms to obtain  vanishing for the cohomology of the shifts of the quotient. A second application of Nadel to the second sequence and the fact that taking 
cohomology commutes with direct sums gives the required vanishing. 
 
To finish off the proof we need to present  a divisor $D\equiv (1-c)B$ with  $\sJ(X;D)=\sI_{X,o}$, where the equality takes place in a neighborhood of the origin. Since we work on an abelian surface, 
 we can assume that $x\in X$ is a very general point. As in our previous proofs, we approach the question through an analysis of the Seshadri constant  $\epsilon(B;x)=\frac{1}{2}\epsilon(L;x)$.

If  $\epsilon(L;x)=r/q$ with $r\geq q\geq 2$, then Case $(2)$ of Theorem~\ref{thm:verygeneric} and Theorem~\ref{thm:nopolygon} imply  the existence of such a divisor. If  $\epsilon(L;x)\geq 4$ then again condition  $(\ref{eq:nopolygon1})$ in  Theorem~\ref{thm:nopolygon} applies, and we are done. 

Thus,  it remains to tackle the cases when there exists a curve $C\subseteq X$ with $\mult_{x}(C)=1$ and $(L\cdot C)=\epsilon(L;x)$ for which  $\epsilon(L;x)=1,2,3$. 
If  $\epsilon(L;x)=1,2$, then the  Hodge index theorem yields  $(C^2)\leq 0$, and  since $X$ is abelian, $(C^2)=0$. The adjunction formula then shows that 
$C$ must be  an elliptic curve as abelian varieties do not  contain rational curves. 

If  $\epsilon(L;x)=3$, then  $\epsilon(B;x)\geq 3/2$. Note that 
\[
\frac{3}{2} \dgeq  \frac{5-\sqrt{5}}{2} \ ,
\]
thus,  by  Theorem~\ref{thm:nopolygon} (ii) we deduce the existence of a divisor with the desired properties.

$(2)\Rightarrow (1)$ In the opposite direction, suppose  that $L$ satisfies property $N_0$, i.e. it is  projectively normal,  and that there exists an elliptic curve $C\subseteq X$ with $(L\cdot C)=1$ or $2$. 
The first condition implies that $L$ is a very ample divisor and defines an embedding $X\subseteq \PP(H^0(X,L))$. On the other hand, using \cite{PAGI}*{Theorem 2.2.15} and    the fact that $L^2\geq 20$, we know 
 that $L-C$ is big; then it  is automatically  nef since $X$ is  an abelian surface.  The  Kawamata-Viehweg vanishing theorem then implies  that the restriction map
\[
H^0(X,L) \ \rightarrow \ H^0(C,L|_C)
\]
is surjective. Since $L$ is very ample, $L|_C$ defines  an embedding of  $C$ into some projective space as well, however as  $1\leq (L\cdot C)\leq 2$, this can never be the case.
\end{proof}

\subsection{Property $(N_p)$ in the  absence of elliptic curves of low degree}

In this subsection we give a proof of the direct implication of Theorem~\ref{thm:np}. As opposed to the case of projective normality  it is no longer clear whether 
finding an effective divisor $D$ with $\sJ(X;D)=\sI_o$  locally suffices to verify $(N_p)$. We show however, that with a bit more work one can in fact control the multiplier 
ideal of the divisor found in Theorem~\ref{thm:nopolygon} over the whole abelian surface $X$. As always, $\pi\colon X'\to X$ denotes the blow-up of $o$ with exceptional 
divisor $E$. 

The main goal of this subsection is to prove the following theorem.

\begin{theorem}\label{thm:nopolygonabelian}
Let $X$ be an abelian surface and $B$ an ample $\QQ$-divisor on $X$ with $(B^2)>4$. Suppose that
\begin{equation}\label{eq:nopolygonabelian}
\textup{length}\big(\Delta_{(E,z_0)}(\pi^*(B)) \cap \{2\}\times\RR \big) \ > \ 1 \ ,
\end{equation}
for some point $z_0\in E$. Then there exists an effective $\QQ$-divisor $D\equiv (1-c)B$ for some $0<c<1$ such that $\sJ(X,D)=\sI_{X,o}$ over the whole of $X$.
\end{theorem}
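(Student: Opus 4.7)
The plan is to construct $D$ directly as a rescaled general section of a twisted linear system $V_k \deq H^0(X, mB\otimes\sI_{X,o}^{2k})$, for integers $k\gg 0$ and $m = \lfloor k(1-c)\rfloor$. The numerical hypothesis $(B^2)>4$ will drive the dimension count, while the length condition (\ref{eq:nopolygonabelian}) will be used to control the base locus of this twisted system.

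For $0<c\ll 1$ and $k\gg 0$, Kodaira vanishing together with Riemann--Roch on the abelian surface gives $h^0(X,mB) = \tfrac{m^2}{2}(B^2)$, while imposing multiplicity $\geq 2k$ at $o$ costs at most $\binom{2k+1}{2}$ linear conditions, so
\[
\dim V_k \ \geq \ \tfrac{1}{2}\bigl(m^2(B^2) - 2k(2k+1)\bigr) \ \sim\ \tfrac{k^2}{2}\bigl((1-c)^2(B^2) - 4\bigr)\ ,
\]
which is positive for $c$ small precisely because $(B^2)>4$. The same estimate yields $\dim V_k > \dim H^0(X, mB\otimes \sI_{X,o}^{2k+1})$, so a general $\sigma\in V_k$ has $\mult_o(\sigma) = 2k$ exactly. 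Set $D\deq \tfrac{1}{k}\sigma\equiv (1-c)B$. If one can further show that the generic $\sigma$ is smooth off $o$, then the blow-up $\pi\colon X'\to X$ gives $\pi^*D = \tfrac{1}{k}\widetilde{\sigma} + 2E$, with $\widetilde{\sigma}$ smooth and meeting $E$ transversally in $2k$ distinct points. In particular $\pi^*D$ has simple normal crossings support, and a direct computation yields
\[
\sJ(X,D) \equ \pi_*\sO_{X'}\bigl(E - \lfloor \pi^*D\rfloor\bigr) \equ \pi_*\sO_{X'}(-E) \equ \sI_{X,o}
\]
globally on $X$, since $\lfloor\tfrac{1}{k}\widetilde{\sigma}\rfloor = 0$ for $k\geq 2$.

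The main obstacle is verifying that the base locus of $V_k$ is concentrated at the origin, so that the generic $\sigma$ really is smooth off $o$. A positive-dimensional base-locus component would be an irreducible curve $C\subset X$ through $o$; pulling back to $X'$, the effectivity of $\pi^*(mB)-2kE-a\overline{C}$ for some $a>0$ (needed for $V_k$ to lie in sections vanishing along $\overline{C}$ with multiplicity $a$) would force a numerical inequality of the form $(B\cdot C)\leq 2\mult_o(C)$ in the limit $c\to 0$. By the Zariski-decomposition analysis of Section~2, such a curve would force the negative part of $\pi^*B - 2E$ on $X'$ to contain $\overline{C}$ with a coefficient large enough to make $(P_2\cdot E)\leq 1$, contradicting the length hypothesis (\ref{eq:nopolygonabelian}). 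The abelian structure enters essentially here: homogeneity of $X$ transports the numerical constraints derived at $o$ throughout $X$, and the hypothesis $(B^2)>4$ provides the intersection-theoretic slack needed throughout the argument.
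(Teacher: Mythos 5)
Your approach differs structurally from the paper's (which perturbs the Zariski decomposition of $\pi^*B-2E$ via Koll\'ar--Bertini instead of taking a general section of a twisted linear system), and it has two genuine gaps.

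\textbf{The base-curve reductio is false.} You argue that a positive-dimensional base component $C$ of $V_k$ would contradict the length hypothesis, but it does not. A curve $\overline{C}$ lies in the negative part $N_2=\sum a_i\overline{\Gamma}_i$ of $\pi^*B-2E$ precisely when $(B\cdot C)<2\mult_o(C)$, and the length hypothesis only asserts $(P_2\cdot E)=2-\sum a_i\mult_o(\Gamma_i)>1$, i.e.\ $\sum a_i\mult_o(\Gamma_i)<1$. That bounds the \emph{coefficients} $a_i$; it does not eliminate the curves. As soon as any such $\Gamma_i$ exists, every section of $V_k$ (for $k\gg0$) vanishes along it to order roughly $a_im$, so $\widetilde{\sigma}$ is non-reduced along $\overline{\Gamma}_i$ and your SNC computation $\sJ(X,D)=\pi_*\sO_{X'}(-E)$ collapses. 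The correct reading of the length hypothesis is that the fixed divisor appears in $D$ with total coefficient $<1$ along each curve, which controls the multiplier ideal generically along those curves but not at their singular points.

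\textbf{The globalization step is missing.} Even accepting a local construction, you must still show $\sJ(X,D)_y=\sO_{X,y}$ for every $y\neq o$. The danger is that some negative-part curve $\Gamma_i$ could be highly singular at such a $y$, making $(1-c)a_i\mult_y(\Gamma_i)\geq 1$ even though $\sum a_i\mult_o(\Gamma_i)<1$. The length hypothesis says nothing about multiplicities at points other than $o$, and ``homogeneity transports the numerical constraints'' does not close this gap — homogeneity moves the whole picture but does not compare $\mult_o(\Gamma_i)$ with $\mult_y(\Gamma_i)$ for a \emph{fixed} curve. The paper's resolution is a specifically abelian intersection-theoretic argument: if $\mult_y(\Gamma_i)>\mult_o(\Gamma_i)$ and the proper transform of $\Gamma_i$ at $o$ is a negative curve, then its proper transform at $y$ is even more negative, and (by comparing $\Gamma_i$ with its translate $T_{y-o}\Gamma_i$ and using that algebraically equivalent curves have nonnegative intersection) one forces $\Gamma_i$ to be translation-invariant, hence a smooth elliptic curve with $\mult_o=\mult_y=1$ — a contradiction. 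Without an argument of this kind your proof does not produce the claimed global equality $\sJ(X,D)=\sI_{X,o}$.

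As a secondary point, your dimension count establishes that $V_k\neq 0$ and that a general member has multiplicity exactly $2k$ at $o$, but it does not control the tangent cone: multiplicity $2k$ does not by itself imply that $\widetilde{\sigma}$ meets $E$ transversally in $2k$ distinct points, which is what your SNC formula requires. The paper sidesteps this by never working with a single general divisor: it decomposes $\pi^*((1-c)B)-2E$ into positive and negative parts, perturbs the nef part by the Koll\'ar--Bertini trick, and invokes invariance of multiplier ideals under small perturbations, so that the only input needed is $\ord_z(N_2)<1$ along $E$ — which is exactly what the length hypothesis delivers.
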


\begin{proof}[Proof of Theorem~\ref{thm:np}, $(1)\Rightarrow (2)$]
By Theorem~\ref{thm:LPP} it suffices  to find a divisor as produced by Theorem~\ref{thm:nopolygonabelian}. Since $X$ is abelian, it is enough to treat the case 
when the origin $o$ behaves like a very general point. By Theorem~\ref{thm:verygeneric} the  condition $(\ref{eq:nopolygonabelian})$ is automatically satisfied whenever 
$X$ does not contain an elliptic curve $C$ with $(C^2)=0$ and $1\leq (L\cdot C) \leq p+2$.

It remains to show that the exceptions in Theorem~\ref{thm:verygeneric} correspond to  the exceptions 
in the statement in Theorem~\ref{thm:np}. For a curve $C\subseteq X$ which is smooth at the point $x$ and satisfies $1\leq (L\cdot C)\leq p+2$ one has  $(C^2)\leq 0$ by the Hodge index theorem  since $(L^2)\geq 5(p+2)^2$. Since we are on an abelian surface,  then automatically we have that $(C^2)=0$ and by adjunction this indeed forces $C$ to be an elliptic curve. 
\end{proof}

\begin{proof}[Proof of Theorem~\ref{thm:nopolygonabelian}]
To start with, \cite{KL}*{Proposition 3.1} gives the inclusion
\[
\Delta_{(E,z)}(\pi^*(B)) \cap \{2\}\times\RR  \dsubseteq  \{2\}\times [0,2], \textup{ for any } z\in E \ .
\]
Hence, according to  \cite{KL}*{Remark~1.9}, condition $(\ref{eq:nopolygonabelian})$ implies   $(\ref{eq:nopolygon1})$ in  Theorem~\ref{thm:nopolygon} for any $z\in E$ as $(B^2)>4$ by assumption. 

By Theorem~\ref{thm:nopolygon}  we know how to find a divisor $D\equiv B$ so that $\sJ(X,D)=\sI_{X,0}$ locally around a point. It remains to show that this equality 
in fact holds over the whole of $X$. 

Recall that $D$ is the image of a divisor $D'$ on $X'$, where (revisiting the proof of Theorem~\ref{thm:nopolygon}) 
\[
D' \ \equiv \ P+ \sum_{i=1}^{i=r}a_iE_i'+ 2E \ , 
\]
and  $\pi^*(B)-2E = P+\sum a_iE_i'$ is the appropriate Zariski  decomposition. 

Writing  $E_i$ for the image of $E_i'$ under  $\pi$,  the first step in the proof is to show the following claim:\\

\noindent
\textit{Claim:} Assume that  $o\neq y\in X$ such that  $\sJ(X,D)_y\neq \sO_{X,y}$. Then 
\begin{equation}\label{eq:claim}
\sum_{i=1}^{i=r}a_i\mult_o(E_i) \ < \ \sum_{i=1}^{i=r} a_i\mult_y(E_i)\ .
\end{equation}
\textit{Proof of Claim.} 
First observe that by \cite{PAGII}*{Proposition~9.5.13}, the condition $\sJ(X,D)_y\neq \sO_{X,y}$ implies  $\mult_y(D)\geq 1$. Since the morphism $\pi$ is an isomorphism 
around the point $y$,  considering $y$ as a point on  $X'$, we actually have  $\mult_y(D')\geq 1$. In the proof of Theorem~\ref{thm:nopolygon} we were able to write 
$P=A_k+\frac{1}{k}N$, where $A_k$ is ample and $N'$ is an effective $\QQ$-divisor  for any $k\gg 0$. 

Thus, we chose $D'=P'+\sum a_iE_i'+2E$, where $P'=A+\frac{1}{k}N'$, $A\equiv A_k$ is a generic choice, and $k\gg 0$. Since $A_k$ is ample, 
a generic choice of $A$ does not pass through the point $y$, in particular $\mult_y(P')\rightarrow 0$ as  $k\rightarrow \infty$. Since  $\mult_y(D')\geq 1$, 
this implies that
\[
\mult_y(\sum_ia_iE_i) \equ  \sum_ia_i\mult_y(E_i) \dgeq  1 \ . 
\]
As a consequence, it suffices to check that 
\[
\sum_i a_i\mult_o(E_i) \ < \ 1 \ .
\]
To this  end recall  that $E_i'=\pi^*E_i-\mult_o(E_i)E$, and therefore
\[
2 \equ  ((\pi^*B-2E)\cdot E) \equ  (P\cdot E) + \sum_ia_i(E_i'\cdot E) \equ  (P\cdot E)+\sum_ia_i\mult_o(E_i) \ .
\]
Because $(P\cdot E)$ is equal to the length of the vertical segment $\Delta_{(E,z)}(\pi^*B)\cap \{2\}\times\RR$ for any $z\in E$, 
$(\ref{eq:nopolygonabelian})$  implies that $\sum_ia_i\mult_o(E_i)<1$ as we wanted.\\
\textit{End of Proof of the Claim} \\

\noindent
We return to the proof of the main statement. We  will argue by contradiction and suppose that  there exists a point $y\in Y$ for which $\sJ(X;D)_x\neq \sO_{X,y}$. 
In other words, by the claim above, we have the inequality
\[
 \sum_{i}a_i\mult_o (E_i) < \sum_{i}a_i\mult_y (E_i) \ .
\]
This yields the existence of a curve $E_i$ with  $\mult_o(E_i) < \mult_y(E_i)$. 

By our assumptions $E_i'$ is a negative curve on $X'$; let $E_i^y$ denote  the proper transform  of $E_i$ with respect to the blow-up $\pi_y\colon X_y\to X$. 
Since $\mult_o(E_i)<\mult_y(E_i)$, we obtain 
\[
(E_i^y)^2 \equ  E_i^2-(\mult_y(E_y))^2 \ < \ E_i^2-(\mult_o(E_i))^2 \equ  E_i'^2\ <\ 0\ ,
\]
in particular, we deduce that $E_i^y$ remains a negative curve  on $X_y$, just  as $E_i'$ on $X'$. However, Lemma~\ref{lem:two points} shows  that $E_i$ must then be a 
smooth elliptic curve passing through $o$  and $y$. Therefore, $\mult_oE_i=\mult_yE_i=1$, which contradicts the  inequality above. 
\end{proof}

\begin{lemma}\label{lem:two points}
Let $X$ be an abelian surface,  $C\subseteq X$ a curve passing through two distinct points $x_1, x_2\in X$. If the proper transforms of $C$ for the respective  blow-ups 
of $X$ at the $x_i$ both become  negative curves,  then $C$ must be the smooth elliptic curve that is invariant under the translation maps $T_{x_1-x_2}$ or  $T_{x_2-x_1}$. 
\end{lemma}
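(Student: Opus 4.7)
The plan is to exploit the homogeneity of $X$ via translation. Set $a \deq x_1 - x_2 \neq 0$ and write $m_i \deq \mult_{x_i}(C)$; the hypothesis that the proper transform of $C$ on $\Bl_{x_i}(X)$ is a negative curve amounts to $(C^2) < m_i^2$ for $i=1,2$. I first record that any irreducible curve on an abelian surface satisfies $(C^2)\geq 0$: by adjunction $(C^2) = 2p_a(C)-2$, and $p_a(C) \geq p_g(\tilde C) \geq 1$ since an abelian variety admits no non-constant morphism from a smooth rational curve.

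The central step is to prove that $C$ is invariant under translation by $a$. Set $C' \deq T_a(C)$; because numerical equivalence classes on an abelian variety are preserved by translation, $(C\cdot C')=(C^2)$. Moreover $x_1=T_a(x_2)\in C'$ while $x_1\in C$, so $x_1\in C\cap C'$. If $C\neq C'$, then this intersection is proper, and the local contribution at $x_1$ alone yields
\[
(C^2) \equ (C\cdot C') \dgeq \mult_{x_1}(C)\cdot\mult_{x_1}(C') \equ m_1 m_2\ ,
\]
using $\mult_{x_1}(C')=\mult_{x_2}(C)=m_2$. Combining $(C^2)<m_i^2$ with $(C^2)\geq 0$ forces $(C^2)<m_1m_2$, a contradiction. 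Hence $T_a(C)=C$, and equivalently $T_{-a}=T_{x_2-x_1}$ also preserves $C$.

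Once translation invariance is established, $m_1=m_2$ (write $m$ for the common value), since $T_a$ is an automorphism of $X$ sending $x_2\mapsto x_1$ and stabilizing $C$. Applying the singularity estimate $p_a(C)-p_g(\tilde C)\geq \sum_{p}\binom{\mult_p C}{2}\geq 2\binom{m}{2}$ together with $p_g(\tilde C)\geq 1$ and adjunction gives $(C^2)\geq 2m(m-1)$. Combined with $(C^2)<m^2$ this forces $m(m-2)<0$, so $m=1$; then $0\leq (C^2)<1$ yields $(C^2)=0$, hence $p_a(C)=1$, and $C$ is therefore a smooth elliptic curve (again using the absence of rational curves on abelian varieties).

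The main obstacle is the translation-invariance step, which depends crucially on the fact that translates are numerically equivalent on an abelian variety; once this is in hand, the remaining arithmetic is routine.
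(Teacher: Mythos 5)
Your proof is correct, and the central translation-invariance step is exactly the one the paper uses: intersect $C$ with its translate $T_a(C)$, note the two curves have the same numerical class, and read off a local intersection multiplicity at the point where they both pass through, contradicting the negativity of the proper transforms. Where you go beyond the paper is the finishing step. The paper, having derived the contradiction and hence $T_a(C)=C$, simply \emph{asserts} that $C$ is therefore a smooth elliptic curve; this is not immediate from translation invariance alone (if $a$ is a torsion point, invariance under $T_a$ places no dimension constraint on the stabilizer of $C$, so $C$ need not a priori be a translated abelian subvariety). Your argument supplies the missing step cleanly: from $T_a(C)=C$ you get $m_1=m_2=m$, and then the delta-invariant estimate $p_a(C)-p_g(\tilde C)\geq 2\binom{m}{2}$ combined with $p_g(\tilde C)\geq 1$ (no rational curves on an abelian surface) and adjunction $(C^2)=2p_a(C)-2$ forces $2m(m-1)\leq (C^2)<m^2$, hence $m=1$; then $0\leq (C^2)<1$ gives $(C^2)=0$, $p_a(C)=p_g(\tilde C)=1$, and $C$ is smooth elliptic. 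So yours is the same route but with a rigorous finish where the paper waves a hand.

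One small remark: to get $(C^2)<m_1m_2$ you do not really need to invoke $(C^2)\geq 0$ at that point, since $(C^2)<\min(m_1^2,m_2^2)=\min(m_1,m_2)^2\leq m_1m_2$ directly; but the nonnegativity of $(C^2)$ on an abelian surface is genuinely needed later to pin down $(C^2)=0$ from $(C^2)<1$, so including it is harmless and in fact necessary for the full argument.
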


\begin{proof}
Denote by $C_1$ and $C_2$  the proper transforms of $C$ with respect to  the blow-up of $X$ at $x_1$ and $x_2$, respectively. Aiming at a contradiction suppose that 
$T_{x_2-x_1}(C)\neq C$. Since  both proper transforms $C_1$ and $C_2$ are negative curves, one has 
\[
 0 > (C_1^2) \equ (C^2) - (\mult_{x_1}(C))^2\ \ \text{ and }\ \ 0 > (C_2^2) \equ(C^2)- (\mult_{x_2}(C))^2\ ,
\]
or,  equivalently $(C^2) <\min\{ \mult_{x_1}(C)^2 , \mult_{x_2}(C)^2\}$. On the other hand observe that 
\[
 (C^2) \equ  (C\cdot T_{x_2-x_1}(C)) \dgeq \mult_{x_2}(C)\cdot \mult_{x_2}(T_{x_2-x_1}(C))  \equ  \mult_{x_2}(C)\cdot \mult_{x_1}(C)\ ,
\]
for  $C$ is algebraically equivalent to its  translate $T_{x_2-x_1}(C)$. This is a contradiction, so  we conclude that $C$ is invariant under both of the translation maps $T_{x_1-x_2}$ or $T_{x_2-x_1}$, and is indeed an elliptic smooth curve. 
\end{proof}

\subsection{On the Koszul property of section rings}

 Still making use of the observations of Lazarsfeld--Pareschi--Popa, we produce a strong numerical criterion for the section ring $R(X;L)$ of an ample line bundle on an abelian surface
to be Koszul. Summarizing the proof of \cite{LPP}*{Proposition 3.1} one obtains:

\begin{proposition}$($ \cite{LPP}*{Proposition 3.1} $)$ \label{prop:LPP Koszul}
With notation as above assume that $X$ is an abelian surface, $L$ an ample line bundle on $X$. If there exists a rational number $0<c<1$ and an effective $\QQ$-divisor 
$F_0$ such that 
\begin{enumerate}
 \item $F_0\equiv \frac{1-c}{3}L$,
 \item $\sJ(X;F_0)=\sI_o$\ , 
\end{enumerate}
then $R(X,L)$ is Koszul. 
\end{proposition}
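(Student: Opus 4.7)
The plan is to reproduce, with minimal modifications, the argument of Lazarsfeld--Pareschi--Popa as summarized in the proof of Theorem~\ref{thm:LPP} above, specialized to the case $p=1$, and then to invoke a cohomological criterion for Koszulness in place of the Green--Inamdar criterion for $(N_p)$. More precisely, the first step is to reduce Koszulness to a Nadel-friendly vanishing statement: the section ring $R(X,L)$ is Koszul provided that, for every $i>0$ and every nef line bundle $N$ on $X^{\times 3}$,
\[
H^i\!\bigl( X^{\times 3},\ (L\boxtimes L\boxtimes L)\otimes N \otimes \sI_{\Sigma} \bigr) \equ 0,
\]
where $\Sigma\subseteq X^{\times 3}$ is the reduced subscheme $\st{(x_0,x_1,x_2)\mid x_0=x_1\ \text{or}\ x_0=x_2}$. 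This is a classical translation (in the spirit of Backelin--Fr\"oberg, adapted to the section ring setting via  Kempf-style arguments) and is the homological-algebra input that allows the multiplier ideal machinery to come into play.

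With this reduction in hand, the construction of the required singular divisor on $X^{\times 3}$ runs exactly parallel to the $(N_p)$ argument with $p=1$. On $X\times X$, set $G_0 := \pr_1^*F_0+\pr_2^*F_0$, and consider the difference morphism
\[
\delta\colon X^{\times 3}\lra X\times X,\qquad (x_0,x_1,x_2)\mapsto (x_0-x_1,\,x_0-x_2).
\]
Put $E := \delta^*G_0$. Hypothesis (i), together with \cite{LPP}*{Proposition~1.3} applied with $p=1$, shows that the residual class $(L\boxtimes L\boxtimes L)-E$ is ample on $X^{\times 3}$. Hypothesis (ii), coupled with the compatibility of multiplier ideals with smooth pullbacks (\cite{PAGII}*{Example~9.5.45}), yields
\[
\sJ(X^{\times 3};E) \equ \delta^*\sJ(X\times X;G_0) \equ \delta^*\bigl(\pr_1^{-1}\sI_o + \pr_2^{-1}\sI_o\bigr) \equ \sI_{\Sigma}.
\]
Applying Nadel vanishing to $(L\boxtimes L\boxtimes L)\otimes N\otimes \sO_{X^{\times 3}}(-E)$ then supplies the desired vanishings and hence Koszulness.

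The main obstacle, and the step most specific to \cite{LPP}, is the cohomological reduction of the Koszul property of $R(X,L)$ to a vanishing statement on $X^{\times 3}$ amenable to Nadel's theorem; once this bridge is available, the geometric construction, the ampleness of the residual class, and the identification of $\sJ(X^{\times 3};E)$ with $\sI_{\Sigma}$ are essentially identical to the steps we already carried out for property $(N_p)$ in the proof of Theorem~\ref{thm:LPP}.
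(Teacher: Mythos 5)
Your proposal has the right \emph{shape}---pull back $F_0$ under the difference morphism, match multiplier ideals, check ampleness of the residual class, apply Nadel---but the crucial first step, the cohomological reduction, is where it goes wrong, and it goes wrong in a way that invalidates the whole argument.

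The criterion you state, namely that $R(X,L)$ is Koszul provided $H^i\bigl(X^{\times 3}, (L\boxtimes L\boxtimes L)\otimes N\otimes \sI_{\Sigma}\bigr)=0$ for all $i>0$ and all nef $N$, with $\Sigma=\Delta_{0,1}\cup\Delta_{0,2}$, is \emph{verbatim} the Green--Inamdar criterion for property $(N_1)$ (it is exactly formula~(\ref{eqn:Green}) with $p=1$). Property $(N_1)$ concerns the linearity of the first two steps of the resolution of $R(X,L)$ over the polynomial ring $S=\Sym H^0(L)$; Koszulness concerns the linearity of the \emph{entire} resolution of the residue field $\CC$ over $R(X,L)$ itself. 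These are genuinely different conditions---there are quadratic algebras with linear first syzygies that fail to be Koszul---so $(N_1)$ does not imply Koszulness, and your reduction cannot be correct. The appeal to a ``classical translation in the spirit of Backelin--Fr\"oberg adapted via Kempf-style arguments'' does not identify an actual theorem; no such finite, single-product criterion for Koszulness exists.

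What \cite{LPP}*{Proposition 3.1} actually uses (and what the paper here is citing, since its ``proof'' is only a reference) is a criterion of the form
\[
H^1\bigl(X, M_L^{\otimes h}\otimes L\bigr)\equ 0 \qquad\text{for \emph{all} }h\geq 1,
\]
an unbounded family of vanishings, each one translating into a cohomological statement on $X^{\times(h+1)}$. The nontrivial point in LPP is that the single hypothesis $F_0\equiv\frac{1-c}{3}L$, $\sJ(X;F_0)=\sI_o$ suffices to verify \emph{all} of them: the configuration and difference morphism are arranged so that, on each factor of $X^{\times(h+1)}$, the divisor $\delta^*\!\bigl(\sum\pr_i^*F_0\bigr)$ accumulates at most twice, keeping the residual class ample uniformly in $h$. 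If you try to run your $X^{\times 3}$ argument with the star configuration $\Sigma=\Delta_{0,1}\cup\Delta_{0,2}$ on higher products $X^{\times(h+1)}$, the center vertex $x_0$ picks up $h$ copies of $\frac{1-c}{3}L$ and the residual class fails to be ample once $h\geq 3$; this is precisely the obstruction the LPP setup is designed to avoid. So the gap in your proof is not cosmetic: you need the correct (infinite) Koszul criterion and the correct geometry on the higher products, neither of which is captured by the $(N_1)$ vanishing on $X^{\times 3}$.
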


\begin{corollary}\label{cor:Koszul}
 Let $X$ be an abelian surface, $L$ an ample line bundle on $X$ with $(L^2)\geq 45$. If $X$ does not contain an elliptic curve $C$ with $(C^2)=0$ and  $1\leq (L\cdot C)\leq 3$, then $R(X,L)$ is Koszul. 
\end{corollary}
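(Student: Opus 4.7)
The plan is to deduce the statement from the machinery already assembled in the paper, essentially by re-running the argument that proved the implication $(1)\Rightarrow(2)$ of Theorem~\ref{thm:np} in the special case $p=1$, and then invoking Proposition~\ref{prop:LPP Koszul} instead of Theorem~\ref{thm:LPP}.

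Concretely, set $B \deq \tfrac{1}{3}L$, so that $(B^2) \equ (L^2)/9 \geq 5 > 4$, and in particular the hypothesis $(B^2)>4$ of Theorem~\ref{thm:nopolygonabelian} is met. By Proposition~\ref{prop:LPP Koszul} it suffices to exhibit, for some $0<c<1$, an effective $\QQ$-divisor $F_0 \equiv (1-c)B$ with $\sJ(X;F_0) \equ \sI_o$ globally on $X$. Theorem~\ref{thm:nopolygonabelian} produces exactly such a divisor provided the length condition
\[
 \textup{length}\bigl(\Delta_{(E,z_0)}(\pi^*B) \ \cap \ \{2\}\times\RR\bigr) \ > \ 1
\]
holds at some point $z_0 \in E$, where $\pi\colon X'\to X$ is the blow-up of the origin.

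To verify this length condition I would apply Theorem~\ref{thm:verygeneric} with $p=1$. The numerical assumption there, $(L^2) \geq 5(p+2)^2 \equ 45$, is precisely the hypothesis of the corollary. The remaining geometric hypothesis is the absence of an irreducible curve $C \subseteq X$ smooth at the origin with $1 \leq (L\cdot C) \leq 3$. Since $X$ is abelian (and so the origin behaves as a very general point), we may argue exactly as at the end of the proof of Theorem~\ref{thm:np}: for any such curve $C$ the Hodge index theorem together with $(L^2)\geq 45$ forces $(C^2)\leq 0$; since $X$ is abelian there are no negative curves, so $(C^2)\equ 0$; and the adjunction formula combined with the absence of rational curves on abelian surfaces forces $C$ to be a smooth elliptic curve. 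The hypothesis that $X$ contains no elliptic curve with $(C^2)\equ 0$ and $1\leq (L\cdot C)\leq 3$ therefore rules out the existence of such $C$, so Theorem~\ref{thm:verygeneric} applies and delivers the length condition.

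I do not expect a genuine obstacle: the argument is a straightforward assembly of results already proved. The only minor point to be careful about is the numerical translation between \emph{``smooth at the point''} in Theorem~\ref{thm:verygeneric} and \emph{``elliptic curve with $(C^2)\equ 0$''} in the corollary, but this is handled verbatim by the Hodge-index plus adjunction argument already used for Theorem~\ref{thm:np}.
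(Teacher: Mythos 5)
Your proposal is correct and follows essentially the same path as the paper's own proof: set $p=1$, invoke Theorem~\ref{thm:verygeneric} (the paper cites Corollary~\ref{cor:very general} instead, but ultimately needs the length condition from Theorem~\ref{thm:verygeneric} to feed Theorem~\ref{thm:nopolygonabelian}), produce the global-multiplier-ideal divisor via Theorem~\ref{thm:nopolygonabelian}, and conclude with Proposition~\ref{prop:LPP Koszul}. Your write-up is if anything a little more careful than the paper's terse one in making explicit the Hodge-index plus adjunction argument that identifies the excluded curves as elliptic curves with $(C^2)=0$.
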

\begin{proof}
Our reasoning  is essentially the same as in the proof  of Theorem~\ref{thm:np}: take $p=1$ in Corollary~\ref{cor:very general}, and apply 
Theorem~\ref{thm:nopolygonabelian} to obtain the required divisor $F_0$. We conclude the proof with  Proposition~\ref{prop:LPP Koszul}. 
\end{proof}

\begin{remark}\label{rmk:Koszul}
 It is a classical result of Kempf \cite{Kempf} that $4L$ is Koszul for any ample line bundle $L$ on an abelian variety. As pointed out on \cite{LPP}*{p.2.},  the main interest for our result lies in the case when $L$ is not a (large) multiple of an ample line bundle, where for instance Kempf's theorem is not available. Furthermore, if $L^2\geq 3$ then Corollary~\ref{cor:Koszul} implies that $R(X,4L)$ is Koszul, thus recovering the result of Kempf.
 
 Regarding the bound of \cite{LPP}, our methods seem to imply a slightly weaker version. In reality looking carefully at the proof of Theorem~\ref{thm:verygeneric}, the results of \cite{LPP} are covered by \textit{Case}~$(1)$ there. The self-intersection $L^2$ was chosen to be large in order to force that only elliptic curves of small degree encode the syzygies of $L$ and the Koszul property of $R(X,L)$. It is only due to the large lower bound on $L^2$, that we don't seem to recover the bounds in \cite{LPP} straight from Corollary~\ref{cor:Koszul} or Theorem~\ref{thm:np}.

More importantly, note that our result confirms the Koszulness and the $(N_p)$ property of many ample line bundles with a small Seshadri constant and large self-intersection; 
the criterion we give is  easily checked in concrete examples.
\end{remark}

\begin{example}\label{eg:Koszul}
Here we present a class of  ample line bundles on a self-product of an elliptic curve where our criterion  verifies the Koszulness of the section ring, but \cite{LPP} does not.
For this, we will rely on computations from  \cite{BS}. 
 
In order to be in the situation of \cite{BS}*{Theorem 1}, let $C$ be an elliptic curve without complex multiplication, and let $L=a_1F_1+a_2F_2+a_3\Delta$ be an ample line
bundle on $C\times C$, where  the $F_i$'s  are  fibres of the two natural projections and $\Delta\subseteq C\times C$ stands for the class the diagonal. 
The self-intersection is then computed by 
\[
 (L^2) \equ 2a_1a_2 + 2a_1a_3+ 2a_2a_3 \ .
\]
We will  take $a_1,a_2,a_3>0$, hence  \cite{BS}*{Example 2.1} applies. Set $a_2=3$ and $a_3=2$, our plan is to take $a_1\gg a_2,a_3$; in any case if $a_1\geq 4$ then 
\[
 \epsilon(L;o) \equ \min\st{a_1+a_2,a_1+a_3,a_2+a_3} \equ 5\ ,
\]
and there is no elliptic curve of $L$-degree less than $5$ on $C\times C$. Our choice of $a_2$  and $a_3$ forces the line bundle  $L$ to be primitive, 
i.e. it is not a  multiple of any other line bundle on $C\times C$. Thus, neither \cite{Kempf} or \cite{LPP} apply. Moreover, if $a_1\geq 4$, then $(L^2)\geq 52\geq 45$, therefore $R(C\times C,L)$ is Koszul according to Corollary~\ref{cor:Koszul}.
\end{example}

\section{Low degree elliptic curves and syzygies}
 
In this section we give  the proof of the implication $(2)\Rightarrow (1)$ of Theorem~\ref{thm:np}.

\begin{theorem}\label{thm:counterexample}
Let $X$ be an abelian surface, $L$ a very ample line bundle on $X$. Suppose that $(L^2)\geq 4p+5$ and that there exists an elliptic curve $C\subseteq X$
with $1\leq (L\cdot C) \leq p+2$.  Then $L$ does not satisfy property $(N_p)$. 
\end{theorem}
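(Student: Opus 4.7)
The plan is to argue by contrapositive: assuming $L$ satisfies $(N_p)$, one knows from \cite{EGHP}*{Remark 3.9} (as recalled in the introduction) that $L$ is $(p+1)$-very ample, so the restriction map $H^0(X,L)\to H^0(Z,L|_Z)$ is surjective for every zero-dimensional subscheme $Z\subseteq X$ of length $p+2$. It therefore suffices to produce such a $Z$ for which this surjectivity fails. The natural candidate is to take $Z$ supported on the elliptic curve $C$, so that the restriction factors as
\[
H^0(X,L) \ \longrightarrow \ H^0(C,L|_C) \ \longrightarrow \ H^0(Z,L|_Z)\ ,
\]
and it is enough to bound the dimension of the image of the second arrow. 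Writing $d\deq (L\cdot C)$, Riemann--Roch on the elliptic curve $C$ gives $h^0(C,L|_C)\equ d$, so this image has dimension $d-h^0(C,L|_C(-Z))$, and our aim is to render it strictly less than $p+2$.

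I would then split into two cases depending on whether $d$ attains the upper bound. If $d<p+2$, any effective divisor $Z\subseteq C$ of degree $p+2$ works: indeed $\deg(L|_C-Z)\equ d-(p+2)<0$ forces $h^0(C,L|_C(-Z))\equ 0$, so the image has dimension $d<p+2$. If instead $d\equ p+2$, then a generic length-$(p+2)$ subscheme of $C$ actually produces a \emph{surjection} onto $H^0(Z,L|_Z)$, and one must instead select $Z$ inside the complete linear system $|L|_C|$, which by Riemann--Roch is a $\PP^{p+1}$. For any such $Z$ one has $L|_C(-Z)\cong\sO_C$, whence $h^0(C,L|_C(-Z))\equ 1$ and the image has dimension $p+1<p+2$. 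This boundary case $d\equ p+2$ is the main conceptual obstacle, and it is precisely here that the existence of an effective divisor linearly equivalent to $L|_C$ on $C$ is essential; naive point counts are not enough.

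In either case the composite $H^0(X,L)\to H^0(Z,L|_Z)$ fails to be surjective, contradicting $(p+1)$-very ampleness, so $L$ cannot satisfy $(N_p)$. The numerical hypothesis $(L^2)\geq 4p+5$ enters only to ensure $h^0(X,L)\equ (L^2)/2\geq p+2$, so that $(p+1)$-very ampleness is a meaningful condition on $L$. Very ampleness of $L$ on $X$ forces $L|_C$ to be very ample on the elliptic curve $C$, so $d\geq 3$; this makes the theorem vacuous for $p\equ 0$, while in all remaining cases $3\leq d\leq p+2$ is the effective range.
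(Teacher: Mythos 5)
Your proof is correct, but it takes a genuinely different route from the one in the paper. The paper argues directly from the minimal free resolution by the Gruson--Lazarsfeld--Peskine ``restricting syzygies'' method: assuming $(N_p)$, it identifies $\Lambda = \PP(H^0(C,L|_C))$ with a $(p+1)$-plane in $\PP^N$, establishes that $X\cap\Lambda = C$ scheme-theoretically (this is where the numerical hypothesis $(L^2)\geq 4p+5$ is really used, via bigness and base-point freeness of $|L-C|$), tensors the resolution of $\sI_{X|\PP^N}$ by $\sO_\Lambda$, and applies Lemma~\ref{lem:glp} to conclude $H^1(C,\sO_C)=0$, contradicting that $C$ is elliptic. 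You instead take the implication $(N_p)\Rightarrow (p{+}1)$-very ampleness (the EGHP fact already cited for Corollary~\ref{cor:two}) as a black box, and then give an elementary Riemann--Roch computation on $C$ showing that such an $L$ cannot be $(p{+}1)$-very ample; the subtlety you correctly isolate is the boundary case $(L\cdot C)=p+2$, where a generic length-$(p+2)$ scheme does not obstruct and one must choose $Z\in |L|_C|$ to make $h^0(C,L|_C(-Z))=1$.

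A few remarks on the comparison. Your approach is shorter and sidesteps the Zariski/Reider argument needed to verify $X\cap\Lambda=C$; in fact, as you note, the hypothesis $(L^2)\geq 4p+5$ is essentially unused in your argument. It also makes visible the precise geometric obstruction (a member of $|L|_C|$). On the other hand, the paper's proof is self-contained in the sense that it derives the failure of $(N_p)$ directly from the resolution rather than importing the EGHP $(N_p)\Rightarrow k$-very ampleness theorem wholesale; moreover, your argument is already implicit in the paper's own Corollary~\ref{cor:two}, whose direction $(2)\Rightarrow(1)$ is exactly the statement that a low-degree elliptic curve obstructs $(p{+}1)$-very ampleness, so the paper evidently regarded the restriction-of-syzygies route as the more informative one to write out. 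One small cautionary point: you should make sure the version of the EGHP result you invoke holds at the level of generality you use it (projectively normal embedding by a complete linear series), though that is guaranteed here since $(N_p)$ subsumes $(N_0)$ and $L$ is assumed very ample.
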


\begin{proof}[Proof of $(2)\Rightarrow (1)$ of Theorem~\ref{thm:np}]
Assume that $X$ contains an elliptic curve $C$ with $C^2=0$ and $1\leq (L.C)\leq p+2$. Then either $L$ is not very ample, in which case it cannot satisfy 
property $(N_p)$ for $p\geq 0$, or it is, but then Theorem~\ref{thm:counterexample} applies.
\end{proof}

\begin{remark}\label{rmk:long proof}
Property $(N_p)$ is predominantly studied via vector bundle techniques. This turns out to be a feasible approach in our case as well,  by making use of 
 syzygy bundles  (cf. \cite{L1}) and Lazarsfeld--Mukai bundles  (introduced simultaneously in \cite{L1} and \cite{M1}). An extensive cohomology computation 
exploiting  these vector  bundles on  $X$ and $C$ leads to a proof  that $L$ does not satisfy property $(N_p)$ under the given conditions, primarily 
since  the restricted line bundle $L|_C$  does not satisfy the same property on the elliptic curve $C$ either. 

Along the way, one also needs to rely on some ideas  from \cite{P} via the observation  that $L|_C$ is actually numerically equivalent to the $p+2$-th 
power of a line bundle on the curve $C$. For a recent overview on the techniques involving syzygy bundles and Lazarsfeld--Mukai   bundles and their 
manifold applications  the reader is kindly referred to Aprodu's surveys \cite{A1} and \cite{A2}.
\end{remark}

In order to give a much shorter proof of  Theorem~\ref{thm:counterexample}, we will follow a  path suggested by Rob Lazarsfeld via restricting syzygies and 
invoke ideas of Eisenbud--Green--Hulek--Popescu from \cite{EGHP}. The kernel of the proof is the same as in Remark~\ref{rmk:long proof}, namely that 
low-degree elliptic curves have bad syzygies. 

Assuming  by contradiction that there are good syzygies on $X$,  one can take a plane and restrict the resolution of the ideal sheaf 
of $X$ to the elliptic curve $C$. This way one obtains a complex that is exact  away from a set of dimension one; diagram chasing will lead to the 
desired contradiction. This strategy  was first  introduced  in \cite{GLP}, and further developed in  \cite{EGHP},

We start by recalling  the following lemma.

\begin{lemma}[\cite{GLP}*{Lemma 1.6}]\label{lem:glp}
Let
\[
E_{\bullet} \ : \ \ldots \rightarrow E_{r-1} \rightarrow \ldots \rightarrow E_1\rightarrow E_0 \stackrel{\epsilon}\longrightarrow F\rightarrow 0
\]
be a complex of coherent sheaves on a projective variety $V$ of dimension $r$, with $\epsilon$ surjective. Assume that 
\begin{enumerate}
\item $E_{\bullet}$ is exact away from a set of dimension $1$.
\item For a given integer $1\leq m\leq r$, one has 
\[
H^i(V, E_0) = \ldots = H^i(V, E_{r-m}) \ = \ 0 \ \textup{ for } i>0 \ .
\]
\end{enumerate}
Then $H^i(V, F)=0$ for $i\geq m$.
\end{lemma}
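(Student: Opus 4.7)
The plan is to chase cohomology through a sequence of short exact sequences carved out of the complex. Introduce the kernels $K_i \deq \ker(E_i \to E_{i-1})$ with the convention $E_{-1} \deq F$, and write $B_i \deq \operatorname{im}(E_{i+1} \to E_i) \subseteq K_i$. Since $\epsilon$ is surjective we have
\[
0 \to K_0 \to E_0 \to F \to 0,
\]
and for every $i \geq 0$ two further short exact sequences
\[
0 \to K_{i+1} \to E_{i+1} \to B_i \to 0, \qquad 0 \to B_i \to K_i \to \mathcal{H}_i(E_\bullet) \to 0,
\]
where $\mathcal{H}_i(E_\bullet)$ is the $i$-th homology sheaf of $E_\bullet$. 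By hypothesis $(1)$ each $\mathcal{H}_i(E_\bullet)$ is supported on a subset of dimension at most $1$, so $H^p(V,\mathcal{H}_i(E_\bullet)) = 0$ for every $p \geq 2$.

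Using the vanishing $H^i(V,E_0) = 0$ for $i > 0$ (available from $(2)$ since $m \leq r$), the first exact sequence identifies $H^i(V,F) \simeq H^{i+1}(V,K_0)$ for $i \geq 1$. It therefore suffices to show $H^j(V,K_0) = 0$ for every $j \geq m+1$. The core of the argument is then an inductive shift: the sequence $0 \to B_s \to K_s \to \mathcal{H}_s(E_\bullet) \to 0$ and the support condition on $\mathcal{H}_s(E_\bullet)$ yield a surjection $H^k(V,B_s) \twoheadrightarrow H^k(V,K_s)$ for all $k \geq 2$; and the sequence $0 \to K_{s+1} \to E_{s+1} \to B_s \to 0$, together with the vanishing $H^{>0}(V,E_{s+1}) = 0$ provided by $(2)$ whenever $s+1 \leq r-m$, gives an isomorphism $H^k(V,B_s) \simeq H^{k+1}(V,K_{s+1})$ for every $k \geq 1$. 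Composing these maps produces a surjection
\[
H^{k+1}(V,K_{s+1}) \ \twoheadrightarrow \ H^k(V,K_s), \qquad k \geq 2,\ 0 \leq s \leq r-m-1.
\]

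Iterating this shift $r-m$ times starting from $K_0$ assembles, for each $j \geq m+1$, a surjection
\[
H^{j+(r-m)}(V,K_{r-m}) \ \twoheadrightarrow \ H^j(V,K_0).
\]
Since $V$ is projective of dimension $r$ and $j+(r-m) \geq r+1$, Grothendieck vanishing forces the source, and hence the target, to vanish, as desired.

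The main point to watch is the simultaneous bookkeeping of two indices: at each step of the induction one must verify that the vanishing for $E_{s+1}$ is still furnished by hypothesis $(2)$, while also ensuring that enough shifts remain available to push the cohomological degree strictly above $\dim V$. These two constraints dovetail precisely — the induction runs for $s = 0, 1, \ldots, r-m-1$ and is terminated by Grothendieck vanishing at $s = r-m$. Once the indexing is arranged, the proof reduces to a routine diagram chase through the associated long exact sequences.
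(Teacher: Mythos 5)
Your proof is correct. The paper simply cites this lemma to \cite{GLP} without reproducing an argument, so there is no in-text proof to compare against; your dimension-shifting chase is the standard (and essentially the cited) route: split the complex into short exact sequences via the kernels $K_s$ and images $B_s$, use that each $\mathcal{H}_s$ is supported in dimension at most one so that $H^{\geq 2}(V,\mathcal{H}_s)=0$, shift cohomological degree up one step at a time using $H^{>0}(V,E_{s+1})=0$ for $s+1\leq r-m$, and terminate with Grothendieck vanishing once the degree exceeds $r=\dim V$. The index bookkeeping checks out: $j\geq m+1\geq 2$ guarantees $H^{j+s}(V,\mathcal{H}_s)=0$ at every stage, the shifts run for $s=0,\dots,r-m-1$ exactly when hypothesis $(2)$ supplies the needed vanishing on $E_{s+1}$, and after $r-m$ shifts the degree $j+(r-m)\geq r+1$ forces the source to vanish; the boundary case $m=r$ requires no shift and follows directly from Grothendieck vanishing applied to $K_0$.
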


\begin{proof}[Proof of Theorem~\ref{thm:counterexample}]
The case of projective normality was already discussed in Corollary~\ref{cor:normality}, 
so we can suppose  that $p\geq 1$. Note that in Corollary~\ref{cor:normality} $(L^2)\geq 20$ was assumed, but in fact the argument for the direction $(2)\Rightarrow (1)$ works whenever 
$(L^2)\geq 5$. This way, we will get to a contradiction by assuming $L$ satisfies property $(N_p)$ on $X$. 

Observe that  $L$  is projectively normal in particular, hence  very ample, and so $L$ gives rise to an embedding $X\subseteq \PP^N=\PP(H^0(X,L))$. 
As  the embedding above  is defined by the complete linear  series $|L|$,  in particular $H^0(\PP^N,\sI_{X|\PP^N}(1))=0$, the image is non-degenerate. 

Second, we point out that the natural restriction map
\begin{equation}\label{eq:restriction}
H^0(X,L) \ \stackrel{\textup{restr.}}{\longrightarrow} \ H^0(C,L|_C)
\end{equation}
is surjective. For this,  $H^1(X, L-C)=0$ would suffice,  which follows from  Kawamata--Viehweg vanishing once we show that  $L-C$ is big and nef. 
Since $(L^2)>2(L\cdot C)$, by the condition in Theorem~\ref{thm:counterexample},  \cite{PAGI}*{Theorem 2.2.15} implies  that $L-C$ is big. Nefness then  follows since  
$X$ is an abelian surface, therefore  any effective divisor is nef. 

Next, being the restriction of a very ample line bundle,  $L|_C$ defines an embedding $C\subseteq \PP^{p+1}=\PP(H^0(C,L|_C))$. Because  the restriction map in 
$(\ref{eq:restriction})$ is surjective,  $\PP^{p+1}$ can be seen as a $p+1$-dimensional plane $\Lambda\subseteq \PP^N$. Here we can assume that $(L\cdot C)=p+2$ by the induction hypothesis. 

The embedding $X\subseteq \PP^N$ was  non-degenerate therefore  the scheme-theoretic intersection $X\cap \Lambda$ is a subset of dimension  one (recall that $p+1<N$). 

Furthermore, by analyzing the exact sequence 
\[
 0 \lra \HH{0}{X}{L-C} \lra \HH{0}{X}{L} \lra \HH{0}{C}{L-C|_C} \lra 0\ ,
\]
we see that ${\mathfrak b}(|L-C|) = (\sI_{X\cap\Lambda/X}\colon \sI_{C/X}) \subseteq \sO_X$, in particular, $X\cap \Lambda = C$ if and only if the complete linear series $|L-C|$ 
is base-point free. 

We have already dealt with the case $p=0$, hence we will for the moment assume $p\geq 1$. As shown in  \cite{GrP}*{Section 1.2}, $L-C$ is base-point free by Reider's theorem whenever it is 
big and nef with $(L-C)^2\geq 5$. This is implied by the conditions $p\geq1$, $(L^2)\geq 4p+5$, $(L\cdot C)\leq p+2$, and $(C^2)=0$.

The rest of the proof follows that of \cite{EGHP}*{Theorem 1.1}, with the difference  that there the plane is of dimension at most $p$ (and 
hence  one obtains strong statements about the regularity of the ideal sheaf $\sI_{X\cap\Lambda|\Lambda}$ of the embedding $X\cap \Lambda\subseteq \Lambda$), whereas 
in our case the plane is of larger dimension; in any case, by the same token we will obtain that $H^2(\PP^{p+1},\sI_{X\cap\Lambda|\Lambda})=0$, 
which will give us the required contradiction.

Recall that aiming at a contradiction we assumed that $L$ satisfies property $(N_p)$ for some $p\geq 1$. By definition, one has a resolution of the ideal sheaf
\[
E_{\bullet}\ : \ \ldots \rightarrow E_{p+1}\rightarrow E_{p}\rightarrow \ldots \rightarrow E_{2}\rightarrow E_{1}\rightarrow \sI_{X|\PP^N}\rightarrow 0 \ ,
\]
where $E_i=\oplus \sO_{\PP^N}(-i-1)$ for any $i=1,\ldots ,p$. We tensor this resolution by $\sO_{\Lambda}$ and obtain a complex
\[
E_{\bullet}|_{\Lambda} \ : \ \ldots \rightarrow E_{p+1}\otimes \sO_{\Lambda}\rightarrow \oplus \sO_{\Lambda}(-p-1)\rightarrow \ldots \rightarrow \oplus \sO_{\Lambda}(-3)\rightarrow \oplus \sO_{\Lambda}(-2)\rightarrow \sI_{X|\PP^N}\otimes \sO_{\Lambda}\rightarrow 0
\]
that is exact outside the intersection $\Lambda \cap X$, i.e. away from a set of dimension $1$. 

Applying directly Lemma~\ref{lem:glp} and the automatic vanishing we have of higher cohomology of line bundles on projective space yields
\[
H^m(\Lambda, \sI_{X|\PP^N}\otimes \sO_{\Lambda}) \ = \ 0, \textup{ for any } m\geq 2 \ .
\]
Consider now the short exact sequence
\[
0\rightarrow \sI_X\cap \sI_{\Lambda}/\sI_X\cdot\sI_{\Lambda} \ \rightarrow \ \sI_{X|\PP^N}\otimes \sO_{\Lambda} \ \rightarrow \ \sI_{X\cap\Lambda|\Lambda}\rightarrow 0 \ .
\]
The kernel $\sK\deq \sI_X\cap \sI_{\Lambda}/\sI_X\cdot\sI_{\Lambda}$ is supported on the one-dimensional scheme $X\cap\Lambda$, so $H^m(\Lambda, \sK)=0$ for any $m\geq 2$. 
We then deduce that 
\[
H^m(\Lambda, \sI_{X\cap\Lambda|\Lambda}) \ = \ 0, \textup{ for any } m\geq 2 \ .
\]
Under the assumption $p\geq 1$ the  short exact sequence
\[
0\rightarrow \sI_{X\cap\Lambda|\Lambda} \rightarrow \sO_{\Lambda}\rightarrow \sO_{X\cap \Lambda}\rightarrow 0
\]
gives  that $H^1(X\cap\Lambda, \sO_{X\cap\Lambda})=0$. In particular, one arrives at  $H^1(C, \sO_C)=0$, which contradicts the fact that $C$ is an elliptic curve.
\end{proof}

\section{On a conjecture of Gross and Popescu}

In their quest for understanding the moduli space of abelian surfaces with a polarization of type $(1,d)$, Gross and Popescu \cite{GrP} 
formulated  the following conjecture at the end of their article. 

\begin{conjecture}\label{conj:grosspopescu}
The homogeneous ideal of an embedded $(1,d)$-polarized abelian surface is generated by quadrics and cubics whenever  $d\geq 9$.
\end{conjecture}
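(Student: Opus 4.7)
The plan is to split the conjecture into two regimes and handle each via a different specialization of the machinery developed in this paper. For $d \geq 23$, the full conclusion (generation by quadrics alone) follows from the Reider-type Theorem~\ref{thm:np}. For $9 \leq d \leq 22$, a refinement is needed since the self-intersection bound in Theorem~\ref{thm:np} fails; the key observation is that ``generation by quadrics and cubics'' is strictly weaker than property $(N_1)$, so it should admit a proof under weaker numerical hypotheses.

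For the case $d \geq 23$, I would apply Theorem~\ref{thm:np} with $p = 1$: the condition $(L^2) = 2d \geq 46 > 45 = 5(p+2)^2$ is met. To verify the elliptic-curve hypothesis, I would rule out each of $(L\cdot C) = 1, 2, 3$ for a hypothetical elliptic curve $C \subseteq X$ with $(C^2) = 0$: the value $1$ is excluded by very ampleness of $L$ (via Reider), while the values $2$ and $3$ force $X$ to split as a product of elliptic curves with a specific decomposable polarization by Nakamaye's Lemma \cite{Nak}*{Lemma 2.6} combined with the Decomposition theorem \cite{BL}*{Theorem 4.3.1}; for a primitive $(1,d)$-polarization with $d \geq 9$ this is impossible. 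This gives property $(N_1)$, which is stronger than what the conjecture asks.

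For the intermediate range $9 \leq d \leq 22$, the plan is to adapt the Lazarsfeld--Pareschi--Popa strategy of Theorem~\ref{thm:LPP} to the weaker target. By a Green--Inamdar-type reformulation, generation of the ideal in degrees $\leq 3$ (assuming projective normality, which follows from Corollary~\ref{cor:normality} for $d \geq 10$) is implied by a cohomology vanishing of the form $H^1(X^{\times 3}, L^{\boxtimes 3} \otimes N \otimes \sJ) = 0$ for all nef $N$, where $\sJ$ is a multiplier ideal attached to the union of partial diagonals in $X^{\times 3}$ that is strictly larger than the ideal sheaf $\sI_\Sigma$ used for $(N_1)$. It should therefore suffice to produce an effective $\QQ$-divisor $F_0 \equiv \tfrac{1-c}{3}L$ whose multiplier ideal matches a weaker local condition at the origin, for instance coinciding with the second symbolic power $\sI_o^{(2)}$ rather than with $\sI_o$ itself. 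An adaptation of Theorem~\ref{thm:nopolygon}, in which the region $\Lambda$ is replaced by a more generous convex set reflecting this weaker singularity requirement, should lower the self-intersection threshold from $45$ all the way down to roughly $2d \geq 18$, covering the remaining range.

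The main obstacle will be carrying out this refinement rigorously: formulating the precise weaker convex-geometric criterion on $\Delta_{(E,z)}(\pi^*B)$, matching it with the correct weakened multiplier ideal along the pullback under the difference morphism $\delta\colon X^{\times 3} \to X^{\times 2}$, and controlling the multiplier ideal globally via a version of Theorem~\ref{thm:nopolygonabelian}. A complementary fallback for the smallest values $d = 9, 10, 11$ would be to invoke semicontinuity of the Koszul cohomology dimensions in flat families of polarized abelian surfaces: the locus on which generation by quadrics and cubics holds is open in the moduli space of $(1,d)$-polarized abelian surfaces, so it would suffice to verify the conclusion on one well-chosen point in each component, potentially via Gross and Popescu's own explicit theta-function equations from \cite{GrP}.
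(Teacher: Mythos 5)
Your argument for $d\geq 23$ contains a genuine error at its key step: you cannot rule out elliptic curves $C\subseteq X$ with $(C^2)=0$ and $(L\cdot C)=3$. Nakamaye's result \cite{Nak}*{Lemma 2.6} concerns only curves with $(L\cdot C)=1$ (it is the degree-one case that forces $X$ to split as a product), and very ampleness of $L$ only excludes $(L\cdot C)\leq 2$, since an elliptic curve cannot be embedded by a line bundle of degree less than $3$; nothing excludes degree exactly $3$, and such Seshadri-exceptional elliptic curves do occur on $(1,d)$-polarized abelian surfaces for arbitrarily large $d$. In that situation property $(N_1)$ actually \emph{fails} by Theorem~\ref{thm:counterexample} (the implication $(2)\Rightarrow(1)$ of Theorem~\ref{thm:np}), so the ideal is not generated by quadrics alone and your claimed conclusion is false. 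This exceptional case is precisely the content of Section~5 of the paper (see Remark~\ref{rem:grosspopescu}): one must show that quadrics \emph{and cubics} still suffice, which the paper does by reformulating the problem as a vanishing statement on $X\times X\times X$ (Lemmas~\ref{lem:pp} and \ref{lem:inamdar}), reducing it via the Lazarsfeld--Pareschi--Popa mechanism to producing an effective $\QQ$-divisor $D\equiv\frac{1+c}{3}L$ with $\sJ(X;D)=\sI_{X,o}$ globally (Proposition~\ref{prop:lpp}), and then constructing $D$ through a Newton--Okounkov polygon analysis of the Seshadri-exceptional curve, using Theorems~\ref{thm:nopolygon} and \ref{thm:nopolygonabelian}. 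Your proposal omits this entire branch, which is the actual mathematical content of the $d\geq 23$ case beyond Theorem~\ref{thm:np}.

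For the range $9\leq d\leq 22$ your text is a program, not a proof, and it overreaches what the paper establishes: the paper proves the conjecture only for $d\geq 23$ and leaves the smaller values open. The proposed weakening of the local condition is not formulated (note also that requiring $\sJ(X;F_0)=\sI_o^{(2)}$ would be a \emph{stronger} singularity condition, not a weaker one, since $\sI_o^{(2)}\subseteq\sI_o$), the matching of the weakened ideal with a pullback under the difference morphism is unverified, and no convex-geometric criterion or threshold computation is carried out. The fallback via openness in moduli plus checking one member using the explicit equations of \cite{GrP} could at best give the statement for a \emph{general} $(1,d)$-polarized surface, whereas the conjecture concerns every embedded $(1,d)$-polarized abelian surface; moreover even projective normality, which your reduction presupposes, is only guaranteed by Corollary~\ref{cor:normality} when $(L^2)=2d\geq 20$ and so already excludes $d=9$.
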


The goal of this section is to prove the conjecture under the slightly weaker numerical assumption  $d\geq 23$. 

\begin{theorem}\label{thm:grosspopescu}
Let $X$ be an abelian surface, $L$ a very ample line bundle  of type $(1,d)$ on $X$ for some $d\geq 23$. Then $L$ defines an embedding $X\subseteq \PP^N=\PP(H^0(X, L))$ whose ideal sheaf $\sI_{X|\PP^N}$ 
is globally generated by quadrics and cubics.
\end{theorem}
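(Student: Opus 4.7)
The plan is to split into two cases depending on the existence of elliptic curves of $L$-degree exactly three, leveraging Theorem~\ref{thm:np} in the generic case. First I would observe that since $L$ is very ample and every very ample line bundle on an elliptic curve has degree at least $3$, one must have $(L\cdot C)\geq 3$ for every elliptic curve $C\subseteq X$. Combined with $(L^2)=2d\geq 46\geq 20$, Corollary~\ref{cor:normality} immediately yields that $L$ defines a projectively normal embedding. Moreover, if $X$ contains no elliptic curve $C$ with $(C^2)=0$ and $(L\cdot C)=3$, then Theorem~\ref{thm:np} with $p=1$ applies, since $(L^2)\geq 46\geq 45 = 5(p+2)^2$, and $L$ satisfies property $(N_1)$. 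In particular $\sI_{X|\PP^N}$ is then generated by quadrics, which is strictly stronger than the desired conclusion.

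The substantive case is when $X$ contains elliptic curves $C_1,\dots,C_s$ with $(C_i^2)=0$ and $(L\cdot C_i)=3$. Each $C_i$ embeds via $\phi_L$ as a smooth plane cubic in a $2$-dimensional plane $\Lambda_i\subseteq \PP^N$: the restriction map $H^0(X,L)\to H^0(C_i,L|_{C_i})$ is surjective by Kawamata--Viehweg applied to $L-C_i$, which is big by $(L^2)>2(L\cdot C_i)$ and nef since $X$ is abelian, exactly as in the proof of Theorem~\ref{thm:counterexample}. The defining cubic of the plane cubic $C_i\subseteq \Lambda_i$ lifts to a cubic element of $H^0(\PP^N,\sI_{X|\PP^N}(3))$; the task is to show that such cubics, together with all global sections of $\sI_{X|\PP^N}(2)$, globally generate $\sI_{X|\PP^N}$.

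I would adapt the LPP machinery summarised in Theorem~\ref{thm:LPP} to this weaker target. Property $(N_1)$ for $L$ is encoded by Green--Inamdar's vanishing of $H^i(X^{\times 3}, L^{\boxtimes 3}\otimes N\otimes \sI_{\Sigma})$ for all $i>0$ and all nef $N$; the analogous cohomological statement for ``$\sI_{X|\PP^N}$ globally generated by quadrics and cubics'' is a strictly weaker vanishing, in which $\sI_{\Sigma}$ is replaced by a larger ideal sheaf incorporating the product-like loci associated to the $C_i$. To achieve this via Nadel vanishing applied to the difference morphism, it suffices to produce an effective $\QQ$-divisor $D\equiv \tfrac{1-c}{3}L$ whose multiplier ideal agrees with $\sI_{X,o}$ off $\bigcup_i C_i$, with a controlled error along each $C_i$ precisely matching the cubic generators one is allowed to use.

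The main obstacle is constructing this modified singular divisor. Applying Theorem~\ref{thm:nopolygonabelian} directly to $B=L/3$ fails here: in the presence of an elliptic curve $C_i$ with $(L\cdot C_i)=3$, the generic infinitesimal Newton--Okounkov polygon $\Delta_o(B)$ falls into case~(2.b) of Proposition~\ref{prop:genericinf} with parameters $r=3$, $q=1$, so $\epsilon(B;o)\leq 1$ and the length condition~(\ref{eq:nopolygonabelian}) needed in Theorem~\ref{thm:nopolygonabelian} is violated. The plan for overcoming this is to subtract a small $\QQ$-multiple of each $C_i$ from the intended divisor, enlarging $\sJ(X;D)$ along $C_i$ by a controlled amount whose cohomological contribution is exactly what is tolerated by permitting cubic, but not higher, ideal generators. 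Verifying that this modification is compatible with all $s$ elliptic curves simultaneously, and that the resulting weaker vanishing on $X^{\times 3}$ genuinely forces generation by quadrics and cubics, is the key technical point.
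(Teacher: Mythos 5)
Your initial reduction is correct and matches the paper: you rightly observe that very ampleness of $L$ forces $(L\cdot C)\geq 3$ for every elliptic curve $C$, that $(L^2)\geq 46$ puts you in the regime of Theorem~\ref{thm:np} with $p=1$, and that property $(N_1)$ settles the question unless there is a Seshadri-exceptional elliptic curve $C$ with $(L\cdot C)=3$. You also correctly diagnose the obstruction in the exceptional case: with $\epsilon(B;o)=1$ for $B=\tfrac{1}{3}L$, the length condition~(\ref{eq:nopolygonabelian}) in Theorem~\ref{thm:nopolygonabelian} is only met with equality, so the standard LPP construction of a divisor $D\equiv(1-c)B$ with $\sJ(X;D)=\sI_o$ does not go through.

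However, your proposed remedy goes in a different and ultimately unworkable direction. You suggest replacing the target ideal sheaf $\sI_{\Sigma}$ on $X^{\times 3}$ by a larger one and constructing a divisor $D\equiv(1-c)B$ whose multiplier ideal agrees with $\sI_{X,o}$ only away from $\bigcup_i C_i$, with a ``controlled error'' along the $C_i$. This is vague where it matters most, and the intended mechanism (``subtract a small $\QQ$-multiple of each $C_i$'') destroys the numerics: $D-\delta C_i$ is no longer numerically proportional to $L$, so the Nadel vanishing argument on $X^{\times 3}$ does not apply. More fundamentally, you are trying to solve a positivity deficit by making $D$ \emph{less} positive along $C_i$, which can only enlarge $\sJ(X;D)$ and make the required equality harder to achieve. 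The paper's route is the opposite and much cleaner: following Inamdar (Lemma~\ref{lem:inamdar}), ``generated by quadrics and cubics'' is encoded by the vanishing of $H^1(X^{\times 3}, \sI_{\Sigma}\otimes L^{\otimes h}\boxtimes L\boxtimes L)$ for all $h\geq 2$, where the \emph{extra} factor of $L$ (compared to $N_1$, which needs $h\geq 1$) supplies surplus positivity. Proposition~\ref{prop:lpp} shows this surplus lets one work with an effective $D\equiv\tfrac{1+c}{3}L$ rather than $\tfrac{1-c}{3}L$. Scaling $B$ up by $(1+c)$ moves the boundary point $(2,1)$ of $\Delta_o(B)$ strictly into the interior, so condition~(\ref{eq:nopolygon1}) and hence Theorem~\ref{thm:nopolygon} apply, and a short argument (checking that only $\overline{C}$ appears in the negative parts for $t\in[1,2)$) upgrades the local statement to a global one via Theorem~\ref{thm:nopolygonabelian}. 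In short: the fix is to enlarge the divisor class by $c$, not to modify the ideal sheaf or subtract the exceptional curves. As written, your exceptional-case construction has a genuine gap that the paper's scaling trick resolves.
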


\begin{remark}\label{rem:grosspopescu}
The condition  $d\geq 23$ yields  $(L^2)\geq 46$; as $L$ was chosen to be very ample, we have  $(L \cdot C)\geq 3$ for any elliptic curve $C\subseteq X$, for   the restriction $L|_C$ is not very ample 
when $1\leq (L\cdot C)\leq 2$. Applying  Theorem~\ref{thm:np},  $(L^2)\geq 46$ implies  that $L$ satisfies property $(N_1)$, i.e. the ideal sheaf $\sI_{X|\PP^N}$ is generated by quadrics 
with the exception when there exists a Seshadri exceptional  elliptic curve $C\subseteq X$ with $(L\cdot C)=3$. 

If there exists an elliptic curve $C\subseteq X$ with $(L\cdot C)=3$, then  the restricted line bundle $L|_C$ embedds the curve $C\subseteq\PP^2$ as the zero locus of a degree $3$ polynomial. 
Thus, Theorem~\ref{thm:np} not only explains the need for cubic generators of the ideal sheaf asked in Conjecture~\ref{conj:grosspopescu}, 
but it also exposes the exceptions when these cubic generators are indeed needed.

With this said, in order to prove Theorem~\ref{thm:grosspopescu}, all that remains is to tackle the exceptional case along  the same path as taken for Theorem~\ref{thm:np}. More specifically, we will translate the 
condition of being  generated by quadrics and cubics to the existence of an effective divisor closely related to $L$, whose multiplier ideal is globally the maximal ideal of the origin,
and use the theory of infinitesimal Newton--Okounkov polygons to find such a divisor.  
\end{remark}

At first we will transcribe  the condition that the ideal sheaf is generated by quadrics and cubics into the vanishing of cohomology of some vector bundle on $X$ following ideas of Pareschi. To this end,  suppose that $L$ 
is a very ample line bundle on $X$ and denote by $M_L$ the kernel of the evaluation map
\[
0\rightarrow M_L\rightarrow H^0(X,L)\otimes \sO_X\rightarrow L\rightarrow 0\ .
\]

\begin{lemma}[\cite{PP}*{Section 6}]\label{lem:pp}
With notation as above, assume that
\[
H^1(X, M_L^{\otimes 2}\otimes L^{\otimes h}) \equ  0, \textup{ for all } h\geq 2 \ .
\]
Then the ideal sheaf arising from the embedding $X\subseteq \PP(H^0(X,L))$ is globally generated by quadrics and cubics.
\end{lemma}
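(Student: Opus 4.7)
The plan is in two movements: first, translate the given cohomological vanishing into a surjectivity statement about multiplication maps on global sections; second, use that surjectivity to show that the graded ideal of $X$ in $\PP^N$ is generated in degrees two and three, which yields global generation of $\sI_{X|\PP^N}$ by quadrics and cubics.

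For the first step, I would tensor the defining sequence $0 \to M_L \to V \otimes \sO_X \to L \to 0$ (with $V \deq H^0(X,L)$) by $M_L \otimes L^h$, producing
\[
0 \lra M_L^{\otimes 2} \otimes L^h \lra V \otimes M_L \otimes L^h \lra M_L \otimes L^{h+1} \lra 0.
\]
The long exact sequence of cohomology together with the assumption $H^1(X, M_L^{\otimes 2} \otimes L^h) = 0$ for $h \geq 2$ yields surjectivity of the multiplication map $V \otimes H^0(M_L \otimes L^h) \twoheadrightarrow H^0(M_L \otimes L^{h+1})$ for every $h \geq 2$. Projective normality of $L$, which is needed below, would be secured along the way: since $H^i(L^k) = 0$ for $i \geq 1$, $k \geq 1$ on the abelian surface $X$, the long exact sequence attached to $0 \to M_L \otimes L^k \to V \otimes L^k \to L^{k+1} \to 0$ identifies $H^1(M_L \otimes L^k)$ with the cokernel of the multiplication $V \otimes H^0(L^k) \to H^0(L^{k+1})$, and a descending induction fed by the vanishing hypothesis forces this cokernel to vanish for every $k \geq 1$.

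For the second step, write $I_n \subseteq \Sym^n V$ for the degree-$n$ piece of the graded ideal of $X$. Under projective normality $H^0(L^n) = \Sym^n V / I_n$, and $H^0(M_L \otimes L^h)$ is precisely the kernel of the multiplication $V \otimes H^0(L^h) \to H^0(L^{h+1})$. I would then analyze the commutative diagram with exact rows
\[
\begin{array}{ccccc}
V \otimes I_h & \hookrightarrow & V \otimes \Sym^h V & \twoheadrightarrow & V \otimes H^0(L^h) \\
\downarrow \mu_{h+1} & & \downarrow \psi_{h+1} & & \downarrow \varphi_{h+1} \\
I_{h+1} & \hookrightarrow & \Sym^{h+1} V & \twoheadrightarrow & H^0(L^{h+1})
\end{array}
\]
via the snake lemma. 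Surjectivity of $\mu_{h+1}$ for $h \geq 3$ reduces to surjectivity of the induced map $\ker \psi_{h+1} \to \ker \varphi_{h+1} = H^0(M_L \otimes L^h)$. Since the Koszul resolution presents $\ker \psi_{h+1}$ as the image of $\wedge^2 V \otimes \Sym^{h-1}V$, which surjects onto $\wedge^2 V \otimes H^0(L^{h-1})$, combining with the step-one surjectivities applied inductively delivers the required surjection. It follows that $I_n = V \cdot I_{n-1}$ for every $n \geq 4$, so the graded ideal is generated in degrees at most three; equivalently, $\sI_{X|\PP^N}$ is globally generated by quadrics and cubics.

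The main obstacle I expect is the snake-lemma diagram chase of the second step, and in particular the reduction of the surjection $\ker \psi_{h+1} \twoheadrightarrow H^0(M_L \otimes L^h)$ to the step-one surjectivities via the Koszul presentation of $\ker \psi_{h+1}$. This is the standard principle from syzygy theory relating the cohomology of $M_L^{\otimes (p+1)}$ to the $p$-th condition of Green, here instantiated in its mildest case $p = 1$, and is essentially the content of \cite{PP}*{Section 6}.
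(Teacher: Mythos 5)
The paper does not prove this lemma; it is cited wholesale from Pareschi--Popa, Section 6, so I'll evaluate your argument on its own merits. Your two-step plan is the right skeleton, but there are two genuine gaps.

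First, your claim that projective normality ``would be secured along the way'' by a descending induction fed by the vanishing hypothesis does not close. To control $H^1(M_L\otimes L)$ — which is the only piece of projective normality not automatic from Koizumi/Mumford on an abelian surface — the relevant short exact sequence is $0\to M_L^{\otimes 2}\otimes L\to V\otimes M_L\otimes L\to M_L\otimes L^{2}\to 0$, whose long exact sequence would require $H^1(M_L^{\otimes 2}\otimes L)=0$, i.e.\ the case $h=1$, precisely the case excluded by the hypothesis. This is not an oversight you can wave away: the paper explicitly supplies projective normality as an independent input via Theorem~\ref{thm:np} (see Remark~\ref{rem:56} and Remark~\ref{rem:grosspopescu}), and Lemma~\ref{lem:inamdar} accordingly lists \emph{two} vanishing conditions (the first of which encodes $N_0$). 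You should either take projective normality as an additional hypothesis, as the ambient setting warrants, or reference the external source supplying it; it does not follow from the $M_L^{\otimes 2}$ vanishings alone.

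Second, the reduction in step two is missing its key cohomological ingredient. After the snake lemma the task is to show that $\wedge^2 V\otimes H^0(L^{n-2})\twoheadrightarrow H^0(M_L\otimes L^{n-1})$ for $n\geq 4$, and this is governed not by the ``step-one surjectivities'' $V\otimes H^0(M_L\otimes L^h)\twoheadrightarrow H^0(M_L\otimes L^{h+1})$ but by the Koszul-type short exact sequence $0\to\wedge^2 M_L\otimes L^{h}\to\wedge^2V\otimes L^{h}\to M_L\otimes L^{h+1}\to 0$ together with the characteristic-zero splitting $M_L^{\otimes 2}\cong\wedge^2 M_L\oplus\Sym^2 M_L$, which converts the hypothesis into $H^1(\wedge^2 M_L\otimes L^{h})=0$ for $h\geq 2$ and yields the desired surjectivity directly. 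Your proposed upward induction via the commuting square still needs a base case $\wedge^2V\otimes H^0(L^2)\twoheadrightarrow H^0(M_L\otimes L^3)$, and that base case \emph{is} exactly an application of the $\wedge^2 M_L$ sequence; once you invoke it there, the induction is superfluous because the same argument handles every $h\geq 3$ at once. Nowhere in your sketch do $\wedge^2 M_L$ or the splitting of $M_L^{\otimes 2}$ appear, so the chain of implications as written does not terminate. With these two repairs — assume (or cite) projective normality, and insert the $\wedge^2 M_L$ sequence plus the splitting — the argument becomes the standard one behind the cited lemma.
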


Following Inamdar's work \cite{I}, let $\Delta_X\subseteq X\times X$ be the diagonal, and   consider the reduced algebraic set 
\[
\Sigma \deq  \{(x_0,x_1,x_2)| x_0=x_1 \textup{ and } x_0=x_2\} \ = \ \Delta_{0,1}\cup \Delta_{0,2} 
\]
on $X\times X\times X$. By  Lemma~\ref{lem:pp} and \cite{I}*{Lemma 1.1 and Lemma 1.3} we obtain the statement below. 

\begin{lemma}\label{lem:inamdar}
Let $h\geq 2$ be a natural number, and suppose that for any $h\geq 2$ we have 
\[
H^1(X\times X, \sI_{\Delta_X}\otimes L\boxtimes L^{\otimes h}) \ = \ H^1(X\times X\times X, \sI_{\Sigma}\otimes L^{\otimes h} \boxtimes L\boxtimes L) \equ   0 \ . 
\]
Then the ideal sheaf associated to the embedding $X\subseteq \PP(H^0(X,L))$ is globally generated by quadrics and cubics.
\end{lemma}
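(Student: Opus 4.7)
The plan is to translate the two hypothesized product cohomology vanishings into the single vanishing $H^1(X, M_L^{\otimes 2}\otimes L^{\otimes h})=0$ for all $h\geq 2$, after which Lemma~\ref{lem:pp} delivers the conclusion immediately.

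The first step is to realize the syzygy bundle as a direct image. Tensoring the structure sequence of the diagonal $\Delta_X\subseteq X\times X$ with $\pr_2^*L$ and pushing forward by $\pr_1$ yields $M_L=\pr_{1*}(\sI_{\Delta_X}\otimes \pr_2^*L)$, while the higher direct image $R^1\pr_{1*}(\sI_{\Delta_X}\otimes \pr_2^*L)$ vanishes thanks to $H^{\geq 1}(X,L)=0$ for $L$ ample on the abelian surface $X$. The Leray spectral sequence then gives
\[
H^i(X\times X,\sI_{\Delta_X}\otimes L\boxtimes L^{\otimes h})\ =\ H^i(X,M_L\otimes L^{\otimes h})
\]
for all $i\geq 0$, so the first hypothesis is equivalent to $H^1(X,M_L\otimes L^{\otimes h})=0$ for $h\geq 2$.

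The heart of the argument is to establish the analogous identification on the triple product $X\times X\times X$, namely $\pr_{1*}(\sI_\Sigma\otimes \pr_2^*L\otimes \pr_3^*L)=M_L^{\otimes 2}$, with $R^{\geq 1}\pr_{1*}=0$. Over a point $x\in X$, the diagonals $\Delta_{0,1}$ and $\Delta_{0,2}$ cut out the transverse union $(\{x\}\times X)\cup(X\times\{x\})$ inside the fiber $\{x\}\times X\times X\cong X\times X$, and by transversality its ideal is the box product $\sI_x\boxtimes\sI_x$. Combining the Mayer--Vietoris sequence for $\sI_\Sigma=\sI_{\Delta_{0,1}}\cap\sI_{\Delta_{0,2}}$ with the K\"unneth formula and cohomology and base change identifies the fiber of $\pr_{1*}(\sI_\Sigma\otimes \pr_2^*L\otimes \pr_3^*L)$ at $x$ with $H^0(X,\sI_x\otimes L)^{\otimes 2}=(M_L^{\otimes 2})_x$; the vanishing of the higher cohomology of $L$ on $X$ then forces $R^i\pr_{1*}$ to vanish for $i\geq 1$. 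Leray applied once more, together with the projection formula, translates the second hypothesis into $H^1(X,M_L^{\otimes 2}\otimes L^{\otimes h})=0$ for $h\geq 2$, and Lemma~\ref{lem:pp} closes the argument.

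The main obstacle is the pushforward computation in the previous paragraph, in particular the verification that all the higher direct images of $\sI_\Sigma\otimes L^{\otimes h}\boxtimes L\boxtimes L$ along $\pr_1$ vanish. One needs to track contributions from both $\Delta_{0,1}$ and $\Delta_{0,2}$ and from their common small-diagonal intersection, systematically using Kodaira-type vanishing on the abelian surface; this is essentially the content of \cite{I}*{Lemmas 1.1 and 1.3}.
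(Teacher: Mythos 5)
Your proposal is correct and follows essentially the same route the paper does: the paper simply cites \cite{I}*{Lemmas 1.1 and 1.3} to translate the product vanishings into $H^1(X,M_L^{\otimes 2}\otimes L^{\otimes h})=0$ and then invokes Lemma~\ref{lem:pp}, while you re-derive the content of Inamdar's lemmas via pushforward, Leray, K\"unneth and flat base change. The transversality remark ($\sI_{\{x\}\times X}\cap\sI_{X\times\{x\}}=\sI_{\{x\}\times X}\cdot\sI_{X\times\{x\}}=\sI_x\boxtimes\sI_x$), the identification of the fiber of $\pr_{1*}(\sI_\Sigma\otimes\pr_2^*L\otimes\pr_3^*L)$ with $H^0(X,\sI_xL)^{\otimes 2}=(M_L^{\otimes 2})_x$, and the vanishing of $R^{\geq 1}\pr_{1*}$ via $H^1(X,\sI_xL)=0$ all go through as you indicate.

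Two small comments. First, in your warm-up on the double product you write that pushing $\sI_{\Delta_X}\otimes L\boxtimes L^{\otimes h}$ forward by $\pr_1$ gives $M_L\otimes L^{\otimes h}$; pushing by $\pr_1$ actually produces $L\otimes M_{L^{\otimes h}}$ by the projection formula. You want $\pr_2$ here (or the swap symmetry of $\Delta_X$), whereas on the triple product $\pr_1$ is indeed the right projection since there the twist $L^{\otimes h}$ sits on the first factor. Second, as your computation shows, the triple-product hypothesis alone already yields $H^1(X,M_L^{\otimes 2}\otimes L^{\otimes h})=0$ for $h\geq 2$, so the double-product hypothesis is not used as a cohomological input; it enters only as the projective-normality condition which, as Remark~\ref{rem:56} notes, is what guarantees the setting (a complete, projectively normal embedding) in which Lemma~\ref{lem:pp} is applied. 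Neither point is a gap, but they are worth flagging for the sake of a clean write-up.
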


\begin{remark}\label{rem:56}
The vanishing of the first cohomology group in  the statement of Lemma~\ref{lem:inamdar} is already satisfied by $L$ if it is taken as 
in the hypothesis of Theorem~\ref{thm:grosspopescu}, since  the vanishing of the aforementioned cohomology group is equivalent to asking 
that $L$ defines a projectively normal embedding. But this happens  for the very ample line bundle $L$ in  
Theorem~\ref{thm:grosspopescu} according to   Theorem~\ref{thm:np}.
\end{remark}

Before we present the proof of Theorem~\ref{thm:grosspopescu},  we show how to translate our problem to finding an effective divisor  
related to $L$ with prescribed  singularities and numerics. Here we follow the line of thought of   \cite{LPP}.
 
\begin{proposition}\label{prop:lpp}
Let $L$ be a very ample line bundle on an abelian surface $X$ defining a projectively normal embedding. 
Suppose that for some rational number $0<c\ll 1$ there exists an effective $\QQ$-divisor 
\[
D \ \equiv \ \frac{(1+c)}{3}L \ ,
\]
such  that $\sJ(X;D)=\sI_{X,0}$ over the whole of  $X$. Then the ideal sheaf of the embedding $X\subseteq \PP(H^0(X,L))$ is 
globally generated by quadrics and cubics.
\end{proposition}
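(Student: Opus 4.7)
The plan is to follow the Lazarsfeld--Pareschi--Popa template sketched in Section~3, adapted to the setting of Lemma~\ref{lem:inamdar}. Combining Lemma~\ref{lem:inamdar} with Remark~\ref{rem:56} (which already uses our hypothesis that $L$ defines a projectively normal embedding to dispose of the diagonal vanishing), the whole problem reduces to producing
\[
H^1\bigl(X^3,\ \sI_\Sigma\otimes L^{\otimes h}\boxtimes L\boxtimes L\bigr) \equ 0, \quad\text{for every } h\geq 2.
\]
I would obtain this by exhibiting $\sI_\Sigma$ as a global multiplier ideal and then applying Nadel vanishing.

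To this end, on $Y\deq X\times X$ put $E_0\deq \pr_1^*D+\pr_2^*D$, and on $X^3$ put $E\deq \delta^*E_0$, where $\delta\colon X^3\lra X^2$, $\delta(x_0,x_1,x_2)\equ(x_0-x_1,x_0-x_2)$, is the smooth difference morphism. Since by assumption $\sJ(X;D)\equ\sI_{X,0}$ globally on $X$, the pullback formula for multiplier ideals under smooth morphisms (\cite{PAGII}*{Example~9.5.45}) gives $\sJ(X^2;\pr_j^*D)\equ\sI_{\pr_j^{-1}(0)}$ for $j=1,2$. A Koll\'ar--Bertini perturbation of $D$ within its numerical class, exactly as in \cite{LPP}, combined with the transversality of $\pr_1^{-1}(0)$ and $\pr_2^{-1}(0)$, upgrades this to $\sJ(X^2;E_0)\equ \sI_\Lambda$ with $\Lambda\equ\pr_1^{-1}(0)\cup\pr_2^{-1}(0)$. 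A second application of the smooth pullback formula, now to $\delta$, then yields $\sJ(X^3;E)\equ\delta^{-1}\sI_\Lambda\equ\sI_\Sigma$, since $\Sigma\equ\delta^{-1}(\Lambda)$ scheme-theoretically.

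The heart of the argument is to check that $L^{\otimes h}\boxtimes L\boxtimes L-E$ is an ample $\QQ$-divisor on $X^3$ for $h\geq 2$ and $0<c\ll 1$. The theorem of the square on $X$ yields, at the level of numerical equivalence (valid for any class on an abelian variety since $[-1]^*$ acts trivially on $\mathrm{NS}(X)$), the identity $\sigma_{0j}^*L\equiv 2\pr_0^*L+2\pr_j^*L-\mu_{0j}^*L$ for $j\in\{1,2\}$, where $\sigma_{0j}, \mu_{0j}\colon X^3\to X$ denote respectively the difference and the sum maps in the coordinates $0$ and $j$. Substituting $E\equiv \tfrac{1+c}{3}(\sigma_{01}^*L+\sigma_{02}^*L)$ and collecting terms rewrites the twist as
\[
\Bigl(h-\tfrac{4(1+c)}{3}\Bigr)\pr_0^*L+\tfrac{1-2c}{3}\pr_1^*L+\tfrac{1-2c}{3}\pr_2^*L+\tfrac{1+c}{3}\mu_{01}^*L+\tfrac{1+c}{3}\mu_{02}^*L,
\]
which for $h\geq 2$ and $c<1/2$ is a combination of the ample class $\pr_0^*L+\pr_1^*L+\pr_2^*L$ with strictly positive coefficients and of the nef classes $\mu_{0j}^*L$ with positive coefficients, hence ample. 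Nadel vanishing then delivers the required $H^1$-vanishing and Lemma~\ref{lem:inamdar} concludes the proof.

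The step I expect to be most delicate is the scheme-theoretic identity $\sJ(X^3;E)\equ\sI_\Sigma$: the pullback formula for smooth morphisms is textbook, but turning subadditivity into the equality $\sJ(X^2;E_0)\equ\sI_\Lambda$ demands care, as it rests on the transversal disposition of the two coordinate subschemes together with a sufficiently generic choice of $D$ in its numerical class (Koll\'ar--Bertini). Once this identity is in place, the ampleness computation is a direct calculation using the theorem of the square, and the proof closes with Nadel vanishing.
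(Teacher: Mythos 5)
Your proof is correct and follows the same overall architecture as the paper's: reduce to the vanishing of $H^1\bigl(X^3,\sI_\Sigma\otimes L^{\otimes h}\boxtimes L\boxtimes L\bigr)$ via Lemma~\ref{lem:inamdar} and Remark~\ref{rem:56}, realize $\sI_\Sigma$ as a multiplier ideal by pulling $D$ back through the projections and the difference morphism $\delta$ following \cite{LPP}, and then apply Nadel vanishing once the twisting class is shown to be ample. The one place where you genuinely diverge is the ampleness verification. The paper takes the LPP route: it rewrites $L^{\otimes 2}\boxtimes L\boxtimes L-\tfrac{1+c}{3}\delta^*(L\boxtimes L)$ as $\pi_1^*L+\tfrac13 N-\tfrac{c}{3}\delta^*(L\boxtimes L)$ with $N$ nef satisfying $\delta^*(L\boxtimes L)+N\equiv L^{\otimes 3}\boxtimes L^{\otimes 3}\boxtimes L^{\otimes 3}$, invokes \cite{LPP}*{Proposition 2.1} to express $\pi_1^*L+N=\pi_1^*L+b^*L+d_{12}^*L$, and concludes by observing this is the pullback of $L\boxtimes L\boxtimes L$ under the finite map $\pi_1\times b\times d_{12}$. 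You instead apply the numerical theorem of the square directly to $\sigma_{0j}=\pr_0-\pr_j$ to obtain $\sigma_{0j}^*L\equiv 2\pr_0^*L+2\pr_j^*L-\mu_{0j}^*L$, substitute, and read off ampleness from an explicit positive decomposition into $L\boxtimes L\boxtimes L$ plus nef summands $\pr_0^*L$ and $\mu_{0j}^*L$. Both are valid; your computation is more self-contained (it does not lean on \cite{LPP}*{Proposition 2.1}), while the paper's is shorter because it reuses the LPP machinery verbatim. In either case the step you flag as delicate --- the scheme-theoretic equality $\sJ(X^2;E_0)=\sI_\Lambda$ before pulling back along $\delta$ --- is likewise deferred to \cite{LPP} in the paper, so you have identified the same pressure point.
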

\begin{proof}
Continuing the argument of  Remark~\ref{rem:56}, it suffices  to show the vanishing of 
$H^1(X^3, \sI_{\Sigma}\otimes L^{\otimes h} \boxtimes L\boxtimes L)$ for all $h\geq 2$.  We will do this with the help of 
Nadel vanishing; for this to work  we will need   to find an effective divisor $E$ on $X^3$ such that 
\[
\big(L^{\otimes 2}\boxtimes L\boxtimes L\big)\big(-E\big) \textup{   is ample} \ 
\]
and $\sJ(X^3, E)=\sI_{\Sigma}$.

We will construct the divisor $E$ from the divisor $D$ defined on $X$ for some $0<c\ll 1$, as explained in \cite{LPP}. 
So, let $\textup{pr}_i:X\times X\rightarrow X$ for $i=1,2$. Consider the map
\[
\delta : X\times X\times X\rightarrow X\times X, (x_0,x_1,x_2)\rightarrow (x_0-x_1,x_0-x_2) \ .
\]
Note that
\[
\Sigma \ = \ \delta^{-1}\Big(\textup{pr}_1^{-1}(0)\bigcup \textup{pr}_2^{-1}(0)\Big)
\]
scheme-theoretically; set $E\deq \delta^*\big(\textup{pr}_1^*(D)+\textup{pr}_2^*(D)\big)$. Still using  \cite{LPP},  
 we obtain  $\sJ(X^3,E)=\sI_{\Sigma}$.  In order for  Nadel vanishing to apply, it remains to show that
\[
L^{\otimes 2}\boxtimes L\boxtimes L \ - \ \frac{(1+c)}{3}\delta^*(L\boxtimes L) \textup{    is ample} \ 
\]
for any rational number $0<c\ll 1$, since 
\[
 L^{\otimes 2}\boxtimes L\boxtimes L \ - \ \frac{1+c}{3}\delta^*(L\boxtimes L) \equ \pi^*L + \frac13 N -\frac{c}{3}\delta^*(L\boxtimes L)\ .
\]
By  \cite{LPP} there exists a nef line bundle $N$ on $X^3$ having the property that 
\[
\delta^*\big(L\boxtimes L\big)\otimes N \equ L^{\otimes 3}\boxtimes L^{\otimes 3}\boxtimes L^{\otimes 3} \ ,
\]
hence it suffices to verify that $\pi_1^*(L)+N$ is indeed ample, where $\pi_1:X\times X\times X\rightarrow X$ stands for the first 
projection. 

Following  \cite{LPP}*{Proposition 2.1} we observe that
\[
\pi_1^*(L)+N \ = \ \pi_1^*(L) \ + \ b^*(L)\ + \ d_{12}^*(L) \ ,
\]
where $b, d_{12}: X\times X\times X\rightarrow X$ are the maps given by $b(x_0,x_1,x_2)=x_0+x_1+x_2$ and $d_{12}(x_0,x_1,x_2)=x_2-x_1$. 
Notice that the product map $\pi_1\times b\times d_{12}$ is a finite morphism, thus
\[
\pi_1^*(L)+N\equ \big(\pi_1\times b\times d_{12}\big)^*\big(L\boxtimes L\boxtimes L\big) \ ,
\]
hence $\pi_1^*(L)+N$ is ample as required. 
\end{proof}

\begin{proof}[Proof of Theorem~\ref{thm:grosspopescu}]
Based on the discussion in Remark~\ref{rem:grosspopescu} it suffices  to treat the 'exceptional' case, i.e. when 
there exists a Seshadri exceptional elliptic curve $C\subseteq X$ with $(L\cdot C)=3$. Assume that such a curve does indeed exist. 

Write   $B\deq \frac{1}{3}L$ (which is an ample $\QQ$-divisor class) and let $\pi: X'\rightarrow X$ be the blow-up of the origin with $E$ 
the exceptional divisor, and $\overline{C}$ the proper transform of $C$ with respect to  $\pi$. Note that  $\epsilon(B;0)=(B\cdot C)=1$
under the circumstances. 

First observe that the negative parts of $\pi^*(B)-tE$ cannot contain negative curves beside $\overline{C}$  for all $t\in [1,2)$. 
Otherwise, the same argument  as in the proof of Theorem~\ref{thm:verygeneric} \textit{Case 3.(b)} will force the area of the generic 
infinitesimal Newton--Okounkov polygon $\Delta_x(B)$ to be strictly less than $5/2$, contradicting the fact that  the area of this polygon 
in fact equals  $(B^2)\geq 5/2$.

Having established that the  curve $\overline{C}$ is the only one appearing in the negative part of $\pi^*(B)-tE$ for any $t\in [1,2)$, we 
know that 
\[
\pi^*(B)-2E \ = \ P_2 \ + \ \overline{C} 
\]
is  the Zariski decomposition of $\pi^*(B)-2E$ for a suitable nef divisor $P_2$. Next, \cite{LM}*{Theorem 6.4} implies  
that $(2,1)\in\Delta_{(E,z)}(\pi^*(B))$ for any point $z\in E$. Choosing any rational number $c>0$, this leads to 
\[
\textup{interior of }\big(\Delta_{(E,z)}(\pi^*(1+c)B) \ \bigcap \ \Lambda\big) \ \neq \ \varnothing, \forall z\in E\ .
\]
Thus, the divisor $(1+c)B$ satisfies condition~$(\ref{eq:nopolygon1})$, and  Theorem~\ref{thm:nopolygon} yields the existence of an 
effective $\QQ$-divisor $D\equiv (1+c)B$ with $\sJ(X;D)=\sI_{0}$ locally around the origin. Since  the length of the segment at $t=2$ 
is greater than  one, Theorem~\ref{thm:nopolygonabelian} shows that the equality holds over the whole of $X$. 
By Proposition~\ref{prop:lpp} we are done.
\end{proof}


\end{document}